\date{11 June 2013}
\title{Cohomologically Cofinite Complexes}
\author{Marco Porta, Liran Shaul and Amnon Yekutieli}
\address{Porta: Department of  Mathematics,
Ben Gurion University, Be'er Sheva 84105, Israel}
\email{marcoporta1@libero.it}
\address{Shaul: Department of Mathematics,
The Weizmann Institute of Science, Rehovot 76100, Israel}
\email{liran.shaul@weizmann.ac.il}
\address{Yekutieli: Department of  Mathematics,
Ben Gurion University, Be'er Sheva 84105, Israel}
\email{amyekut@math.bgu.ac.il}
\thanks{{\em Mathematics Subject Classification} 2010.
Primary: 13D07; Secondary: 13B35, 13C12, 13D09, 18E30.}
\keywords{adic completion, torsion, derived functors.}
\thanks{This research was supported by the Israel Science Foundation and the
Center for Advanced Studies at BGU}
\newtheorem{thm}[equation]{Theorem}
\newtheorem{cor}[equation]{Corollary}
\newtheorem{prop}[equation]{Proposition}
\newtheorem{lem}[equation]{Lemma}
\theoremstyle{definition}
\newtheorem{dfn}[equation]{Definition}
\newtheorem{rem}[equation]{Remark}
\newtheorem{exa}[equation]{Example}
\newtheorem{setup}[equation]{Setup}
\numberwithin{equation}{section}
\newcommand{\xar}{\xrightarrow}
\newcommand{\opn}{\operatorname}
\newcommand{\cat}[1]{\operatorname{\mathsf{#1}}}
\newcommand{\rmitem}[1]{\item[\text{\textup{(#1)}}]}
\newcommand{\mfrak}[1]{\mathfrak{#1}}
\newcommand{\mrm}[1]{\mathrm{#1}}
\newcommand{\mbb}[1]{\mathbb{#1}}
\newcommand{\tup}[1]{\textup{#1}}
\newcommand{\bsym}[1]{\boldsymbol{#1}}
\newcommand{\til}[1]{\tilde{#1}}
\newcommand{\what}[1]{\widehat{#1}}
\newcommand{\K}{\mbb{K}}
\newcommand{\N}{\mbb{N}}
\newcommand{\Z}{\mbb{Z}}
\newcommand{\m}{\mfrak{m}}
\newcommand{\Ga}{\Gamma}
\renewcommand{\a}{\mfrak{a}}
\renewcommand{\d}{\mrm{d}}
\newcommand{\ot}{\otimes}
\newcommand{\lb}{\linebreak}
\renewcommand{\ot}{\otimes}
\begin{document}

\begin{abstract}
Let $A$ be a commutative noetherian ring, and $\a$ an ideal in it.
In this paper we continue the study, begun in \cite{PSY1}, of the derived
$\a$-adic completion and the derived $\a$-torsion functors. Here are our
results: (1) a structural characterization of bounded above
{\em cohomologically complete complexes}; (2) the {\em
Cohomologically Complete Nakayama Theorem}; and (3) a characterization of {\em
cohomologically cofinite complexes}.
\end{abstract}

\maketitle

\setcounter{section}{-1}
\section{Introduction}

Let $A$ be a noetherian commutative ring, and let $\a$ be an ideal in it.
(We do not assume that $A$ is $\a$-adically complete.)
There are two operations associated to this data: the {\em $\a$-adic
completion} and the {\em $\a$-torsion}.
For an $A$-module $M$ its $\a$-adic completion is the $A$-module
\[ \Lambda_{\a} (M) = \what{M} := \lim_{\leftarrow i} \,
M / \a^i M . \]
We say that $M$ is an {\em $\a$-adically complete module} if the canonical 
homomorphism 
$M \to \Lambda_{\a} (M)$ is an isomorphism.
An element $m \in M$ is called an $\a$-torsion element if
$\a^i m = 0$ for $i \gg 0$. The $\a$-torsion elements form
the $\a$-torsion submodule $\Gamma_{\a} (M)$ of $M$. We say that $M$ 
is an {\em $\a$-torsion module} if the canonical homomorphism 
$\Gamma_{\a} (M) \to M$ is an isomorphism.

Let us denote by $\cat{Mod} A$ the category of $A$-modules. So we have additive
functors
\[ \Lambda_{\a}, \Gamma_{\a} : \cat{Mod} A \to \cat{Mod} A . \]
The functor $\Gamma_{\a}$ is left exact; whereas $\Lambda_{\a}$ is neither left
exact nor right exact. (Of course the completion functor
$\Lambda_{\a}$ is exact on the subcategory $\cat{Mod}_{\mrm{f}} A$ of finitely
generated modules.) Both functors $\Lambda_{\a}$ and $\Gamma_{\a}$ are 
idempotent, namely for any $M \in \cat{Mod} A$ the module 
$\Lambda_{\a}(M)$ is $\a$-adically complete, and the module 
$\Gamma_{\a}(M)$ is $\a$-torsion. (Note that when $A$ is not noetherian,
the functor $\Lambda_{\a}$ might fail to be idempotent.)

In this paper we continue the study of questions of
homological nature about the functors $\Lambda_{\a}$ and $\Gamma_{\a}$, that was 
started in \cite{PSY1}.

We denote by $\cat{C}(\cat{Mod} A)$ the category of complexes over $A$. 
Recall that a complex $P \in \cat{C}(\cat{Mod} A)$ is called {\em K-projective} 
if for any acyclic complex $N \in \cat{C}(\cat{Mod} A)$, the complex
$\opn{Hom}_{A}(P, N)$ is also acyclic.
A complex $I \in \cat{C}(\cat{Mod} A)$ is called {\em K-injective} if
for any acyclic complex $N$, the complex
$\opn{Hom}_{A}(N, I)$ is also acyclic.
These definitions were introduced in \cite{Sp}. In \cite[Section 3]{Ke} it is
shown that ``K-projective'' is the same as ``having property (P)'', and
``K-injective'' is the same as ``having property (I)''.

The derived category of $\cat{Mod} A$ is denoted by $\cat{D}(\cat{Mod} A)$.
As usual we write $\cat{D}^{\mrm{b}}(\cat{Mod} A)$ for the full subcategory of 
$\cat{D}(\cat{Mod} A)$ consisting of bounded complexes. Less customary is our 
notation $\cat{D}(\cat{Mod} A)^{\mrm{b}}$ for the category of complexes with 
bounded cohomology. Of course the inclusion 
$\cat{D}^{\mrm{b}}(\cat{Mod} A) \subset \cat{D}(\cat{Mod} A)^{\mrm{b}}$ 
is an equivalence; and often the distinction between these categories is 
blurred. However, since much of our work has to do with boundedness conditions, 
we wish to be accurate on this point. Similarly we have equivalences
$\cat{D}^{\star}(\cat{Mod} A)  \subset \cat{D}(\cat{Mod} A)^{\star}$, 
where $\star$ is either $+$ or $-$. 

The derived functors
\[ \mrm{L} \Lambda_{\a}, \mrm{R} \Gamma_{\a} :
\cat{D}(\cat{Mod} A) \to \cat{D}(\cat{Mod} A) \]
exist. The left derived functor $\mrm{L} \Lambda_{\a}$ is constructed using
K-projective resolutions, and the right derived functor $\mrm{R} \Gamma_{\a}$
is constructed using K-injective resolutions.
The functors $\mrm{L} \Lambda_{\a}$ and $\mrm{R} \Gamma_{\a}$ have finite 
cohomological dimensions, and therefore they send 
$\cat{D}(\cat{Mod} A)^{\star}$ into itself, where $\star$ is either
$\mrm{b}$, $+$ or $-$.

A complex $M \in \cat{D}(\cat{Mod} A)$ is called a {\em cohomologically
$\a$-torsion complex} if the canonical morphism
$\mrm{R} \Gamma_{\a} (M) \to M$
is an isomorphism.
The complex $M$  is called a {\em cohomologically $\a$-adically complete
complex} if the canonical morphism
$M \to \mrm{L} \Lambda_{\a} (M)$
is an isomorphism.
We denote by $\cat{D}(\cat{Mod} A)_{\a \tup{-tor}}$ and
$\cat{D}(\cat{Mod} A)_{\a \tup{-com}}$
the full subcategories of $\cat{D}(\cat{Mod} A)$ consisting of
cohomologically $\a$-torsion complexes and cohomologically $\a$-adically
complete complexes, respectively. These are triangulated subcategories.
The {\em MGM Equivalence}, \cite[Theorem 1.1]{PSY1}, says that the
functor
\[ \mrm{R} \Gamma_{\a} : \cat{D}(\cat{Mod} A)_{\a \tup{-com}} \to
\cat{D}(\cat{Mod} A)_{\a \tup{-tor}} \]
is an equivalence of triangulated categories, with quasi-inverse
$\mrm{L} \Lambda_{\a}$. (Note that in the noetherian case, the MGM equivalence  
follows easily from results of \cite{AJL2}.)

In \cite{PSY1} we proved that a complex $M$ is cohomologically torsion, namely
it belongs to $\cat{D}(\cat{Mod} A)_{\a \tup{-tor}}$, if and only if all of its
cohomology modules $\opn{H}^i(M)$ are torsion modules. This is false for
cohomologically complete complexes -- indeed, in Example \ref{exa:1}
we exhibit a cohomologically complete complex $M$ such that $\mrm{H}^i (M) = 0$
for all $i \neq 0$, and the module $\mrm{H}^0 (M)$ is {\em not} $\a$-adically
complete. So the category $\cat{D}(\cat{Mod} A)_{\a \tup{-com}}$ is quite
mysterious.

However we do have a structural characterization of the
subcategory \lb $\cat{D}(\cat{Mod} A)^-_{\a \tup{-com}}$ of
cohomologically complete complexes with bounded above cohomology.
The notion of {\em $\a$-adically projective module} is recalled in
Definition \ref{dfn:4}. (See Theorem \ref{thm:108} for a comparison to the {\em 
formally projective modules} of \cite{EGA-IV}.)  The structure of
$\a$-adically projective modules is well-understood (see Corollary
\ref{cor:51}).
Let us denote by $\cat{AdPr} (A, \a)$ the full subcategory of
$\cat{Mod} A$ consisting of $\a$-adically projective modules. This is an
additive category. There is a corresponding triangulated category
$\cat{K}^-(\cat{AdPr} (A, \a))$, which is a full subcategory of the homotopy 
category $\cat{K}(\cat{Mod} A)$.

Here is the first main result of this paper.

\newpage
\begin{thm} \label{thm:100}
Let $A$ be a noetherian commutative ring, and $\a$ an ideal in $A$. The
localization functor
$\cat{K}(\cat{Mod} A) \to \cat{D}(\cat{Mod} A)$
induces an equivalence of triangulated categories
\[ \cat{K}^-(\cat{AdPr} (A, \a)) \to
\cat{D}(\cat{Mod} A)_{\a \tup{-com}}^- . \]
\end{thm}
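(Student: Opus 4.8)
The plan is to show that the functor $Q \colon \cat{K}^-(\cat{AdPr}(A, \a)) \to \cat{D}(\cat{Mod} A)$ induced by localization is well defined, fully faithful, and essentially surjective onto $\cat{D}(\cat{Mod} A)^-_{\a \tup{-com}}$. First I record a basic point, building on \cite{PSY1}: over a noetherian ring the $\a$-adic completion of a free module is flat, so by Corollary \ref{cor:51} every $\a$-adically projective module is flat as well as $\a$-adically complete. Consequently a bounded above complex $P$ of $\a$-adically projective modules is K-flat, and $\mrm{L} \Lambda_{\a}(P) = \Lambda_{\a}(P) = P$. This shows at once that $P$ is cohomologically complete, so that $Q$ takes values in $\cat{D}(\cat{Mod} A)^-_{\a \tup{-com}}$; and $Q$ is triangulated because $\cat{AdPr}(A, \a)$ is additive and $\cat{K}^-(\cat{AdPr}(A, \a))$ is a full triangulated subcategory of $\cat{K}(\cat{Mod} A)$.

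Essential surjectivity is the easy direction. Given $M \in \cat{D}(\cat{Mod} A)^-_{\a \tup{-com}}$, choose a quasi-isomorphism $F \to M$ with $F$ a bounded above complex of free modules. Then $\Lambda_{\a}(F)$ is a bounded above complex of $\a$-adically free modules, hence an object of $\cat{K}^-(\cat{AdPr}(A, \a))$, and since $F$ is K-flat we have $\Lambda_{\a}(F) = \mrm{L} \Lambda_{\a}(F) \cong \mrm{L} \Lambda_{\a}(M) \cong M$ in $\cat{D}(\cat{Mod} A)$, the last isomorphism because $M$ is cohomologically complete. Thus $Q(\Lambda_{\a}(F)) \cong M$.

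The substance of the theorem is full faithfulness, and its crux is the vanishing $\opn{Ext}^i_A(P, M) = 0$ for all $i > 0$, whenever $P$ is $\a$-adically projective and $M$ is $\a$-adically complete. By Corollary \ref{cor:51} it suffices to treat $P = \Lambda_{\a}(\boplus_{s} A)$, and for such $P$ the telescope complex computing the derived completion furnishes an explicit finite free resolution $F_{\bullet} \to P$, built from the single-element telescopes $\boplus_{n} A \xrightarrow{\, e_n \mapsto e_n - a_j e_{n+1} \,} \boplus_{n} A$, where $\a = (a_1, \dots, a_r)$. Applying $\opn{Hom}_A(-, M)$ and arguing by induction on $r$, the resulting differentials on products $\boprod M$ are surjective onto cocycles precisely because $M$ is $\a$-adically complete: the inverse is the convergent series $m_n = \sum_{k \geq 0} a_j^{k} t_{n+k}$. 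Hence $\opn{Hom}_A(F_{\bullet}, M)$ is exact in positive degrees and $\opn{Ext}^{>0}_A(P, M) = 0$; for a general $\a$-adically projective $P$, a direct summand of such a module, the Ext groups are direct summands and vanish as well. I expect this to be the main obstacle, since it is the only step that genuinely uses completeness of the target, and it is what forces the equivalence to stay in the complete world.

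Granting this vanishing, full faithfulness follows by standard homological algebra. For a bounded above complex $P$ of $\a$-adically projective modules and a single $\a$-adically complete module $M$, every term $P^i$ is acyclic for the contravariant functor $\opn{Hom}_A(-, M)$; since $P$ is bounded above, the theory of acyclic resolutions yields a natural isomorphism $\opn{Hom}_A(P, M) \iso \mrm{R} \opn{Hom}_A(P, M)$ in $\cat{D}(\cat{Mod} A)$. Taking now $M = P'$ to be another object of $\cat{K}^-(\cat{AdPr}(A, \a))$, and running a d\'evissage along the brutal truncations of $P'$ (bounded complexes of complete modules, to which the previous case applies) together with a Mittag--Leffler argument in the second variable (the transition maps being degreewise split surjections), one obtains $\mrm{R} \opn{Hom}_A(P, P') \cong \opn{Hom}_A(P, P')$. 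Passing to $\opn{H}^i$ gives $\opn{Hom}_{\cat{D}(\cat{Mod} A)}(P, P'[i]) \cong \opn{H}^i \opn{Hom}_A(P, P') \cong \opn{Hom}_{\cat{K}(\cat{Mod} A)}(P, P'[i])$, which is exactly the assertion that $Q$ is fully faithful. Combined with the two previous paragraphs, $Q$ is an equivalence onto $\cat{D}(\cat{Mod} A)^-_{\a \tup{-com}}$.
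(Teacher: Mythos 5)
Your overall architecture --- reduce everything to the vanishing $\opn{Ext}^{i}_A(P, M) = 0$ for $i>0$, $P$ $\a$-adically projective, $M$ $\a$-adically complete, then extend to complexes by acyclicity and d\'evissage --- is reasonable, and your treatment of well-definedness and essential surjectivity coincides with the paper's Theorem \ref{thm:280}. But the proof of your crux is broken. The telescope complex does \emph{not} furnish a free resolution of $P = \Lambda_{\a}(\boplus_{s} A)$. The complex built from the single-element telescopes $\boplus_{n} A \to \boplus_{n} A$, namely $T := \opn{Tel}(A;\bsym{a}) \ot_A (\boplus_{s} A)$, is indeed a bounded complex of free modules, but it is quasi-isomorphic to $\mrm{R}\Gamma_{\a}(\boplus_{s} A)$ --- a complex with \emph{torsion} cohomology concentrated in degrees $0,\dots,r$ --- and not to the completion $P$. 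The complex that is quasi-isomorphic to $P$ is the Hom-dual $\opn{Hom}_A(T, \boplus_{s} A)$, sitting in degrees $-r,\dots,0$; but its terms are countable products of copies of $\boplus_{s} A$, which are not free and in general not even projective (Baer--Specker: $\prod_{\N} \Z$ is not a free $\Z$-module). So that complex cannot be used naively to compute $\opn{Ext}^*_A(P,-)$: knowing its terms are $\opn{Hom}_A(-,M)$-acyclic is precisely the kind of statement you are trying to prove, and the argument becomes circular.

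What your geometric-series computation actually establishes is that $\opn{Hom}_A(T, M) \cong \mrm{L}\Lambda_{\a}(M)$ has cohomology equal to $M$ concentrated in degree $0$ when $M$ is complete; that is, adically complete modules are cohomologically complete. This is true and relevant, but it computes $\opn{RHom}_A(\mrm{R}\Gamma_{\a}(\boplus_{s} A), M)$, not $\opn{RHom}_A(P, M)$. To bridge the two you would need the Greenlees--May/MGM adjunction from \cite{PSY1}, namely $\opn{RHom}_A(\mrm{L}\Lambda_{\a}(F), N) \cong \opn{RHom}_A(F, N)$ for $N$ cohomologically complete, which you never invoke; with it, your argument can be repaired, giving $\opn{RHom}_A(P,M) \cong \opn{RHom}_A(\boplus_{s} A, M)$, concentrated in degree $0$. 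The paper instead avoids the telescope entirely: Lemma \ref{lem:40} (the universal property $\opn{Hom}_A(\Lambda_{\a}(Q), M) \cong \opn{Hom}_A(Q, M)$ for complete $M$) together with Lemma \ref{lem:33} (quasi-isomorphisms between bounded above complexes of $\a$-adically projectives are homotopy equivalences, via the splitting property of Definition \ref{dfn:4}) yields Lemma \ref{lem:36}: $\opn{Hom}_A(P, M) \cong \opn{RHom}_A(P, M)$ for \emph{any} complex $M$ of complete modules. That formulation handles both variables at once, so it also spares the Mittag--Leffler d\'evissage along brutal truncations that you need in the second variable.
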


This is Theorem \ref{thm:56} in the body of the paper.
In particular we see that a bounded above complex $M$ is cohomologically
complete iff it is isomorphic (in $\cat{D}(\cat{Mod} A)$) to a bounded above
complex of adically projective modules.

Our second main result is influenced by \cite{KS}, as explained in Remark \ref{rem:ks} below.
For the rest of the introduction we assume $A$ is
$\a$-adically complete.

\begin{thm}[Cohomologically Complete Nakayama]  \label{thm:51}
Let $A$ be a noetherian commutative ring, $\a$-adically complete with respect
to an ideal $\a$, and let $A_0 := A / \a$.
Let  $M \in \cat{D}(\cat{Mod} A)_{\a \tup{-com}}$ and $i_0 \in \Z$
be such that $\mrm{H}^i (M) = 0$ for all $i > i_0$, and
$\mrm{H}^{i_0} (A_0 \otimes^{\mrm{L}}_A M)$ is a finitely generated
$A_0$-module. Then $\mrm{H}^{i_0} (M)$ is a finitely generated
$A$-module.
\end{thm}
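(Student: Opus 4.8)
The plan is to reduce the assertion to a vanishing statement, and then to prove the latter by induction on the number of generators of $\a$.

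First I would set $N := \mrm{H}^{i_0}(M)$. Since $\mrm{H}^i(M) = 0$ for $i > i_0$, a K-flat resolution of $M$ concentrated in degrees $\le i_0$ gives $\mrm{H}^{i_0}(A_0 \ot^{\mrm{L}}_A M) \cong A_0 \ot_A N = N / \a N$, so the hypothesis says $N / \a N$ is a finitely generated $A_0$-module. I would then reduce to the case $N/\a N = 0$. Choose $n_1, \dots, n_r \in N$ whose residues generate $N/\a N$. Using $\opn{Hom}_{\cat{D}(\cat{Mod} A)}(A[-i_0], M) = \mrm{H}^{i_0}(M) = N$, assemble a morphism $f \colon E \to M$, where $E := (A^{\oplus r})[-i_0]$ is the free module $A^{\oplus r}$ in cohomological degree $i_0$, with $\mrm{H}^{i_0}(f)(e_j) = n_j$. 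Because $A$ is $\a$-adically complete, $\mrm{L}\Lambda_{\a}(A) = A$, so $E$ is cohomologically complete; as $\cat{D}(\cat{Mod} A)_{\a \tup{-com}}$ is triangulated, the mapping cone $M' := \opn{cone}(f)$ is cohomologically complete. The long exact sequences of the triangle $E \to M \to M' \to E[1]$ give $\mrm{H}^i(M') = 0$ for $i > i_0$ and $C := \mrm{H}^{i_0}(M') = N / (A n_1 + \dots + A n_r)$; applying $A_0 \ot_A (-)$ to the top degree, the surjectivity of $A_0^{\oplus r} \to N/\a N$ forces $\mrm{H}^{i_0}(A_0 \ot^{\mrm{L}}_A M') = A_0 \ot_A C = 0$, i.e. $C = \a C$. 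Hence it suffices to prove that $C = 0$.

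It would then remain to establish the following, for every noetherian $A$ and every ideal $\a$ with $n$ generators (no completeness of $A$ is needed here — it was used only above): if $X \in \cat{D}(\cat{Mod} A)_{\a \tup{-com}}$ has $\mrm{H}^i(X) = 0$ for $i > i_0$ and $C := \mrm{H}^{i_0}(X)$ satisfies $C = \a C$, then $C = 0$. I would prove this by induction on $n$, the key input being the orthogonality description
\[ \cat{D}(\cat{Mod} A)_{\a \tup{-com}} = \{ X : \mrm{R}\opn{Hom}_A(A_{a_i}, X) = 0 \text{ for all } i \}, \]
where $A_{a}$ is the localization; this follows from $\mrm{R}\Gamma_{\a}(A_{a_i}) = 0$ and the formalism of \cite{PSY1}. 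For $n = 1$, $\a = (a)$, I have $\mrm{R}\opn{Hom}_A(A_a, X) = 0$; since $A_a = \varinjlim (A \xrightarrow{a} A \xrightarrow{a} \cdots)$ has a length-one free resolution, the hyper-Ext spectral sequence $E_2^{p,q} = \opn{Ext}^p_A(A_a, \mrm{H}^q(X)) \Rightarrow \mrm{H}^{p+q} \mrm{R}\opn{Hom}_A(A_a, X)$ has only the columns $p = 0, 1$ and degenerates, and reading total degree $i_0$ gives $\opn{Hom}_A(A_a, C) = 0$. On the other hand $C = aC$ means multiplication by $a$ is surjective on $C$, so the natural map $\opn{Hom}_A(A_a, C) = \varprojlim (C \xleftarrow{a} C \xleftarrow{a} \cdots) \to C$ is surjective; hence $C = 0$.

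For the inductive step write $a := a_n$, $\bar{A} := A/(a)$ and $\bar{\b} := (a_1, \dots, a_{n-1}) \bar{A}$, an ideal with $n-1$ generators. Two standard facts from \cite{PSY1} drive it: cohomological completeness commutes with derived base change along $A \to \bar{A}$, so $\bar{X} := \bar{A} \ot^{\mrm{L}}_A X$ lies in $\cat{D}(\cat{Mod} \bar{A})_{\bar{\b} \tup{-com}}$; and the orthogonality above gives $\mrm{R}\opn{Hom}_A(A_a, X) = 0$, so $X$ is also cohomologically $(a)$-complete. Since derived tensoring does not raise the top cohomological degree, $\mrm{H}^i(\bar{X}) = 0$ for $i > i_0$, $\mrm{H}^{i_0}(\bar{X}) = \bar{A} \ot_A C = C/aC$, and $\mrm{H}^{i_0}\big( (\bar{A}/\bar{\b}) \ot^{\mrm{L}}_{\bar{A}} \bar{X} \big) = \mrm{H}^{i_0}(A_0 \ot^{\mrm{L}}_A X) = A_0 \ot_A C = 0$ because $C = \a C$. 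Thus $\bar{X}$ satisfies the inductive hypothesis over $\bar{A}$, giving $C/aC = 0$, i.e. $C = aC$; the base-case argument for the single element $a$ (available since $X$ is cohomologically $(a)$-complete) then yields $C = 0$. The hard part is precisely this passage $C = \a C \Rightarrow C = 0$: it is a derived complete Nakayama for the top cohomology, and the obstacle is that this top cohomology module need not itself be $\a$-adically complete (cf. Example \ref{exa:1}), so the classical Nakayama lemma does not apply directly. The device that circumvents this is the extraction of $\opn{Hom}_A(A_a, C) = 0$ from cohomological completeness, combined with the inverse-limit surjectivity forced by $a$-divisibility.
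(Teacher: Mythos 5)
Your reduction to the vanishing statement (``$X$ cohomologically complete, $\mrm{H}^i(X)=0$ for $i>i_0$, $C:=\mrm{H}^{i_0}(X)$ with $C=\a C$ $\Rightarrow$ $C=0$'') is correct, and so is your base case $n=1$: the two-column hyper-Ext spectral sequence and the inverse-limit surjectivity argument both work, also for $X$ unbounded below. This route --- orthogonality against the localizations $A_{a_i}$ plus induction on the number of generators --- is genuinely different from the paper's proof of Theorem \ref{thm:23}, which instead uses the structure theorem (Theorem \ref{thm:280}) to replace $M$ by a complex of $\a$-adically free modules and then quotes the complete Nakayama of \cite{Ye2}. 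The gap is in your inductive step, namely the claim that $\bar{X}:=\bar{A}\ot^{\mrm{L}}_A X$ lies in $\cat{D}(\cat{Mod}\bar{A})_{\bar{\b}\tup{-com}}$ because ``cohomological completeness commutes with derived base change along $A\to\bar{A}$''. This is not a fact from \cite{PSY1}, and it is not formal. By adjunction and flatness of $A_{a_i}$, what you need is $\opn{RHom}_A(A_{a_i},\bar{A}\ot^{\mrm{L}}_A X)=0$, and to deduce this from $\opn{RHom}_A(A_{a_i},X)=0$ you must move $\bar{A}\ot^{\mrm{L}}_A-$ inside $\opn{RHom}_A(A_{a_i},-)$. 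Since $\opn{RHom}_A(A_{a_i},-)$ is built from countable products (via the two-term free resolution of $A_{a_i}$), and $\bar{A}\ot^{\mrm{L}}_A-$ commutes with such products when $\bar{A}$ is perfect over $A$ but not in general, this commutation is justified essentially only when $a_n$ is a nonzerodivisor --- and a generator of $\a$ need not be one; if $a_n$ is a zerodivisor, $A/(a_n)$ typically has infinite projective dimension. Nor is there a soft argument to fall back on: derived completeness is not preserved by infinite direct sums (e.g.\ $\bigoplus_{\N}\Z_p$ is not derived $p$-complete), so ``derived base change preserves completeness'' is false for general ring maps. For the pseudocoherent base change $A\to A/(a_n)$ over a noetherian ring your claim is in fact true, but its proof requires a further nontrivial lemma (for instance: a bounded above complex is cohomologically complete iff all its cohomology modules are derived complete, combined with a stabilization argument along a finite free resolution of $A/(a_n)$), none of which appears in your proposal.

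The step can be repaired without leaving your framework: tensor with the Koszul complex $\opn{K}(A;a_n)=(A\xar{a_n}A)$ instead of with $A/(a_n)$, and run the induction over the \emph{same} ring $A$ with the ideal $\b:=(a_1,\dots,a_{n-1})$; your inductive statement quantifies over all noetherian rings and all ideals with fewer generators, so this is permitted. (It is cleanest to restate its hypothesis as ``$\opn{RHom}_A(A_{a_i},X)=0$ for all $i$'', so that only the easy direction of your orthogonality description is ever needed; that direction does follow from GM duality in \cite{PSY1} together with $\mrm{R}\Gamma_{\a}(A_{a_i})=0$.) Then $Y:=\opn{K}(A;a_n)\ot^{\mrm{L}}_A X$ satisfies $\opn{RHom}_A(A_{a_i},Y)\cong \opn{K}(A;a_n)\ot^{\mrm{L}}_A\opn{RHom}_A(A_{a_i},X)=0$ for $i<n$, because $\opn{K}(A;a_n)$ is perfect; by the K\"unneth trick $\mrm{H}^i(Y)=0$ for $i>i_0$ and $\mrm{H}^{i_0}(Y)\cong C/a_nC$, and $C=\a C$ gives $C/a_nC=\b\cdot(C/a_nC)$. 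The inductive hypothesis then yields $C=a_nC$, and your base-case argument for the single element $a_n$ (available since $\opn{RHom}_A(A_{a_n},X)=0$) finishes the proof.
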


This is Theorem \ref{thm:23} in the body of the paper. Note that in particular
$\mrm{H}^{0} (M)$ is $\a$-adically complete as $A$-module, in contrast to
Example \ref{exa:1}.

A consequence of Theorem \ref{thm:51} is:

\begin{cor}
Let $M \in \cat{D}(\cat{Mod} A)^-_{\a \tup{-com}}$.
If $A_0 \otimes^{\mrm{L}}_A M = 0$ then $M = 0$.
\end{cor}

See Remark \ref{rem:ks} for an extension of this result to unbounded complexes.

Consider the category $\cat{D}_{\tup{f}}(\cat{Mod} A)^{\mrm{b}}$
of complexes with bounded finitely generated cohomology. It is the closure 
under isomorphisms, inside $\cat{D}(\cat{Mod} A)$, of 
$\cat{D}^{\mrm{b}}_{\tup{f}}(\cat{Mod} A)$. 
It is not hard to see that 
$\cat{D}_{\tup{f}}(\cat{Mod} A)^{\mrm{b}}$ is contained in
$\cat{D}(\cat{Mod} A)_{\a \tup{-com}}$ (see Proposition \ref{prop:263}).
We denote by $\cat{D}(\cat{Mod} A)^{\mrm{b}}_{\a \tup{-cof}}$
the essential image of $\cat{D}_{\tup{f}}(\cat{Mod} A)^{\mrm{b}}$
under the functor $\mrm{R} \Gamma_{\a}$; so
\[ \cat{D}(\cat{Mod} A)^{\mrm{b}}_{\a \tup{-cof}} \subset
\cat{D}(\cat{Mod} A)^{\mrm{b}}_{\a \tup{-tor}}  . \]
The objects of $\cat{D}(\cat{Mod} A)^{\mrm{b}}_{\a \tup{-cof}}$
are called {\em cohomologically $\a$-adically cofinite complexes}.
By MGM Equivalence we have an equivalence of triangulated categories
\[ \mrm{R} \Gamma_{\a} : \cat{D}_{\tup{f}}(\cat{Mod} A)^{\mrm{b}} \to
\cat{D}(\cat{Mod} A)^{\mrm{b}}_{\a \tup{-cof}} , \]
with quasi-inverse $\mrm{L} \Lambda_{\a}$.
This implies that for
$M \in \cat{D}(\cat{Mod} A)^{\mrm{b}}_{\a \tup{-tor}}$
to be cohomologically cofinite it is necessary and sufficient that
$\mrm{L} \Lambda_{\a} (M) \in \cat{D}_{\tup{f}}(\cat{Mod} A)^{\mrm{b}}$.
See Proposition \ref{prop:3}. Yet this last condition is hard to check!

The importance of $\cat{D}(\cat{Mod} A)_{\a \tup{-cof}}^{\mrm{b}}$
comes from the fact that it contains the {\em t-dualizing complexes} (defined in 
\cite{AJL2}). The next theorem (which is Theorem
\ref{thm:cof.1} in the body of the paper, and whose proof uses
Theorem \ref{thm:51}) answers a question we asked in 1998 in
\cite{Ye1}.

\begin{thm} \label{thm:52}
Let $A$ be a noetherian commutative ring, $\a$-adically complete with respect to
an ideal $\a$, and let $A_0 := A / \a$. The following conditions are
equivalent for
$M \in \cat{D}(\cat{Mod} A)^{\mrm{b}}_{\a \tup{-tor}}$~\tup{:}
\begin{enumerate}
\rmitem{i} $M$ is cohomologically $\a$-adically cofinite.
\rmitem{ii} For every $i$ the $A_0$-module
$\opn{Ext}^i_A(A_0, M)$ is finitely generated.
\end{enumerate}
\end{thm}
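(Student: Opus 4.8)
The plan is to transport both conditions to the complex $N := \mrm{L} \Lambda_{\a}(M)$. By the MGM Equivalence we have $M \cong \mrm{R} \Gamma_{\a}(N)$, and $N$ is a \emph{bounded, cohomologically complete} complex (it lies in the image of $\mrm{L} \Lambda_{\a}$, and $\mrm{L} \Lambda_{\a}$ has finite cohomological dimension). By Proposition \ref{prop:3}, condition (i) is equivalent to $N \in \cat{D}_{\tup{f}}(\cat{Mod} A)^{\mrm{b}}$. For (ii), since $A_0 = A / \a$ is an $\a$-torsion module and $M \cong \mrm{R} \Gamma_{\a}(N)$, the counit $\mrm{R} \Gamma_{\a}(N) \to N$ induces an isomorphism $\mrm{R} \opn{Hom}_A(A_0, M) \cong \mrm{R} \opn{Hom}_A(A_0, N)$: applying the GM duality adjunction $\mrm{R} \opn{Hom}_A(\mrm{R} \Gamma_{\a} A_0, -) \cong \mrm{R} \opn{Hom}_A(A_0, \mrm{L} \Lambda_{\a}(-))$ to the cone of the counit, and noting that this cone is killed by $\mrm{L} \Lambda_{\a}$, makes the difference vanish. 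Thus the theorem reduces, for a bounded cohomologically complete $N$, to
\[
N \in \cat{D}_{\tup{f}}(\cat{Mod} A)^{\mrm{b}}
\quad \Longleftrightarrow \quad
\mrm{R} \opn{Hom}_A(A_0, N) \text{ has finitely generated cohomology} .
\]
The implication $\Rightarrow$ is the routine finiteness of $\mrm{R} \opn{Hom}_A$ between complexes with finitely generated cohomology over the noetherian ring $A$ (resolve $A_0$ by finite free modules and truncate against the bounded $N$).

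\textbf{From $\opn{Ext}$-finiteness to $\opn{Tor}$-finiteness.}
For the hard direction $\Leftarrow$, the main work is to convert the hypothesis into a statement about derived tensor products, where the Cohomologically Complete Nakayama Theorem applies. Choose generators $a_1, \dots, a_n$ of $\a$ and let $\mrm{K} := \mrm{K}(A; a_1, \dots, a_n)$ be the Koszul complex, a bounded complex of finite free modules whose cohomology modules are finitely generated over $A_0$. First, writing $U := \mrm{R} \opn{Hom}_A(A_0, N)$ (bounded below, with finitely generated cohomology over the noetherian ring $A_0$), the adjunction $\mrm{R} \opn{Hom}_A(V, N) \cong \mrm{R} \opn{Hom}_{A_0}(V, U)$ together with the spectral sequence $\opn{Ext}^p_{A_0}(V, \mrm{H}^q(U))$ (finitely many nonzero terms in each total degree, as $U$ is bounded below) shows that $\mrm{R} \opn{Hom}_A(V, N)$ has finitely generated cohomology for \emph{every} finitely generated $A_0$-module $V$. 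Applying this to the cohomology modules of $\mrm{K}$ and using a finite filtration argument gives that $\mrm{R} \opn{Hom}_A(\mrm{K}, N)$ has finitely generated cohomology. Finally, Koszul self-duality gives $\mrm{R} \opn{Hom}_A(\mrm{K}, N) \cong \mrm{K} \otimes_A N$ up to a shift, so $\mrm{K} \otimes_A N$ has finitely generated cohomology.

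\textbf{The Nakayama induction.}
I now argue by descending induction on the top nonvanishing cohomological degree $i_0$ of $N$ (legitimate since $N$ is bounded). At the top one has $\mrm{H}^{i_0}(\mrm{K} \otimes_A N) \cong A_0 \otimes_A \mrm{H}^{i_0}(N) \cong \mrm{H}^{i_0}(A_0 \otimes^{\mrm{L}}_A N)$, which is finitely generated, so Theorem \ref{thm:51} shows $\mrm{H}^{i_0}(N)$ is a finitely generated $A$-module. By Proposition \ref{prop:263} the complex $\mrm{H}^{i_0}(N)[-i_0]$ is then cohomologically complete, hence so is the truncation $N' := \tau_{\le i_0 - 1} N$ fitting in the triangle $N' \to N \to \mrm{H}^{i_0}(N)[-i_0] \to N'[1]$. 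Tensoring this triangle with $\mrm{K}$, and using that $\mrm{K} \otimes_A \mrm{H}^{i_0}(N)[-i_0]$ has finitely generated cohomology, shows that $\mrm{K} \otimes_A N'$ does too; the inductive hypothesis then applies to $N'$, and we conclude $N \in \cat{D}_{\tup{f}}(\cat{Mod} A)^{\mrm{b}}$, i.e.\ (i).

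\textbf{The main obstacle} is exactly the passage from $\opn{Ext}$-finiteness to $\opn{Tor}$-finiteness in the middle paragraph. The two are genuinely asymmetric — $\mrm{R} \opn{Hom}_A(A_0, N)$ is bounded below but typically unbounded above, while $A_0 \otimes^{\mrm{L}}_A N$ is bounded above — which is why a direct comparison fails and why the Koszul complex, being simultaneously self-dual and perfect, is the correct bridge. Its self-duality converts $\opn{Hom}$-finiteness into $\otimes$-finiteness, and its boundedness lets the complete Nakayama induction proceed from the top cohomological degree, where $A_0 \otimes^{\mrm{L}}_A N$ and $\mrm{K} \otimes_A N$ carry the same information.
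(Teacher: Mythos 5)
Your proof is correct, and it is built from exactly the same toolkit as the paper's: the MGM reduction to $N := \mrm{L}\Lambda_{\a}(M)$ (Proposition \ref{prop:3}), the transport of $\opn{Ext}$-finiteness from $M$ to $N$, the Koszul complex with its self-duality as the bridge from $\opn{Ext}$-finiteness to $\opn{Tor}$-finiteness (the paper's Lemmas \ref{lem:31} and \ref{lem:4}), the K\"unneth trick at the top degree, the Cohomologically Complete Nakayama Theorem \ref{thm:23}, and truncation triangles together with Proposition \ref{prop:263} and the fact that $\cat{D}(\cat{Mod} A)_{\a \tup{-com}}$ is triangulated. The one genuine difference is the organization of the induction. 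The paper (Lemma \ref{lem:cof.1}) inducts downward on the degree $i$, truncating $N$ into $L = \tau_{\leq i}N$ and the finitely generated part $\tau_{> i}N$ at every step; since the hypothesis it carries is $\opn{Ext}$-finiteness, it must re-establish $\opn{Ext}$-finiteness for $L$ at each step (long exact sequence plus the easy implication applied to $\tau_{> i}N$), and only then convert to $\opn{Tor}$-finiteness in the single top degree of $L$ via Lemma \ref{lem:4}. You instead perform the conversion once and for all at the outset: the invariant you carry through the induction is that $K \otimes_A N$ has finitely generated cohomology, and this passes to the truncation $\tau_{\leq i_0 - 1}N$ immediately from the tensored triangle, since $K \otimes_A \mrm{H}^{i_0}(N)[-i_0]$ is a bounded complex of finitely generated modules. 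This makes the induction self-contained on the tensor side and avoids re-invoking the adjunction/spectral-sequence argument at each step; the mild price is that you need finiteness of $\mrm{H}^j(K \otimes_A N)$ in all degrees (which your filtration argument does deliver), whereas the paper only ever extracts the top degree. Your alternative justification of $\opn{RHom}_A(A_0, M) \cong \opn{RHom}_A(A_0, N)$ via the GM duality adjunction applied to the cone of the counit is also fine, and equivalent to the paper's use of $A_0 \cong \mrm{L}\Lambda_{\a}(A_0)$ together with full faithfulness of $\mrm{L}\Lambda_{\a}$ on torsion complexes.
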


In Proposition \ref{prop:ha2}, we show that our definition of a cohomologically  
cofinite complex coincides with that of \cite{Ha2} for bounded complexes when 
$A$ is a regular ring.

\medskip \noindent
\textbf{Acknowledgments.}
We wish to thank Joseph Lipman, Ana Jeremias and Leo Alonso
for helpful discussions. We are also grateful to the anonymous referee who read  
the paper carefully and made some useful suggestions.

\section{Structural Results for Cohomologically Complete Complexes}

In this section we assume this setup:

\begin{setup} \label{set:300}
$A$ is a noetherian commutative ring, and $\a$ is an ideal in it.
\end{setup}

We do not assume that $A$ is $\a$-adically complete. We wish to gain a
better understanding of cohomologically $\a$-adically
complete complexes. For this we recall some notation, definitions and results 
from \cite{Ye2}.

Let $Z$ be a set. We denote by $\opn{F}(Z, A)$
the set of all functions $f : Z \to A$. This is an $A$-module.
The subset of finite support functions is denoted by
$\opn{F}_{\mrm{fin}}(Z, A)$; this is a free $A$-module with basis the set
$\{ \delta_z \}_{z \in Z}$ of delta functions.

Let $\what{A} := \Lambda_{\a} (A)$, and let
$\what{\a} := \a \cdot \what{A}$, an ideal of the ring $\what{A}$. Then
$\what{\a} \cong \Lambda_{\a} (\a)$, the ring $\what{A}$ is
$\what{\a}$-adically complete and noetherian, and the homomorphism
$A \to \what{A}$ is flat. Given an element $a \in \what{A}$, its
$\a$-adic order is
\[ \opn{ord}_{\a}(a) := \sup \, \{ i \in \N \mid a \in \what{\a}^{\, i} \}
\in \N \cup \{ \infty \} . \]

\begin{dfn} \label{dfn:decay}
Let $Z$ be a set.
\begin{enumerate}
\item A function $f : Z \to \what{A}$ is called {\em $\a$-adically decaying} if
for every $i \in \N$ the set
\[ \{ z \in Z \mid \opn{ord}_{\a}( f(z) ) \leq i \} \]
is finite.

\item The set of $\a$-adically decaying functions $f : Z \to \what{A}$ is
called the {\em module of decaying functions}, and is denoted by
$\opn{F}_{\tup{dec}}(Z, \what{A})$.

\item An $A$-module is called {\em $\a$-adically free} if it is isomorphic to
$\opn{F}_{\tup{dec}}(Z, \what{A})$ for some set $Z$.
\end{enumerate}
\end{dfn}

Note that $\opn{F}_{\tup{dec}}(Z, \what{A})$ is an $\what{A}$-submodule of
$\opn{F}(Z, \what{A})$. 

\begin{exa}
The ring of restricted formal power series $\what{A}\{t_1,\dots,t_n\}$,  
defined in \cite[Section III.4.2]{CA}, is canonically isomorphic, as an 
$\hat{A}$-module, to $\opn{F}_{\mrm{dec}}(\mbb{N}^n, \what{A})$.
\end{exa}

Recall that an $A$-module $M$ is called $\a$-adically complete if the 
canonical homomorphism 
$\tau_M : M \to \Lambda_{\a}(M)$ is bijective.

\begin{dfn} \label{dfn:4}
An $A$-module $P$ is called {\em $\a$-adically projective} if it has these two
properties:
\begin{enumerate}
\rmitem{i} $P$ is $\a$-adically complete.
\rmitem{ii} Suppose $M$ and $N$ are $\a$-adically complete modules, and
$\phi : M \to N$ is a surjection. Then any homomorphism $\psi : P \to N$
lifts to a homomorphism $\til{\psi} : P \to M$; namely
$\phi \circ \til{\psi} = \psi$.
\end{enumerate}
\end{dfn}

\begin{thm}[{\cite[Section 3]{Ye2}}]  \label{thm:70}
Let $Z$ be a set.
\begin{enumerate}
\item The $A$-module $\opn{F}_{\tup{dec}}(Z, \what{A})$ is the
$\a$-adic completion of $\opn{F}_{\mrm{fin}}(Z, A)$.
More precisely, there is a unique $A$-linear isomorphism
\[ \opn{F}_{\tup{dec}}(Z, \what{A}) \cong
\Lambda_{\a} (\opn{F}_{\mrm{fin}}(Z, A)) \]
that is compatible with the homomorphisms from $\opn{F}_{\mrm{fin}}(Z, A)$.

\item The $A$-module $\opn{F}_{\tup{dec}}(Z, \what{A})$ is flat and
$\a$-adically complete.

\item Let $M$ be any $\a$-adically complete $A$-module, and let
$f : Z \to M$ be any function. Then there is a unique $A$-linear
homomorphism
$\phi : \opn{F}_{\tup{dec}}(Z, \what{A}) \to M$
such that $\phi(\delta_z) = f(z)$ for every $z \in Z$.
\end{enumerate}
\end{thm}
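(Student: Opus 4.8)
The plan is to prove the three parts in turn, with part~(1) carrying the essential content and parts~(2),~(3) following from it.

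For part~(1), I would compute the completion $\Lambda_{\a}(\opn{F}_{\mrm{fin}}(Z, A))$ explicitly. Since $\opn{F}_{\mrm{fin}}(Z, A)$ is free on $\{ \delta_z \}_{z \in Z}$, one has $\opn{F}_{\mrm{fin}}(Z, A) / \a^i \opn{F}_{\mrm{fin}}(Z, A) \cong \opn{F}_{\mrm{fin}}(Z, A / \a^i)$, and therefore
\[ \Lambda_{\a}(\opn{F}_{\mrm{fin}}(Z, A)) = \lim_{\leftarrow i} \, \opn{F}_{\mrm{fin}}(Z, A / \a^i) . \]
An element of this inverse limit is a compatible system $(g_i)_{i}$ of finitely supported functions $g_i : Z \to A / \a^i$. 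Using the canonical isomorphism $A / \a^i \cong \what{A} / \what{\a}^{\, i}$ (valid because $A$ is noetherian), each such system assembles into a single function $f : Z \to \what{A}$ with $f(z) = \lim_{\leftarrow i} g_i(z)$, and one has $g_i(z) = 0$ exactly when $\opn{ord}_{\a}(f(z)) \geq i$. Hence the requirement that every $g_i$ have finite support translates precisely into the condition that $f$ be $\a$-adically decaying. This produces an $A$-linear bijection onto $\opn{F}_{\tup{dec}}(Z, \what{A})$ which is compatible with the two maps out of $\opn{F}_{\mrm{fin}}(Z, A)$; its uniqueness follows because the image of $\opn{F}_{\mrm{fin}}(Z, A)$ is dense for the $\a$-adic topology on both sides.

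For part~(2), completeness is immediate: over a noetherian ring $\Lambda_{\a}$ is idempotent, so the module exhibited in part~(1) as a completion is $\a$-adically complete. For flatness I would use the inverse-limit presentation from part~(1): setting $M_i := \opn{F}_{\mrm{fin}}(Z, A / \a^i)$, each $M_i$ is free, hence flat, over $A / \a^i$, and the transition maps induce isomorphisms $M_{i+1} / \a^i M_{i+1} \iso M_i$. The standard local criterion of flatness, in its form for such compatible inverse systems over a noetherian ring, then shows that $\lim_{\leftarrow i} M_i = \opn{F}_{\tup{dec}}(Z, \what{A})$ is flat over $A$ and that $\opn{F}_{\tup{dec}}(Z, \what{A}) / (\a^i \, \opn{F}_{\tup{dec}}(Z, \what{A})) \cong M_i$. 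I expect this flatness step to be the main obstacle: its real content is that the inverse-limit filtration coincides with the $\a$-adic filtration, equivalently that $\a^i \, \opn{F}_{\tup{dec}}(Z, \what{A})$ equals the kernel of the projection onto $M_i$, and this is exactly where the Artin--Rees lemma and the noetherian hypothesis are indispensable.

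For part~(3), existence and uniqueness both reduce to part~(1). Since $\opn{F}_{\mrm{fin}}(Z, A)$ is free on $\{ \delta_z \}$, there is a unique $A$-linear map $u : \opn{F}_{\mrm{fin}}(Z, A) \to M$ with $u(\delta_z) = f(z)$. Applying $\Lambda_{\a}$, invoking the identification of part~(1), and using $\Lambda_{\a}(M) \cong M$ (as $M$ is $\a$-adically complete) yields a homomorphism $\phi : \opn{F}_{\tup{dec}}(Z, \what{A}) \to M$ with $\phi(\delta_z) = f(z)$. Uniqueness follows from the facts that any $A$-linear map is automatically continuous for the $\a$-adic topologies, that $M$ is separated (being complete), and that the finite $A$-linear combinations of the $\delta_z$ are dense in $\opn{F}_{\tup{dec}}(Z, \what{A})$; hence $\phi$ is determined by its values on the $\delta_z$.
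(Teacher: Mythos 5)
Theorem \ref{thm:70} is quoted in the paper from \cite[Section 3]{Ye2} without proof, so your attempt can only be measured against the argument given there. Your part (1) is correct as far as the bijection goes: the identification $\Lambda_{\a}(\opn{F}_{\mrm{fin}}(Z,A)) \cong \lim_{\leftarrow i} \, \opn{F}_{\mrm{fin}}(Z, A/\a^i)$, and the observation that a compatible system $(g_i)$ is the same thing as a decaying function $Z \to \what{A}$, is essentially how the cited source begins. The trouble is that everything after that rests on two claims you never prove.

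First, in part (2) you obtain completeness from ``over a noetherian ring $\Lambda_{\a}$ is idempotent''. For infinitely generated modules this idempotence is itself a theorem of \cite{Ye2}, proved there as a consequence of the structure theory of $\opn{F}_{\tup{dec}}(Z,\what{A})$ --- that is, of the very statement you are trying to prove; quoting it here is circular. Second, you obtain flatness and the crucial identity $\a^i \, \opn{F}_{\tup{dec}}(Z,\what{A}) = \opn{Ker} \bigl( \opn{F}_{\tup{dec}}(Z,\what{A}) \to \opn{F}_{\mrm{fin}}(Z, A/\a^i) \bigr)$ from ``the standard local criterion of flatness, in its form for compatible inverse systems over a noetherian ring''. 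No such off-the-shelf criterion applies: the classical local criterion (\cite[Chapter III, \S 5]{CA}) requires the module to be ideally separated, and verifying that for this infinitely generated module is precisely the filtration comparison you are assuming; the inverse-system versions found in standard references carry finite-generation hypotheses, and the version without them is exactly the main theorem of \cite{Ye2}. Note also that your uniqueness arguments in parts (1) and (3) silently use the same unproved fact, since density of the image of $\opn{F}_{\mrm{fin}}(Z,A)$ for the $\a$-adic topology of the completion (as opposed to the inverse-limit topology) is equivalent to that filtration comparison, and $A$-linear maps are only automatically continuous for the adic topologies. So the whole proposal reduces to one claim for which no proof is offered. Closing the gap requires real work: writing $\a^n = (b_1, \ldots, b_m)$, one must solve $g_i = \sum_j b_j h_{j,i}$ compatibly in $i$, a Mittag--Leffler type argument whose uniformity over the infinite index set $Z$ rests on the Artin--Rees lemma; this is what \cite[Sections 2--3]{Ye2} actually supplies. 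You correctly flagged Artin--Rees and the noetherian hypothesis as the crux, but flagging the difficulty is not the same as resolving it.
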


\begin{cor}[{\cite[Corollary 3.15]{Ye2}}]  \label{cor:100}
Any $\a$-adically complete $A$-module is a quotient of an $\a$-adically free 
$A$-module.
\end{cor}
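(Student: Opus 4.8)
The plan is to mimic the classical argument that every module is a quotient of a free module, with the universal property established in Theorem \ref{thm:70}(3) playing the role of the universal property of a free module. Concretely, given an $\a$-adically complete $A$-module $M$, I would take the index set $Z$ to be the underlying set of $M$ itself, and let $f : Z \to M$ be the identity function $\opn{id}_M$. By Theorem \ref{thm:70}(3) there is then a unique $A$-linear homomorphism
\[ \phi : \opn{F}_{\tup{dec}}(M, \what{A}) \to M \]
satisfying $\phi(\delta_m) = m$ for every $m \in M$.

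Next I would observe that the source $\opn{F}_{\tup{dec}}(M, \what{A})$ is $\a$-adically free, by Definition \ref{dfn:decay}(3), so the corollary is proved once $\phi$ is shown to be surjective. But surjectivity is immediate: for any $m \in M$ we have $m = \phi(\delta_m)$, so $m$ lies in the image of $\phi$. Hence $\phi$ exhibits $M$ as a quotient of the $\a$-adically free module $\opn{F}_{\tup{dec}}(M, \what{A})$.

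I do not expect any genuine obstacle, since all the substance has already been packaged into Theorem \ref{thm:70}; the corollary is a purely formal consequence, exactly parallel to the uncompleted case. The one point worth stating explicitly is that the hypothesis that $M$ be $\a$-adically complete is precisely what is needed in order to apply part (3), which supplies the extension of the set-map $\opn{id}_M$ to an $A$-linear map out of $\opn{F}_{\tup{dec}}(M, \what{A})$; without completeness of the target there would be no reason for such an extension to exist.
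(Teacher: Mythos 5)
Your proof is correct: Theorem \ref{thm:70}(3) applies verbatim with $Z$ the underlying set of $M$ and $f = \opn{id}_M$, and surjectivity of the resulting $\phi$ is immediate from $\phi(\delta_m) = m$. The paper itself only cites \cite[Corollary 3.15]{Ye2} rather than giving a proof, but this universal-property argument is exactly the standard one (and is the same idea the paper uses in its unnamed lemma constructing $\a$-adically free resolutions, where a generating set plays the role of your index set $M$), so you have taken essentially the same approach.
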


\begin{cor}[{\cite[Proposition 3.13]{Ye2}}]  \label{cor:51}
An $A$-module is $\a$-adically projective if and only if
it is a direct summand of some $\a$-adically free module.
\end{cor}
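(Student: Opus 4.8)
The plan is to run the classical argument that ``projective objects are precisely the direct summands of free objects,'' transported to the adic setting, where the role of a free module is played by $\opn{F}_{\tup{dec}}(Z, \what{A})$. The two engines are the universal property of $\opn{F}_{\tup{dec}}(Z, \what{A})$ among $\a$-adically complete modules (Theorem \ref{thm:70}(3)) and the fact, from Corollary \ref{cor:100}, that every $\a$-adically complete module is a quotient of an $\a$-adically free one. Throughout, the decisive point is to keep every module in sight $\a$-adically complete, so that both the lifting property of Definition \ref{dfn:4}(ii) and the universal property of Theorem \ref{thm:70}(3) are applicable.

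For the ``if'' direction I would first verify that $\opn{F}_{\tup{dec}}(Z, \what{A})$ is itself $\a$-adically projective. Property (i) of Definition \ref{dfn:4} holds by Theorem \ref{thm:70}(2). For property (ii), given a surjection $\phi : M \surj N$ of $\a$-adically complete modules and a homomorphism $\psi : \opn{F}_{\tup{dec}}(Z, \what{A}) \to N$, I would choose for each $z \in Z$ a preimage $m_z \in M$ of $\psi(\delta_z)$, and then invoke the universal property (Theorem \ref{thm:70}(3), applicable since $M$ is complete) to extend the assignment $z \mapsto m_z$ to a unique $A$-linear $\til{\psi} : \opn{F}_{\tup{dec}}(Z, \what{A}) \to M$. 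Since $\phi \circ \til{\psi}$ and $\psi$ agree on every $\delta_z$ and both land in the complete module $N$, the uniqueness clause of Theorem \ref{thm:70}(3) forces $\phi \circ \til{\psi} = \psi$. Next I would show that a direct summand $P'$ of an $\a$-adically projective module $P = P' \oplus P''$ is again $\a$-adically projective: completeness of $P'$ follows because $\Lambda_{\a}$ respects finite direct sums, so the canonical map $\tau_P$ splits as $\tau_{P'} \oplus \tau_{P''}$ and its bijectivity forces that of $\tau_{P'}$; the lifting property is inherited by the standard trick of extending a map out of $P'$ to $P$ via the projection, lifting it over $\phi$, and restricting back along the inclusion.

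For the ``only if'' direction, let $P$ be $\a$-adically projective. By definition it is $\a$-adically complete, so Corollary \ref{cor:100} provides a surjection $\phi : F \surj P$ from an $\a$-adically free module $F = \opn{F}_{\tup{dec}}(Z, \what{A})$. Both $F$ and $P$ are $\a$-adically complete, so Definition \ref{dfn:4}(ii), applied with $M = F$, $N = P$ and $\psi = \opn{id}_P$, lifts the identity to a homomorphism $\sigma : P \to F$ satisfying $\phi \circ \sigma = \opn{id}_P$. This splitting exhibits $P$ as a direct summand of the $\a$-adically free module $F$.

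The argument is essentially formal, and I do not expect a serious obstacle; the only points requiring care are exactly the completeness bookkeeping. Concretely, the two places that could trip one up are (a) remembering that a direct summand of an $\a$-adically complete module is again complete---which rests on $\Lambda_{\a}$ commuting with finite direct sums---and (b) ensuring that every source and target to which one applies the lifting property of Definition \ref{dfn:4}(ii) or the universal property of Theorem \ref{thm:70}(3) genuinely lies in the category of $\a$-adically complete modules, since outside that category neither tool is available.
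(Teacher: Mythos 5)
Your proof is correct. Note that the paper itself gives no argument for this statement: it is quoted from \cite[Proposition 3.13]{Ye2}, so there is no in-paper proof to diverge from, and your argument is exactly the standard ``projective iff summand of free'' proof transported to the adic setting, which is what the cited source does. The one genuinely delicate point is handled properly: since the delta functions $\delta_z$ do \emph{not} generate $\opn{F}_{\tup{dec}}(Z, \what{A})$ as an $A$-module (only topologically), agreement of $\phi \circ \til{\psi}$ and $\psi$ on the $\delta_z$ would not by itself force equality; it is the uniqueness clause of Theorem \ref{thm:70}(3), available exactly because the target $N$ is $\a$-adically complete, that closes this gap, and you invoke it correctly. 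The completeness bookkeeping you flag in (a) and (b) is likewise sound: additivity of $\Lambda_{\a}$ gives completeness of direct summands, and every application of Definition \ref{dfn:4}(ii) and of Theorem \ref{thm:70}(3) in your argument has complete source and target.
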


\begin{cor} \label{cor:50}
\begin{enumerate}
\item Any  $\a$-adically projective $A$-module is flat.

\item Any $\a$-adically complete $A$-module is a quotient of an
$\a$-adically projective $A$-module.

\item If $P$ is a projective $A$-module then its $\a$-adic completion
$\what{P}$ is $\a$-adically projective.
\end{enumerate}
\end{cor}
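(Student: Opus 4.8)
The plan is to derive all three parts formally from the structural results already in hand, namely Corollaries \ref{cor:100} and \ref{cor:51} together with Theorem \ref{thm:70}. No new construction is needed; each part is a short deduction once the right summand or universal property is invoked.

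For part (1) I would start from Corollary \ref{cor:51}, which exhibits an $\a$-adically projective module $P$ as a direct summand of some $\a$-adically free module $\opn{F}_{\tup{dec}}(Z, \what{A})$. By Theorem \ref{thm:70}(2) this ambient module is flat, and since a direct summand of a flat module is flat, $P$ is flat. For part (2) I would combine Corollaries \ref{cor:100} and \ref{cor:51}: given an $\a$-adically complete module $M$, Corollary \ref{cor:100} provides a surjection onto $M$ from an $\a$-adically free module, and any $\a$-adically free module is in particular $\a$-adically projective (it is a direct summand of itself, so Corollary \ref{cor:51} applies). This presents $M$ as a quotient of an $\a$-adically projective module.

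For part (3), let $P$ be projective, so that $P$ is a direct summand of a free module $\opn{F}_{\mrm{fin}}(Z, A)$ for some set $Z$; choose a split injection $i : P \to \opn{F}_{\mrm{fin}}(Z, A)$ with retraction $r$. Applying $\Lambda_{\a}$ and using its functoriality, the relation $r \circ i = \opn{id}_P$ gives $\Lambda_{\a}(r) \circ \Lambda_{\a}(i) = \opn{id}_{\what{P}}$, so that $\what{P}$ is a direct summand of $\Lambda_{\a}(\opn{F}_{\mrm{fin}}(Z, A))$. By Theorem \ref{thm:70}(1) the latter is canonically the $\a$-adically free module $\opn{F}_{\tup{dec}}(Z, \what{A})$, whence $\what{P}$ is $\a$-adically projective by Corollary \ref{cor:51}.

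I do not expect a genuine obstacle, since the corollary is a package of formal consequences of the preceding theorem; the only point requiring care is in part (3). Completion does \emph{not} commute with the infinite direct sum $\opn{F}_{\mrm{fin}}(Z, A) = \boplus_{z} A$, so one must avoid any such claim. The argument sidesteps this entirely by transporting a single splitting through the functor $\Lambda_{\a}$ and then identifying the completion of the ambient free module via Theorem \ref{thm:70}(1), rather than computing it termwise.
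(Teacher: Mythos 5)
Your proposal is correct and follows exactly the route the paper intends: its proof of this corollary is literally the one line ``Combine Theorem \ref{thm:70} and Corollaries \ref{cor:100} and \ref{cor:51},'' and your three deductions (summand of flat is flat; adically free is adically projective; transporting a splitting through $\Lambda_{\a}$ and identifying $\Lambda_{\a}(\opn{F}_{\mrm{fin}}(Z,A))$ via Theorem \ref{thm:70}(1)) are precisely the details being left to the reader. Your remark about not commuting completion with infinite direct sums is a good observation of the one point where care is needed in part (3).
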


\begin{proof}
Combine Theorem \ref{thm:70} and Corollaries \ref{cor:100} and \ref{cor:51}
\end{proof}

For $k \in \N$ let us write $A_k := A / \a^{k+1}$.

\begin{cor} \label{cor:105}
Let $M$ be any $A$-module, and let $\what{M}$ be its $\a$-adic completion. 
\begin{enumerate}
\item The $A$-module $\what{M}$ is $\a$-adically complete.

\item For any $k \in \N$ the homomorphism 
$\tau_{M, k} :  A_k \ot_A M \to A_k \ot_A \what{M}$,
that is induced by $\tau_{M} :  M \to \what{M}$, is bijective.
\end{enumerate}
\end{cor}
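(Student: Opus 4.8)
The plan is to establish part \tup{(2)} first, after which part \tup{(1)} drops out by passing to the inverse limit over $k$. Throughout I identify $A_k \ot_A N$ with $N / \a^{k+1} N$ for every $A$-module $N$, so that $\tau_{M, k}$ is precisely the map $M / \a^{k+1} M \to \what{M} / \a^{k+1} \what{M}$ induced by $\tau_M$. The first observation is that injectivity is cheap and holds for every $M$. Let $\pi \colon \what{M} \to M / \a^{k+1} M$ be the canonical projection out of $\what{M} = \varprojlim_n M / \a^n M$. Since $\a^{k+1}$ annihilates the target, $\pi$ factors through a map $\bar{\pi} \colon \what{M} / \a^{k+1} \what{M} \to M / \a^{k+1} M$, and by construction $\pi \circ \tau_M$ is the canonical surjection $M \to M / \a^{k+1} M$. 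Hence $\bar{\pi} \circ \tau_{M, k} = \opn{id}$, so $\tau_{M, k}$ is a split monomorphism for every $M$. The whole content of \tup{(2)} is therefore the \emph{surjectivity} of $\tau_{M, k}$, equivalently the equality $\ker \pi = \a^{k+1} \what{M}$ (the inclusion $\a^{k+1} \what{M} \subseteq \ker \pi$ being obvious).

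The technical heart is the case of a (possibly infinite) free module $F = \opn{F}_{\mrm{fin}}(Z, A)$, where I would use the explicit description $\what{F} = \opn{F}_{\tup{dec}}(Z, \what{A})$ from Theorem \ref{thm:70}\tup{(1)}. The point is to identify $\a^{k+1} \what{F} = \what{\a}^{k+1} \what{F}$ with the set of all decaying functions $f$ whose values lie in $\what{\a}^{k+1}$. The nontrivial inclusion is that every such $f$ lies in $\what{\a}^{k+1} \what{F}$: since $A$ is noetherian I pick generators $b_1, \dots, b_s$ of $\what{\a}^{k+1}$, and the elementary identity $\what{\a}^N = \sum_l b_l \what{\a}^{\, N - k - 1}$ lets me write, for each $z \in Z$, an expansion $f(z) = \sum_l b_l g_l(z)$ with $\opn{ord}_{\a}(g_l(z)) \geq \opn{ord}_{\a}(f(z)) - (k+1)$. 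This order bound is exactly what forces each $g_l$ to be $\a$-adically decaying, whence $f = \sum_l b_l g_l \in \what{\a}^{k+1} \what{F}$. Combined with the remark that a decaying function takes values outside $\what{\a}^{k+1}$ on only finitely many $z$, this identifies $\what{F} / \a^{k+1} \what{F}$ with $\opn{F}_{\mrm{fin}}(Z, \what{A} / \what{\a}^{k+1})$, and via the classical isomorphism $A / \a^{k+1} \iso \what{A} / \what{\a}^{k+1}$ it shows that $\tau_{F, k}$ is bijective. I expect this order estimate to be the one genuinely delicate step.

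To pass to an arbitrary $M$, I would choose a surjection $\phi \colon F \surj M$ with $F$ free as above. Here I use the standard fact that $\a$-adic completion preserves surjections: writing $K = \ker \phi$, the inverse system $\bigl( K / (K \cap \a^n F) \bigr)_n$ has surjective transition maps, hence is Mittag--Leffler with vanishing $\varprojlim^1$, so the sequence of inverse limits gives that $\what{\phi} \colon \what{F} \surj \what{M}$ is onto. Applying the right exact functor $A_k \ot_A {-}$ together with the naturality of $\tau$ then produces a commutative square whose top row ($\phi$ mod $\a^{k+1}$) and bottom row ($\what{\phi}$ mod $\a^{k+1}$) are both surjective and whose left vertical arrow $\tau_{F, k}$ is bijective by the free case. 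A one-line diagram chase then forces $\tau_{M, k}$ to be surjective, and with the split injectivity from the first paragraph this yields that $\tau_{M, k}$ is bijective, proving \tup{(2)}.

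Finally, for part \tup{(1)}, the isomorphisms $\bar{\pi}$ of \tup{(2)} are compatible with the transition maps in $k$, so taking inverse limits gives
\[ \Lambda_{\a}(\what{M}) = \varprojlim_k \what{M} / \a^{k+1} \what{M} \iso \varprojlim_k M / \a^{k+1} M = \what{M} . \]
Tracing the construction shows that this isomorphism is inverse to $\tau_{\what{M}}$, so $\tau_{\what{M}}$ is bijective and $\what{M}$ is $\a$-adically complete, as required.
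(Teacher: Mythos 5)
Your proposal is correct, but it follows a genuinely different route from the paper. The paper proves this corollary purely by citation: part (1) is \cite[Corollary 3.5]{Ye2}, and for part (2) it extracts injectivity of $\tau_{M,k}$ from the proof of \cite[Theorem 1.2]{Ye2} and deduces surjectivity from the implication (i) $\Rightarrow$ (ii) of that theorem \emph{combined with part (1)}; in particular the paper's logic runs (1) before (2), whereas you prove (2) first and recover (1) by passing to the inverse limit over $k$ --- a legitimate and arguably cleaner reversal. In effect you have reproved the relevant pieces of \cite{Ye2} from scratch: your split-injectivity observation (the retraction $\bar{\pi}$) is exactly why injectivity holds without any finiteness hypothesis on the ideal, matching the paper's parenthetical remark; your order estimate identifying $\a^{k+1} \opn{F}_{\tup{dec}}(Z, \what{A})$ with the decaying functions valued in $\what{\a}^{\,k+1}$ is correct and is precisely where noetherianness (finite generation of $\a^{k+1}$) enters; and the Mittag--Leffler descent from the free case to arbitrary $M$ is standard. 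The paper's approach buys brevity; yours buys a self-contained argument that makes the role of the noetherian hypothesis visible. Two inputs in your argument should be flagged explicitly if you write it up: Theorem \ref{thm:70}(1) must be invoked compatibly with the maps from $\opn{F}_{\mrm{fin}}(Z, A)$, so that your computation really identifies the map $\tau_{F,k}$ and not merely the two quotient modules; and the ``classical isomorphism'' $A / \a^{k+1} \cong \what{A} / \what{\a}^{\,k+1}$ is a genuine external ingredient --- it is the case $M = A$ of the very statement being proved, and your free-module computation is vacuous for a singleton $Z$ --- though it is of course standard for noetherian rings (or for any finitely generated ideal).
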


\begin{proof}
(1) is \cite[Corollary 3.5]{Ye2}. As for (2): in the course of the proof of 
\cite[Theorem 1.2]{Ye2} it is shown that the homomorphism 
$\tau_{M, k}$ is an injection (this is true 
regardless if the ideal $\a$ is finitely generated); and the implication 
(i) $\Rightarrow$ (ii) of loc.\ cit., together with item (1) above, shows that 
$\tau_{M, k}$ is surjective. 
\end{proof}

In \cite[Section 0$_{\mrm{IV}}$.19.2]{EGA-IV}  the notion of {\em formally 
projective} module is defined. Here is what it means in our situation, 
where every $A$-module is given its $\a$-adic topology. An $A$-module $P$ is 
formally projective if for every 
$k$, the $A_k$-module $P_k := A_k \ot_A P$ is projective. 
The next results compare the two notions of projective modules. 

\begin{thm} \label{thm:108}
The following conditions are equivalent for an $\a$-adically complete 
$A$-module $P$~\tup{:}
\begin{enumerate}
\rmitem{i} $P$ is  $\a$-adically projective.
\rmitem{ii} $P$ is formally projective.
\end{enumerate}
\end{thm}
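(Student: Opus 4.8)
The plan is to prove the two implications separately, in both cases leaning on the structural description of Corollary \ref{cor:51} (an $A$-module is $\a$-adically projective iff it is a direct summand of an $\a$-adically free module) together with the base-change isomorphism of Corollary \ref{cor:105}(2).

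For (i) $\Rightarrow$ (ii) I would argue as follows. Suppose $P$ is $\a$-adically projective. By Corollary \ref{cor:51} it is a direct summand of an $\a$-adically free module $F = \opn{F}_{\tup{dec}}(Z, \what{A})$, and by Theorem \ref{thm:70}(1) this $F$ is the $\a$-adic completion of the free module $G := \opn{F}_{\mrm{fin}}(Z, A)$. Applying Corollary \ref{cor:105}(2) to $G$, the canonical map $A_k \ot_A G \to A_k \ot_A F$ is bijective for every $k$; since $A_k \ot_A G = \opn{F}_{\mrm{fin}}(Z, A_k)$ is $A_k$-free, each $F_k := A_k \ot_A F$ is a free $A_k$-module. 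Base-changing a splitting $F \cong P \oplus P'$ along $A_k \ot_A (-)$ exhibits $P_k := A_k \ot_A P$ as a direct summand of the free module $F_k$, so $P_k$ is $A_k$-projective. Hence $P$ is formally projective. This direction is essentially formal.

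For (ii) $\Rightarrow$ (i), the harder direction, suppose $P$ is $\a$-adically complete and formally projective. By Corollary \ref{cor:50}(2) choose a surjection $\pi : Q \surj P$ with $Q$ an $\a$-adically projective module. It suffices to produce a section $s : P \to Q$ of $\pi$: then $P$ is a direct summand of $Q$, which by Corollary \ref{cor:51} is itself a direct summand of an $\a$-adically free module, so $P$ is a direct summand of an $\a$-adically free module and hence $\a$-adically projective. To construct $s$ I would build a compatible family of $A_k$-linear sections $s_k : P_k \to Q_k$ of the reductions $\pi_k : Q_k \surj P_k$ and pass to the inverse limit: since $P = \lim_k P_k$ and $Q = \lim_k Q_k$ are $\a$-adically complete, and every $A$-map $P \to Q_k$ factors through $P_k$, one has $\opn{Hom}_A(P, Q) = \lim_k \opn{Hom}_{A_k}(P_k, Q_k)$, so a compatible family $(s_k)$ assembles into the desired $s$ (and $\pi s = \opn{id}_P$ holds because it holds at each level and $P$ is separated). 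A section $s_0$ exists since $P_0$ is $A_0$-projective. Given $s_k$, I would first lift the $A_{k+1}$-linear map $P_{k+1} \to P_k \xar{s_k} Q_k$ through the surjection $Q_{k+1} \surj Q_k$, using projectivity of $P_{k+1}$, to get $t : P_{k+1} \to Q_{k+1}$; then $\epsilon := \pi_{k+1} \circ t - \opn{id}_{P_{k+1}}$ has image in $\a^{k+1} P_{k+1}$, whence $\epsilon^2 = 0$ in $A_{k+1} = A / \a^{k+2}$, so $\opn{id} + \epsilon$ is invertible and $s_{k+1} := t \circ (\opn{id} + \epsilon)^{-1}$ is a section of $\pi_{k+1}$ reducing to $s_k$ modulo $\a^{k+1}$.

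The main obstacle is precisely the compatibility of the level-wise sections, that is, the vanishing of the relevant $\varprojlim^{1}$ obstruction for the inverse system of section-sets. The successive-approximation device above---correcting each naive lift $t$ by the nilpotent automorphism $\opn{id} + \epsilon$---is exactly what makes the transition maps between the sets of sections surjective, forcing the inverse limit to be non-empty. Everything else (base change of direct sums and of summands, and the identification of $\opn{Hom}_A(P, -)$ with $\lim_k \opn{Hom}_{A_k}(P_k, -)$ for complete $P$) is routine.
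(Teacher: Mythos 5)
Your proof is correct. For (ii) $\Rightarrow$ (i) it follows the same skeleton as the paper's proof: present $P$ as a quotient of an $\a$-adically projective (in the paper, $\a$-adically free) module, split that surjection by building an inverse system of $A_k$-linear splittings of its reductions modulo $\a^{k+1}$, using projectivity of each $P_k$, and pass to the inverse limit. The difference is in how the inductive step is executed. The paper first chooses an arbitrary splitting $\beta'_{k+1}$ at level $k+1$ and then restores compatibility with $\beta_k$ by lifting the discrepancy $\chi$ into the kernel $L_{k+1} = \opn{Ker}(\alpha_{k+1})$, which requires observing that $L_{k+1} \to L_k$ is surjective (right-exactness of $A_k \ot_{A_{k+1}} -$) and uses projectivity of $P_{k+1}$ twice per step. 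You do it in the dual order: first lift $s_k$ to a map $t$ compatible with the tower (one use of projectivity), then restore sectionhood by the square-zero correction $\epsilon := \pi_{k+1} \circ t - \bsym{1}_{P_{k+1}}$, $s_{k+1} := t \circ (\bsym{1} + \epsilon)^{-1}$; this avoids the kernel analysis entirely and is a slightly cleaner device. Your explicit treatment of the limit --- the identification $\opn{Hom}_A(P, Q) \cong \lim_{\leftarrow k} \opn{Hom}_{A_k}(P_k, Q_k)$ for complete $P$, $Q$, and the use of separatedness of $P$ to conclude $\pi \circ s = \bsym{1}_P$ --- spells out what the paper leaves implicit. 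For (i) $\Rightarrow$ (ii) the paper's argument is the one-line observation that every $A_k$-module is $\a$-adically complete, so the lifting property of Definition \ref{dfn:4}(ii) applies directly to surjections of $A_k$-modules; your route via Corollary \ref{cor:51}, Theorem \ref{thm:70}(1) and the base-change isomorphism of Corollary \ref{cor:105}(2) is also valid, just longer.
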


\begin{proof}
(i) $\Rightarrow$ (ii): This is trivial, since any $A_k$-module is an 
$\a$-adically complete $A$-module. 

\medskip \noindent
(ii) $\Rightarrow$ (i):
According to Corollary \ref{cor:100} there is a surjection 
$\alpha : \opn{F}_{\mrm{dec}}(Z, \what{A}) \to P$ for some set $Z$. We will 
prove that $\alpha$ is split; and then by Corollary \ref{cor:51} the 
$A$-module $P$ is $\a$-adically projective.

To obtain a splitting $\beta$ of $\alpha$ we will construct an inverse system 
of splittings $\beta_k : P_k \to \opn{F}_{\mrm{fin}}(Z, A_k)$ of the 
surjections $\alpha_k : \opn{F}_{\mrm{fin}}(Z, A_k) \to P_k$ induced by 
$\alpha$. For $k = 0$ this is easy, since $P_0$ is a projective $A_0$-module. 

Now suppose we have a compatible system of splittings 
$\beta_0, \ldots, \beta_k$. Let us choose any splitting 
$\beta'_{k+1} : P_{k+1} \to \opn{F}_{\mrm{fin}}(Z, A_{k+1})$.
Write 
$L_{k+1} := \opn{Ker}(\alpha_{k+1})$ and 
$L_{k} := \opn{Ker}(\alpha_k)$.
Consider the commutative diagram (solid arrows) 
\[ \UseTips \xymatrix @C=7ex @R=6ex {
0 
\ar[r]
&
L_{k+1}
\ar[r]
\ar[d]_{ \phi_{k+1, k} }
&
\opn{F}_{\mrm{fin}}(Z, A_{k+1})
\ar[r]^(0.6){\alpha_{k+1}}
\ar[d]_{ \theta^{\opn{F}}_{k+1, k} }
&
P_{k+1}
\ar[r]
\ar[d]_{ \theta^{P}_{k+1, k} }
\ar@(ul,ur)@{-->}[l]_{ \beta'_{k+1} }
&
0
\\
0 
\ar[r]
&
L_{k}
\ar[r]
&
\opn{F}_{\mrm{fin}}(Z, A_{k})
\ar[r]^(0.6){\alpha_{k}}
&
P_{k}
\ar[r]
\ar@(dl,dr)@{-->}[l]^{ \beta_k }
&
0
} \]
with exact rows, where $\theta^{P}_{k+1, k}$ and $\theta^{\opn{F}}_{k+1, k}$ 
are the canonical surjections induced by the ring surjection
$\theta_{k+1, k} : A_{k+1} \to A_k$, and $\phi_{k+1, k}$ is the unique 
homomorphism that agrees with the rest. There is a homomorphism 
$\chi : P_{k+1} \to \opn{F}_{\mrm{fin}}(Z, A_{k})$, 
$\chi := \beta_k \circ \theta^{P}_{k+1, k} - \theta^{\opn{F}}_{k+1, k} \circ 
\beta'_{k+1}$.
But $\alpha_k \circ \chi = 0$, and hence 
$\chi : P_{k+1} \to L_k$. 

Because the homomorphism $\alpha_k$ is gotten from $\alpha_{k+1}$ by applying 
the right exact functor $A_k \ot_{A_{k+1}} -$, it follows that 
$\phi_{k+1, k} : L_{k+1} \to L_k$ is surjective.
Since $P_{k+1}$ is a projective 
$A_{k+1}$-module, there is a homomorphism 
$\til{\chi} : P_{k+1} \to L_{k+1}$ lifting $\chi$, 
namely $\phi_{k+1, k} \circ \til{\chi} = \chi$. 
The homomorphism 
$\beta_{k+1} : P_{k+1} \to \opn{F}_{\mrm{fin}}(Z, A_{k+1})$,
$\beta_{k+1} := \beta'_{k+1} - \til{\chi}$,
is a splitting of $\alpha_{k+1}$ that's compatible with $\beta_k$.
\end{proof}

\begin{cor}
An $A$-module $P$ is formally projective iff its $\a$-adic completion 
$\what{P}$ is $\a$-adically projective.
\end{cor}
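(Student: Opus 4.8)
The plan is to reduce everything to Theorem~\ref{thm:108} applied to the completed module $\what{P}$, which is legitimate precisely because $\what{P}$ is $\a$-adically complete by Corollary~\ref{cor:105}(1). That theorem then gives the equivalence of ``$\what{P}$ is $\a$-adically projective'' with ``$\what{P}$ is formally projective''. Hence the corollary will follow once I show that $P$ is formally projective if and only if $\what{P}$ is formally projective.

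To establish this last equivalence, I would unwind the definition: formal projectivity of a module $N$ says precisely that each $A_k$-module $N_k := A_k \ot_A N$ is projective, for every $k \in \N$. So the task is to compare the reductions $P_k$ and $\what{P}_k$. This is exactly where Corollary~\ref{cor:105}(2) enters: the canonical homomorphism $\tau_{P, k} : A_k \ot_A P \to A_k \ot_A \what{P}$ is bijective, so $P_k \cong \what{P}_k$ as $A_k$-modules. Consequently $P_k$ is a projective $A_k$-module if and only if $\what{P}_k$ is, and letting $k$ range over all of $\N$ yields that $P$ is formally projective iff $\what{P}$ is.

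Chaining the two equivalences gives the claim in both directions. I do not anticipate a genuine obstacle here; the only points requiring care are making sure Theorem~\ref{thm:108} is invoked on $\what{P}$ (which is complete) rather than on $P$ (which need not be), and that the isomorphism of Corollary~\ref{cor:105}(2) is $A_k$-linear, so that it actually transports projectivity of the finite-level reductions. Both are immediate from the cited statements, so the proof is a short assembly rather than a computation.
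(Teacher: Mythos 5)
Your proof is correct and follows essentially the same route as the paper's: the paper's (very terse) proof likewise combines Corollary~\ref{cor:105}(1) (so that Theorem~\ref{thm:108} may be applied to $\what{P}$) with Corollary~\ref{cor:105}(2) (identifying $A_k \ot_A P \cong A_k \ot_A \what{P}$, hence the formal projectivity of $P$ and of $\what{P}$ coincide). Your write-up simply makes explicit the chaining of the two equivalences that the paper leaves implicit.
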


\begin{proof}
By Corollary \ref{cor:105} the $A$-module $\what{P}$ is $\a$-adically complete,
and $A_k \ot_A P \cong A_k \ot_A \what{P}$ for every $k$.
Now use Theorem \ref{thm:108}.
\end{proof}

Let $M = \bigoplus_{i \in \Z} M^i$ be a graded $A$-module. We define
\begin{equation}\label{eqn:101}
\inf(M) := \inf\{i \left| \right. M^i\ne 0\} \in \Z \cup \{\pm \infty \}
\end{equation}
and
\begin{equation}\label{eqn:102}
\sup(M) := \sup\{i \left| \right. M^i\ne 0\} \in \Z \cup \{\pm \infty \} .
\end{equation}
The amplitude of $M$ is
\begin{equation}\label{eqn:103}
\opn{amp}(M) := \sup(M) - \inf(M) \in \N \cup \{\pm \infty \}.
\end{equation}
(For $M=0$ this reads $\inf(M) = \infty$, $\sup(M) = -\infty$ and
$\opn{amp}(M) = -\infty$.) Thus $M$ is bounded iff $\opn{amp}(M) < \infty$.
For
$M \in \cat{D}(\cat{Mod} A)$ we write
$\mrm{H}(M) :=  \bigoplus_{i \in \Z} \mrm{H}^i(M)$.

\begin{thm} \label{thm:280}
The following conditions are equivalent for $M \in \cat{D}(\cat{Mod} A)^-$.
\begin{enumerate}
\rmitem{i} $M$ is $\a$-adically cohomologically complete.

\rmitem{ii} There is an isomorphism $P \cong M$ in
$\cat{D}(\cat{Mod} A)$, where $P$ is a complex of $\a$-adically
free modules, and $\opn{sup}(P) = \opn{sup}(\mrm{H}(M))$.

\rmitem{iii} There is an isomorphism $P \cong M$ in
$\cat{D}(\cat{Mod} A)$, where $P$ is a bounded above complex of $\a$-adically
projective modules.
\end{enumerate}
\end{thm}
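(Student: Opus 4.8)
The plan is to prove the cyclic chain of implications $\text{(ii)} \Rightarrow \text{(iii)} \Rightarrow \text{(i)} \Rightarrow \text{(ii)}$, since the first two are elementary and the content is concentrated in the last. The implication $\text{(ii)} \Rightarrow \text{(iii)}$ is immediate: by Corollary \ref{cor:50}(1) any $\a$-adically free module is flat (indeed it is $\a$-adically projective, being $\opn{F}_{\tup{dec}}(Z, \what{A})$ itself), so a complex of $\a$-adically free modules is in particular a bounded above complex of $\a$-adically projective modules. For $\text{(iii)} \Rightarrow \text{(i)}$, I would argue that a bounded above complex $P$ of $\a$-adically projective modules is cohomologically complete. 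Since each term is flat by Corollary \ref{cor:50}(1) and $\mrm{L}\La_{\a}$ has finite cohomological dimension, $P$ is already a complex adapted to computing $\mrm{L}\La_{\a}$ termwise; one then checks that the canonical map $P \to \La_{\a}(P)$ is a quasi-isomorphism. The key point is that for $\a$-adically projective $P^i$ the completion map $P^i \to \La_{\a}(P^i)$ is an isomorphism (property (i) of Definition \ref{dfn:4}), so $\La_{\a}(P) = P$ termwise, and hence $\mrm{L}\La_{\a}(M) \cong P \cong M$.

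The substantive implication is $\text{(i)} \Rightarrow \text{(ii)}$, and here is where I expect the main obstacle. Starting from a cohomologically complete $M$ with $\sup(\mrm{H}(M)) = s < \infty$, I want to build a complex $P$ of $\a$-adically free modules, concentrated in degrees $\le s$, with $P \cong M$ in $\cat{D}(\cat{Mod} A)$. The natural strategy is to construct $P$ by a descending induction on degree, building a free-type resolution from the top degree $s$ downward. At each stage I would use Corollary \ref{cor:100}, which says every $\a$-adically complete module is a quotient of an $\a$-adically free module, to surject an $\a$-adically free module onto the relevant cohomology or cocycle module, then continue resolving the kernel. The delicate issue is that the modules appearing at each stage (cohomologies, cocycles) are not a priori $\a$-adically complete — this is precisely the phenomenon warned about in Example \ref{exa:1} — so Corollary \ref{cor:100} cannot be applied naively to them.

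The hard part will therefore be controlling completeness throughout the inductive construction so that Corollary \ref{cor:100} remains applicable, and simultaneously pinning down the top degree of $P$ to be exactly $\sup(\mrm{H}(M))$ rather than something larger. I would address the completeness issue by working not with the naive cohomology modules but by exploiting that $M$ itself is cohomologically complete: the key leverage is that $\mrm{R}\Ga_{\a}$ and $\mrm{L}\La_{\a}$ have finite cohomological dimension (say dimension $d$), so truncations of $M$ and the modules produced by the resolution stay within a controlled range and inherit completeness properties from $M$. Concretely, I expect to first produce some bounded above complex of $\a$-adically free modules quasi-isomorphic to $M$ by a standard resolution, giving a weak form of (ii) without the sharp bound on $\sup(P)$, and then to improve the top degree. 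The sharpening to $\sup(P) = \sup(\mrm{H}(M))$ is achieved by a truncation argument: one truncates the resolution at degree $s$ and must verify that the truncated complex is still a complex of $\a$-adically free (or at least $\a$-adically projective) modules and still represents $M$. Making the truncated top term $\a$-adically free, rather than merely a cokernel, is the technical crux — it requires that the relevant cocycle module in degree $s$ be $\a$-adically complete so that Corollary \ref{cor:100} applies, which is where the hypothesis that $M$ is cohomologically complete (not just that it has complete cohomology) does the essential work.
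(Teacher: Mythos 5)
Your implications (ii) $\Rightarrow$ (iii) and (iii) $\Rightarrow$ (i) are correct and essentially identical to the paper's: for the latter, a bounded above complex $P$ of $\a$-adically projective modules is K-flat by Corollary \ref{cor:50}(1), so $\xi_P : \mrm{L}\Lambda_{\a}(P) \to \Lambda_{\a}(P)$ is an isomorphism, and since each $P^i$ is complete, $\tau_P : P \to \Lambda_{\a}(P)$ is an isomorphism of complexes, whence $\tau^{\mrm{L}}_P$ is an isomorphism.

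The implication (i) $\Rightarrow$ (ii), however, is not proved; what you offer is a strategy together with an accurate description of why that strategy breaks, but no resolution of the difficulty. Your plan is to build $P$ degree by degree, surjecting $\a$-adically free modules onto cocycle or cohomology modules via Corollary \ref{cor:100}. As you yourself note, that corollary requires the target module to be $\a$-adically complete, and the cocycle and cohomology modules of a cohomologically complete complex need not be complete (this is exactly Example \ref{exa:1}). Your proposed repairs do not close this gap: appealing to the finite cohomological dimension of $\mrm{R}\Gamma_{\a}$ and $\mrm{L}\Lambda_{\a}$ gives no completeness statement about cocycles of a resolution, and your fallback of ``first produce some bounded above complex of $\a$-adically free modules quasi-isomorphic to $M$ by a standard resolution'' begs the question --- there is no standard resolution by $\a$-adically free (or projective) modules, and producing \emph{any} such complex quasi-isomorphic to $M$ is precisely the content of (i) $\Rightarrow$ (ii) (even the ``weak form'' without the bound on $\sup$). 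So the essential implication never gets started.

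The missing idea, which is the paper's proof, avoids all degree-by-degree constructions: choose an ordinary free resolution $Q \to M$ (bounded above complex of free $A$-modules, $\sup(Q) = \sup(\mrm{H}(M))$ --- this \emph{is} standard), and apply the underived functor $\Lambda_{\a}$ to the whole complex, setting $P := \Lambda_{\a}(Q)$. By Theorem \ref{thm:70}(1), $P$ is a complex of $\a$-adically free modules with $\sup(P) = \sup(Q)$, so the sharp bound on $\sup$ comes for free, with no truncation argument. Since $Q$ is K-projective, $\xi_Q : \mrm{L}\Lambda_{\a}(Q) \to \Lambda_{\a}(Q) = P$ is an isomorphism; and since $Q \cong M$ is cohomologically complete, $\tau^{\mrm{L}}_Q : Q \to \mrm{L}\Lambda_{\a}(Q)$ is an isomorphism. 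Composing, $\tau_Q : Q \to P$ is a quasi-isomorphism, so $M \cong Q \cong P$ in $\cat{D}(\cat{Mod} A)$. Note how the hypothesis (i) is used exactly once, to guarantee that completing the resolution does not change its class in the derived category --- this is the leverage your sketch was looking for but did not find.
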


Before the proof, let us recall some notation from \cite{PSY1}. 
For $M \in \cat{D}(\cat{Mod} A)$, there is a canonical homomorphism 
$\tau_M : M \to \Lambda_{\a}(M)$ in $\cat{C}(\cat{Mod} A)$, and there are 
canonical morphisms 
$\tau^{\mrm{L}}_M : M \to \mrm{L}\Lambda_{\a}(M)$ and 
$\xi_M : \mrm{L} \Lambda_{\a} (M) \to \Lambda_{\a}(M)$ in 
$\cat{D}(\cat{Mod} A)$. These are related by 
$\tau_M = \xi_M \circ \tau^{\mrm{L}}_M$. By definition, $M$ is cohomologically
complete if $\tau^{\mrm{L}}_M$ is an isomorphism.

\begin{proof}
(i) $\Rightarrow$ (ii): We assume that $M$ is $\a$-adically
cohomologically complete and nonzero.
Choose a free resolution $Q \to M$
in $\cat{C}(\cat{Mod} A)$, i.e.\  a quasi-isomorphism where $Q$ is a bounded 
above  complex of 
free modules, such that
$\opn{sup}(Q) = \lb \opn{sup}(\mrm{H}(M))$. This is standard.
Let $P := \Lambda_{\a} (Q)$, which is  a complex of $\a$-adically
free modules, and $\opn{sup}(P) = \opn{sup}(Q)$.
Because $Q \cong M$ in
$\cat{D}(\cat{Mod} A)$, $Q$ is also $\a$-adically cohomologically
complete, so
$\tau^{\mrm{L}}_Q : Q \to \mrm{L} \Lambda_{\a} (Q)$
is an isomorphism in $\cat{D}(\cat{Mod} A)$. But $Q$ is K-projective, so
$\mrm{L} \Lambda_{\a} (Q) \cong \Lambda_{\a} (Q) = P$.
(This in fact proves that $\tau_Q : Q \to P$ is a quasi-isomorphism!)
We conclude that $M \cong P$ in $\cat{D}(\cat{Mod} A)$.

\medskip \noindent
(ii) $\Rightarrow$ (iii):
This is trivial.

\medskip \noindent
(iii) $\Rightarrow$ (i):
Let $P$ be a bounded above complex of $\a$-adically
projective modules. The idempotence of completion
(see \cite[Corollary 3.6]{Ye2}) implies that
$\tau_P : P \to \Lambda_{\a} (P)$ is an isomorphism in
$\cat{C}(\cat{Mod} A)$.
According to Corollary \ref{cor:50}(1) the complex $P$
is K-flat; therefore $\xi_P : \mrm{L} \Lambda_{\a} (P) \to \Lambda_{\a} (P)$
is an isomorphism  in $\cat{D}(\cat{Mod} A)$.
It follows that $\tau^{\mrm{L}}_P = (\xi_P)^{-1} \circ \tau_P$
is an isomorphism in $\cat{D}(\cat{Mod} A)$. So $P$ is cohomologically complete, 
and hence, so is $M$.
\end{proof}

For any $M$ we denote by $\bsym{1}_M$ the identity automorphism of $M$.

\begin{lem} \label{lem:40}
Let $N$ be an $\a$-adically complete $A$-module, and let $M$ be any
$A$-module. Then the homomorphism
\[  \opn{Hom}(\tau_M, \bsym{1}_N) : \opn{Hom}_A( \Lambda_{\a} (M) , N) \to
\opn{Hom}_A(M , N) \]
induced by $\tau_M$ is bijective.
\end{lem}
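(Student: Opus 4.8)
The statement is precisely the universal property of $\a$-adic completion relative to complete modules: $\tau_M$ should be the initial map from $M$ into an $\a$-adically complete module, so that precomposition with it is a bijection onto $\opn{Hom}_A(M, N)$ whenever the target $N$ is complete. The plan is to prove injectivity and surjectivity of $\opn{Hom}(\tau_M, \bsym{1}_N)$ separately, the only inputs being Corollary \ref{cor:105} and the fact that a complete module is separated.

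For injectivity, suppose $g : \Lambda_{\a}(M) \to N$ satisfies $g \circ \tau_M = 0$; I want $g = 0$. By Corollary \ref{cor:105}(2) the induced map $A_k \ot_A M \to A_k \ot_A \Lambda_{\a}(M)$ is bijective for every $k$, which says precisely that $\Lambda_{\a}(M) = \tau_M(M) + \a^{k+1} \Lambda_{\a}(M)$. Hence for any $x \in \Lambda_{\a}(M)$ and any $k$ I can write $x = \tau_M(m) + y$ with $y \in \a^{k+1} \Lambda_{\a}(M)$; applying $g$ and using $g(\tau_M(m)) = 0$ gives $g(x) = g(y) \in \a^{k+1} N$, since any $A$-linear map automatically carries $\a^{k+1} \Lambda_{\a}(M)$ into $\a^{k+1} N$ (so no continuity hypothesis on $g$ is needed). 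Therefore $g(x) \in \bigcap_{k} \a^{k+1} N$. As $N$ is $\a$-adically complete the map $\tau_N$ is injective, so this intersection is $0$ and $g(x) = 0$.

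For surjectivity, take $f : M \to N$. Applying $A_k \ot_A (-)$ yields a compatible system $f_k : M / \a^{k+1} M \to N / \a^{k+1} N$, well-defined because $f(\a^{k+1} M) \subseteq \a^{k+1} N$, and hence a homomorphism $\lim_{\leftarrow k} f_k : \Lambda_{\a}(M) \to \Lambda_{\a}(N)$. Composing with the inverse of the isomorphism $\tau_N : N \iso \Lambda_{\a}(N)$ produces $\til{f} := \tau_N^{-1} \circ (\lim_{\leftarrow k} f_k) : \Lambda_{\a}(M) \to N$. A direct check on elements shows $(\lim_{\leftarrow k} f_k) \circ \tau_M = \tau_N \circ f$, whence $\til{f} \circ \tau_M = f$; thus $f$ lies in the image of $\opn{Hom}(\tau_M, \bsym{1}_N)$.

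I expect no serious obstacle, as the lemma is a form of the universal property of completion. The one point requiring care is that $N$ is assumed only complete, not finitely generated, and $M$ is arbitrary, so I cannot appeal to flatness or exactness of $\Lambda_{\a}$; the argument rests entirely on Corollary \ref{cor:105}(2), which is valid for arbitrary $M$, together with the separatedness of $N$. It is worth emphasizing that the density identity $\Lambda_{\a}(M) = \tau_M(M) + \a^{k+1} \Lambda_{\a}(M)$ is exactly what allows the injectivity step to bypass any continuity assumption on the $A$-linear map $g$.
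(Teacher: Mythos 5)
Your proof is correct, and it splits into one half that matches the paper and one half that is genuinely different. Your surjectivity argument is exactly the paper's entire proof: the paper defines the operation $\phi \mapsto \tau_N^{-1} \circ \Lambda_{\a}(\phi)$ and asserts that it is a two-sided inverse of $\opn{Hom}(\tau_M, \bsym{1}_N)$; your $\til{f}$ is precisely this operation, and your naturality check $\Lambda_{\a}(f) \circ \tau_M = \tau_N \circ f$ verifies one of the two compositions. Where you diverge is injectivity. The paper obtains it from the other composition, namely $\tau_N^{-1} \circ \Lambda_{\a}(g \circ \tau_M) = g$ for $g : \Lambda_{\a}(M) \to N$, which, once unwound, requires the identity $\Lambda_{\a}(\tau_M) = \tau_{\Lambda_{\a}(M)}$ as maps into the double completion -- an idempotence-type fact left implicit in the paper (it holds in the noetherian setting by \cite{Ye2}). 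You bypass that subtlety entirely: the density statement $\Lambda_{\a}(M) = \tau_M(M) + \a^{k+1} \Lambda_{\a}(M)$, which indeed follows from the surjectivity half of Corollary \ref{cor:105}(2), combined with $A$-linearity (forcing $g(\a^{k+1} \Lambda_{\a}(M)) \subseteq \a^{k+1} N$) and the separatedness of the complete module $N$ (injectivity of $\tau_N$ gives $\bigcap_k \a^{k+1} N = 0$), yields $g = 0$ directly. Your route is slightly longer but more self-contained at the delicate point; the paper's is a one-liner but leans on functorial identities for $\Lambda_{\a}$ that deserve the justification you in effect supply.
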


\begin{proof}
Given $\phi : M \to N$ consider the homomorphism
\[  \tau_N^{-1} \circ \Lambda_{\a}(\phi) :  \Lambda_{\a} (M) \to  N . \]
This operation is inverse to $\opn{Hom}(\tau_M, \bsym{1}_N)$.
Hence $\opn{Hom}(\tau_M, \bsym{1}_N)$ is bijective.
\end{proof}

\begin{lem} \label{lem:33}
\begin{enumerate}
\item Let
$0 \to P' \to P \to P'' \to 0$
be an exact sequence, with $P$ and $P''$ $\a$-adically projective
modules. Then this sequence is split, and $P'$ is also $\a$-adically projective.

\item Let $P$ be an acyclic bounded above complex of $\a$-adically projective
modules. Then $P$ is null-homotopic.

\item Let $P$ and $Q$ be bounded above complexes of $\a$-adically projective
modules,
and let $\phi : P \to Q$ be a quasi-isomorphism. Then $\phi$ is a homotopy
equivalence.
\end{enumerate}
\end{lem}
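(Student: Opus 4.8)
Lemma \ref{lem:33} has three parts. Part (1) is a splitting result for short exact sequences of $\a$-adically projective modules; parts (2) and (3) are the standard "acyclic bounded-above complex of projectives is null-homotopic" and "quasi-isomorphism between bounded-above complexes of projectives is a homotopy equivalence" results, transplanted to the $\a$-adically projective setting. The key point is that I should mimic the classical homological algebra of projective modules, but working inside the additive category $\cat{AdPr}(A,\a)$ rather than inside $\cat{Mod} A$. Let me sketch each part.

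**Part (1).** The plan is to reduce to the defining lifting property (Definition \ref{dfn:4}(ii)). Given $0 \to P' \to P \xrightarrow{\pi} P'' \to 0$, I would first note that all three modules are $\a$-adically complete: $P$ and $P''$ are by hypothesis, and $P'$ will be seen to be complete once the sequence splits (alternatively, one can check that the kernel of a surjection of complete modules is complete, or simply get completeness of $P'$ for free from it being a direct summand of $P$). The surjection $\pi : P \to P''$ is a surjection of $\a$-adically complete modules, so by the lifting property applied to $\psi = \bsym{1}_{P''} : P'' \to P''$, there is a section $\til{\psi} : P'' \to P$ with $\pi \circ \til{\psi} = \bsym{1}_{P''}$. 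Hence the sequence splits, giving $P \cong P' \oplus P''$. Then $P'$ is a direct summand of the $\a$-adically projective module $P$, so by Corollary \ref{cor:51} (direct summand of $\a$-adically free is $\a$-adically projective) $P'$ is itself $\a$-adically projective. The only subtlety is confirming $P''$ is genuinely a quotient in the category of complete modules so that the lifting property applies verbatim — but that is exactly the hypothesis.

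**Parts (2) and (3).** These are formal consequences of part (1) together with the lifting property, run as the usual induction. For part (2), let $P$ be acyclic, bounded above, with top degree $n$, so $P^i = 0$ for $i > n$. Since $P$ is acyclic, each boundary map factors through its image, and I would build the contracting homotopy degree by degree, descending from the top. At each stage I must split a short exact sequence $0 \to Z^i \to P^i \to B^{i+1} \to 0$ (cycles and boundaries), where acyclicity gives $Z^i = B^i$; part (1) guarantees these splittings live inside $\cat{AdPr}(A,\a)$ because the relevant kernels are again $\a$-adically projective. Assembling the splittings produces the null-homotopy. Part (3) then follows from part (2) by the standard mapping-cone argument: the cone $\opn{Cone}(\phi)$ of a quasi-isomorphism $\phi : P \to Q$ is an acyclic bounded-above complex of $\a$-adically projective modules (a termwise direct sum of terms from $P[1]$ and $Q$, each $\a$-adically projective since $\cat{AdPr}(A,\a)$ is closed under finite direct sums), so by part (2) it is null-homotopic, and a standard lemma translates a null-homotopic cone into a homotopy equivalence $\phi$.

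**Main obstacle.** The genuinely delicate point is making sure the inductive splittings in part (2) stay inside the category of $\a$-adically projective (equivalently, $\a$-adically complete) modules, since the category $\cat{AdPr}(A,\a)$ is not abelian and one cannot freely take kernels and cokernels in $\cat{Mod} A$ and expect to remain complete. The role of part (1) is precisely to supply the needed closure: it certifies both that a splitting exists and that the complementary summand is again $\a$-adically projective, which is what lets the induction propagate. Once part (1) is in hand, parts (2) and (3) are purely formal and follow the classical projective-resolution arguments without change; the completeness bookkeeping is handled entirely by part (1) and Corollary \ref{cor:51}.
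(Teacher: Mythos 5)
Your proposal is correct and follows essentially the same route as the paper: part (1) by applying the lifting property of Definition \ref{dfn:4}(ii) to split the surjection and then recognizing $P'$ as a direct summand, part (2) by the classical contracting-homotopy induction with part (1) supplying the splittings inside $\cat{AdPr}(A,\a)$, and part (3) by the mapping-cone argument. No gaps; the paper's proof is just a terser version of the same argument.
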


\begin{proof}
(1) Since both $P$ and $P''$ are complete, the sequence is split by property
(ii) of Definition \ref{dfn:4}. And it is easy to see that a direct summand of
an $\a$-adically projective module is also $\a$-adically projective.

\medskip \noindent
(2) This is like the usual proof for a complex of projectives, but using part
(1) above. Cf.\ \cite[Lemma 10.4.6]{We}.

\medskip \noindent
(3) Let $L := \opn{cone}(\phi)$, the mapping cone.
This is an acyclic bounded above complex of $\a$-adically projective modules.
By part (2) the complex $L$ is null-homotopic; and hence $\phi$ is a homotopy
equivalence.
\end{proof}

\begin{lem} \label{lem:36}
Let $P$ be a bounded above complex of $\a$-adically projective modules,
and let $M$ be a complex of $\a$-adically complete modules. Then the canonical
morphism
\[ \xi_{P, M} : \opn{Hom}_A(P, M) \to \opn{RHom}_A(P, M) \]
in $\cat{D}(\cat{Mod} A)$ is an isomorphism.
\end{lem}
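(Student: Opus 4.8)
The plan is to reduce the statement to the fact that $\operatorname{RHom}_A(P, -)$ can be computed using a K-injective resolution of $M$, and to show that the canonical comparison morphism $\xi_{P,M}$ becomes an isomorphism because $P$ is a bounded above complex of $\a$-adically projective modules. First I would pick a K-injective resolution $M \to I$ in $\cat{C}(\cat{Mod} A)$, so that by definition $\operatorname{RHom}_A(P, M) \cong \operatorname{Hom}_A(P, I)$, and the morphism $\xi_{P,M}$ is represented by the map $\operatorname{Hom}_A(P, M) \to \operatorname{Hom}_A(P, I)$ induced by $M \to I$. To prove this is an isomorphism in $\cat{D}(\cat{Mod} A)$ it suffices to prove it is a quasi-isomorphism of complexes.

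The key technical reduction is to a single adically projective module. Using the standard truncation/brutal-filtration argument for the bounded above complex $P$ (writing $P$ as a homotopy limit or as the union of its brutal truncations $\sigma^{\geq n} P$, each of which is built from finitely many $\a$-adically projective modules $P^i$), the problem reduces to showing: for a single $\a$-adically projective module $P^i$ and the quasi-isomorphism $M \to I$, the induced map $\operatorname{Hom}_A(P^i, M) \to \operatorname{Hom}_A(P^i, I)$ is a quasi-isomorphism. Here I would use that $\operatorname{Hom}_A(P^i, -)$ is exact on $\a$-adically complete modules: indeed by Corollary \ref{cor:51} an $\a$-adically projective module is a direct summand of an $\a$-adically free module, and the defining lifting property (Definition \ref{dfn:4}(ii)) exactly says $\operatorname{Hom}_A(P^i, -)$ preserves surjections of $\a$-adically complete modules, hence is exact on short exact sequences of complete modules.

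The main obstacle, and where care is required, is that the terms $I^j$ of the K-injective resolution need not be $\a$-adically complete, so I cannot directly apply the exactness of $\operatorname{Hom}_A(P^i, -)$ to them. The plan to circumvent this is to replace $I$ by a resolution through complete modules: since $M$ is a complex of $\a$-adically complete modules and the functor $\operatorname{Hom}_A(P, -)$ on complete modules is what matters, I would instead compare $\operatorname{Hom}_A(P, M)$ and $\operatorname{RHom}_A(P, M)$ by first reducing modulo $\a^{k+1}$. Concretely, one observes that $\operatorname{Hom}_A(P^i, N) \cong \varprojlim_k \operatorname{Hom}_{A_k}(A_k \ot_A P^i, A_k \ot_A N)$ for complete $N$, using that $P^i$ is complete and adically free (Theorem \ref{thm:70}) so that homomorphisms out of it are determined by their reductions; combined with the fact that each $A_k \ot_A P^i$ is a projective $A_k$-module (Theorem \ref{thm:108}), the functor $\operatorname{Hom}_{A_k}(A_k \ot_A P^i, -)$ is exact. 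Taking the inverse limit over $k$, with the decay condition guaranteeing the relevant $\varprojlim^1$ terms vanish, then yields that $\operatorname{Hom}_A(P, M) \to \operatorname{RHom}_A(P, M)$ is a quasi-isomorphism. Throughout, Lemma \ref{lem:40} and Lemma \ref{lem:33}(3) supply the needed compatibilities, ensuring the construction is independent of the choices of resolutions.
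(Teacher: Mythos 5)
Your proposal has a genuine gap at its final, decisive step. The reduction to a single $\a$-adically projective module $P^i$ (brutal truncations plus Milnor sequences) is workable, and you correctly identify the real obstacle: the terms of a K-injective resolution $I$ of $M$ are not $\a$-adically complete, so exactness of $\opn{Hom}_A(P^i,-)$ on complete modules cannot be applied to $M \to I$. But the circumvention you propose does not overcome this obstacle. The isomorphism $\opn{Hom}_A(P^i,N) \cong \varprojlim_k \opn{Hom}_{A_k}(A_k \ot_A P^i, A_k \ot_A N)$, together with projectivity of $A_k \ot_A P^i$ over $A_k$ (Theorem \ref{thm:108}), only re-expresses the \emph{source} of the map $\xi_{P,M}$; nothing in your argument relates this inverse limit to the \emph{target} $\opn{RHom}_A(P,M)$, which is computed using the non-complete resolution $I$. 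The closing sentence ``taking the inverse limit over $k$ \dots then yields that $\opn{Hom}_A(P,M) \to \opn{RHom}_A(P,M)$ is a quasi-isomorphism'' is a non sequitur: that comparison is exactly the content of the lemma, and it is never performed. A smaller but real error: the $\varprojlim^1$ vanishing you need is a Mittag-Leffler statement coming from degreewise \emph{surjectivity} of the transition maps in the towers $\{A_k \ot_A M\}_k$ and $\{\opn{Hom}_{A_k}(A_k \ot_A P, A_k \ot_A M)\}_k$; it has nothing to do with the decay condition on adically free modules.

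Your strategy can in fact be completed, but the missing bridge is substantial and requires precisely the non-formal input you never invoke: flatness of $\a$-adically projective modules (Corollary \ref{cor:50}(1)), hence K-flatness of $P$. With it, $A_k \ot_A P \cong A_k \ot^{\mrm{L}}_A P$, so derived base change gives $\opn{RHom}_{A_k}(A_k \ot_A P, A_k \ot_A M) \cong \opn{RHom}_A(P, A_k \ot_A M)$; one then needs $M \cong \mrm{R}\varprojlim_k (A_k \ot_A M)$ in $\cat{D}(\cat{Mod} A)$ (this is where degreewise completeness of $M$ enters, via Mittag-Leffler) and the fact that $\opn{RHom}_A(P,-)$ commutes with derived inverse limits; chaining these gives $\opn{RHom}_A(P,M) \cong \mrm{R}\varprojlim_k \opn{Hom}_{A_k}(A_k \ot_A P, A_k \ot_A M) \cong \opn{Hom}_A(P,M)$, after which one still must check this composite is $\xi_{P,M}$. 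The paper instead resolves the first variable: it chooses a projective resolution $\phi : Q \to P$, uses flatness of adically projective modules to see that both complexes are K-flat, so that $\Lambda_{\a}(\phi) : \Lambda_{\a}(Q) \to \Lambda_{\a}(P) = P$ is again a quasi-isomorphism; by Lemma \ref{lem:33}(3) this is a homotopy equivalence of bounded above complexes of adically projective modules, and Lemma \ref{lem:40} identifies $\opn{Hom}_A(\Lambda_{\a}(Q),M) \cong \opn{Hom}_A(Q,M)$, which computes $\opn{RHom}_A(P,M)$ since $Q$ is K-projective. Either way, flatness of adic projectives is the engine; without it your argument never reaches $\opn{RHom}$.
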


\begin{proof}
Choose a resolution $\phi : Q \to P$ where $Q$ is a bounded above complex of
projective modules. Since both $P$ and $Q$ are K-flat complexes, it follows that
$\Lambda_{\a}(\phi) : \Lambda_{\a} (Q) \to \Lambda_{\a} (P)$
is also a quasi-isomorphism. But
$\tau_P : P \to \Lambda_{\a} (P)$ is bijective. We get a quasi-isomorphism
\[ \psi := \tau_P^{-1} \circ \Lambda_{\a}(\phi) :
\Lambda_{\a} (Q) \to P , \]
satisfying
$\psi \circ \tau_Q = \phi : Q \to P$.
According to Lemma \ref{lem:33}(3), $\psi$ is a homotopy equivalence.
Hence it induces a quasi-isomorphism
\[ \opn{Hom}(\psi, \bsym{1}_M) :
\opn{Hom}_A(P, M) \to \opn{Hom}_A(\Lambda_{\a} (Q), M) . \]
On the other hand, since $M$ consists of complete modules, by Lemma
\ref{lem:40} we see that the homomorphism
\[ \opn{Hom}(\tau_Q, \bsym{1}_M) :
\opn{Hom}_A(\Lambda_{\a} (Q), M) \to \opn{Hom}_A(Q, M)  \]
is bijective. We conclude that
\[ \opn{Hom}(\phi, \bsym{1}_M) : \opn{Hom}_A(P, M) \to \opn{Hom}_A(Q, M) \]
is a quasi-isomorphism.
But the homomorphism $\opn{Hom}(\phi, \bsym{1}_M)$ represents $\xi_{P, M}$.
\end{proof}

Let us denote by $\cat{AdPr} (A, \a)$ the full subcategory of
$\cat{Mod} A$ consisting of $\a$-adically projective modules. This is an
additive category. There is a corresponding triangulated category
$\cat{K}^-(\cat{AdPr} (A, \a))$, which is a full subcategory of
$\cat{K}(\cat{Mod} A)$.

\begin{thm} \label{thm:56}
The localization functor
$\cat{K}(\cat{Mod} A) \to  \cat{D}(\cat{Mod} A)$ induces an equivalence of
triangulated categories
\[ \cat{K}^-(\cat{AdPr} (A, \a)) \to
\cat{D}(\cat{Mod} A)^-_{\a \tup{-com}} . \]
\end{thm}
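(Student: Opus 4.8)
The plan is to show that the triangulated functor
\[ F \colon \cat{K}^-(\cat{AdPr}(A, \a)) \to \cat{D}(\cat{Mod} A)^-_{\a \tup{-com}} \]
induced by the localization $\cat{K}(\cat{Mod} A) \to \cat{D}(\cat{Mod} A)$ is well-defined, essentially surjective and fully faithful; since a fully faithful, essentially surjective triangulated functor is automatically an equivalence of triangulated categories (its quasi-inverse being automatically exact), this will conclude the proof. That $F$ is defined at all, i.e.\ lands in $\cat{D}(\cat{Mod} A)^-_{\a \tup{-com}}$, is clear: for a bounded above complex $P$ of $\a$-adically projective modules the object $P \in \cat{D}(\cat{Mod} A)$ has bounded above cohomology, and it is cohomologically complete by the implication (iii) $\Rightarrow$ (i) of Theorem \ref{thm:280}.

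Essential surjectivity is then immediate from the same theorem. Given $M \in \cat{D}(\cat{Mod} A)^-_{\a \tup{-com}}$, the implication (i) $\Rightarrow$ (iii) of Theorem \ref{thm:280} produces an isomorphism $P \cong M$ in $\cat{D}(\cat{Mod} A)$ with $P$ a bounded above complex of $\a$-adically projective modules, that is, $P$ is an object of $\cat{K}^-(\cat{AdPr}(A, \a))$ with $F(P) \cong M$.

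The core of the argument is full faithfulness, and this is where the weight of the statement lies, although it has essentially been absorbed into Lemma \ref{lem:36}. For bounded above complexes $P, Q$ of $\a$-adically projective modules, the map induced by $F$ is the canonical map
\[ \opn{Hom}_{\cat{K}(\cat{Mod} A)}(P, Q) = \mrm{H}^0 \bigl( \opn{Hom}_A(P, Q) \bigr) \to \mrm{H}^0 \bigl( \opn{RHom}_A(P, Q) \bigr) = \opn{Hom}_{\cat{D}(\cat{Mod} A)}(P, Q) , \]
which is $\mrm{H}^0$ applied to the comparison morphism $\xi_{P, Q} \colon \opn{Hom}_A(P, Q) \to \opn{RHom}_A(P, Q)$. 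Every $\a$-adically projective module is $\a$-adically complete by property (i) of Definition \ref{dfn:4}, so $Q$ is a complex of $\a$-adically complete modules. Hence Lemma \ref{lem:36}, applied with $M := Q$, shows that $\xi_{P, Q}$ is an isomorphism in $\cat{D}(\cat{Mod} A)$; in particular it is bijective on $\mrm{H}^0$, which is exactly the full faithfulness of $F$.

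In summary, the genuine obstacles have been resolved upstream: the existence of adically projective resolutions (Theorem \ref{thm:280}) yields well-definedness and essential surjectivity, while the comparison $\opn{Hom}_A \simeq \opn{RHom}_A$ for such complexes (Lemma \ref{lem:36}) yields full faithfulness. The only formal point to record is that $F$ is a functor of triangulated categories, i.e.\ that $\cat{K}^-(\cat{AdPr}(A, \a))$ is closed under mapping cones inside $\cat{K}(\cat{Mod} A)$; this holds because each degree of $\opn{cone}(f)$ is a finite direct sum $P^{\,\bullet} \oplus Q^{\,\bullet}$ of $\a$-adically projective modules, which is again $\a$-adically projective, and the bounded above condition is preserved.
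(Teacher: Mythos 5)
Your proposal is correct and follows essentially the same route as the paper's own proof: Theorem \ref{thm:280} gives that $\cat{D}(\cat{Mod} A)^-_{\a \tup{-com}}$ is the essential image of $\cat{K}^-(\cat{AdPr}(A,\a))$, and Lemma \ref{lem:36} (applied with $M := Q$, using that $\a$-adically projective modules are complete) gives that $\mrm{H}^0(\xi_{P,Q})$ is bijective, i.e.\ full faithfulness. The extra details you supply (well-definedness via (iii) $\Rightarrow$ (i), closure of $\cat{K}^-(\cat{AdPr}(A,\a))$ under cones) are correct formal points that the paper leaves implicit.
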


\begin{proof}
By Theorem \ref{thm:280}, the category $\cat{D}(\cat{Mod} A)^-_{\a \tup{-com}}$
is the essential image of
$\cat{K}^-(\cat{AdPr} (A, \a))$.
And by Lemma \ref{lem:36} we see that
\[ \mrm{H}^0(\xi_{P, Q}) :
\opn{Hom}_{\cat{K}(\cat{Mod} A)}(P, Q) \to
\opn{Hom}_{\cat{D}(\cat{Mod} A)}(P, Q) \]
is bijective  for any $P, Q \in \cat{K}^-(\cat{AdPr} (A, \a))$.
\end{proof}

\begin{lem}
Let $M$ be an $\a$-adically complete $A$-module. Then there is a
quasi-isomorphism $P \to M$, where $P$ is a complex of
$\a$-adically free $A$-modules with $\sup(P)\le 0$.
\end{lem}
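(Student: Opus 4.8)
The plan is to complete an ordinary free resolution of $M$ and then identify the cohomology of the completion by an inverse-limit ($\lim$--$\lim^1$) argument. Choose a free resolution $Q\to M$ in $\cat{C}(\cat{Mod}A)$ with $Q$ a bounded above complex of free modules and $\sup(Q)\le 0$, and set $P:=\Lambda_\a(Q)$. By Theorem \ref{thm:70}(1) each $P^j=\Lambda_\a(Q^j)$ is $\a$-adically free, so $P$ is a complex of $\a$-adically free modules with $\sup(P)\le 0$; and since $M$ is complete, the augmentation $Q\to M$ completes to a chain map $P=\Lambda_\a(Q)\to\Lambda_\a(M)=M$. What must be shown is that this map is a quasi-isomorphism, i.e.\ that $\mrm{H}^0(P)=M$ and $\mrm{H}^n(P)=0$ for $n\ne 0$.

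Write $A_k:=A/\a^{k+1}$. For every $j$ we have $P^j=\Lambda_\a(Q^j)=\lim_{\leftarrow k}(A_k\ox_A Q^j)$, so $P=\lim_{\leftarrow k}(A_k\ox_A Q)$ as complexes, and the term-wise transition maps $A_{k+1}\ox_A Q^j\to A_k\ox_A Q^j$ are surjective, hence Mittag--Leffler. The Milnor exact sequence for the cohomology of an inverse limit of complexes therefore applies, giving for each $n$
\[
0 \to {\lim_{\leftarrow k}}^{1}\, \mrm{H}^{n-1}(A_k\ox_A Q) \to \mrm{H}^{n}(P) \to \lim_{\leftarrow k} \mrm{H}^{n}(A_k\ox_A Q) \to 0 .
\]
Since $Q\to M$ is a flat resolution, $A_k\ox_A Q$ represents $A_k\ox^{\mrm{L}}_A M$, so $\mrm{H}^{n}(A_k\ox_A Q)=\opn{Tor}^A_{-n}(A_k,M)$. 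Feeding this in, the identity $\mrm{H}^0(P)=M$ amounts (using completeness, $\lim_{\leftarrow k}M/\a^{k+1}M=M$, and Mittag--Leffler for $\{M/\a^{k+1}M\}$) to the vanishing of $\lim^1_k\opn{Tor}^A_1(A_k,M)$, while $\mrm{H}^{-i}(P)=0$ for $i\ge 1$ amounts to the vanishing of both $\lim_{\leftarrow k}\opn{Tor}^A_i(A_k,M)$ and $\lim^1_k\opn{Tor}^A_{i+1}(A_k,M)$.

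Thus the lemma reduces to the assertion that for a \emph{complete} module $M$ the towers $\{\opn{Tor}^A_i(A/\a^{k+1},M)\}_k$ are pro-zero for all $i\ge 1$; in particular their $\lim$ and $\lim^1$ vanish. I would prove this by descending induction on $i$ together with dimension shifting. Choose, by Corollary \ref{cor:100}, an $\a$-adically free (hence flat) module $F^0$ surjecting onto $M$ with kernel $K$. Flatness of $F^0$ gives $\opn{Tor}^A_i(A_k,M)\cong\opn{Tor}^A_{i-1}(A_k,K)$ for $i\ge 2$, so once one knows that $K$ is again $\a$-adically complete the induction closes. By Corollary \ref{cor:105} the completion $\what{K}$ is complete, and the map $\tau_K:K\to\what K$ is a split injection whose complement is exactly $\lim_{\leftarrow k}\opn{Tor}^A_1(A_k,M)$; hence $K$ is complete precisely when this $\lim$ vanishes. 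Everything therefore rests on the base case: that $\{\opn{Tor}^A_1(A/\a^{k+1},M)\}_k$ is pro-zero whenever $M$ is complete.

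The base case is the main obstacle. For finitely generated $M$ it is immediate from Artin--Rees: writing $\opn{Tor}^A_1(A_k,M)=(R\cap\a^{k+1}G)/\a^{k+1}R$ for a presentation $0\to R\to G\to M\to 0$ with $G$ free, Artin--Rees makes the tower pro-zero. For a general complete $M$ this is unavailable, since $M$ need not be finitely generated, and pro-zeroness genuinely fails without completeness: for $A=\Z$, $\a=(p)$ and the Pr\"ufer group $M=\Q_p/\Z_p$ one has $\lim_{\leftarrow k}\opn{Tor}^A_1(\Z/p^k,M)=\Z_p\ne 0$. So the completeness hypothesis must be used essentially, forcing a compatible system of cycles that is infinitely divisible along the tower to vanish because $M$, being complete, is separated ($\bigcap_k\a^kM=0$); the noetherian hypothesis is what supplies Artin--Rees-type control on the finitely generated submodules of $M$ uniformly enough to transfer to the non-finitely-generated limit. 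Once the base case is in hand the reduction above finishes the proof, and a final good-truncation converts the resulting isomorphism $P\cong M$ in $\cat{D}(\cat{Mod}A)$ into the desired honest quasi-isomorphism $P\to M$.
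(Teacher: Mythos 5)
Your strategy (complete a free resolution $Q\to M$ and control $\mrm{H}(\Lambda_\a(Q))$ by the Milnor $\lim$--$\lim^1$ sequence) is sound as a reduction, but the proof has a genuine gap exactly where you say it does: the base case, that for a complete module $M$ the towers $\{\opn{Tor}^A_1(A/\a^{k+1},M)\}_k$ are pro-zero (or even just have vanishing $\lim$ and $\lim^1$), is never proved. The appeal to ``Artin--Rees-type control on the finitely generated submodules, uniformly enough to transfer to the non-finitely-generated limit'' is not an argument, and this is not a routine verification: since $Q$ is K-projective, your complex $P=\Lambda_\a(Q)$ computes $\mrm{L}\Lambda_\a(M)$, so the assertion that $P\to M$ is a quasi-isomorphism is literally the assertion that $M$ is cohomologically $\a$-adically complete. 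That is the amplitude-zero case of Theorem \ref{thm:251} --- the very statement this lemma exists to establish in the paper. So within the paper's logical structure your reduction is circular: you have traded the lemma for an equivalent (in fact harder, if you insist on pro-zero) unproved claim. Your dimension-shifting step has the same problem one level down: it requires the kernel $K$ of $F^0\surj M$ to be $\a$-adically complete, and you concede that this completeness is itself contingent on the vanishing you are trying to prove. (Note also that a closed submodule of a complete module need not be complete in its own $\a$-adic topology, so there is no shortcut here.)

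The paper's proof avoids homological machinery entirely and is a direct construction, which is why it does not run into this circle. The key observation is that if $N'$ is merely a \emph{closed} submodule of a complete module $N$, and $\{n_z\}_{z\in Z}$ generates $N'$ as an $A$-module, then by \cite[Corollary 2.6]{Ye2} the assignment $g\mapsto\sum_{z\in Z}g(z)n_z$ (a series converging in $N$, with image in $N'$ by closedness) defines a surjection $\opn{F}_{\tup{dec}}(Z,\what{A})\surj N'$ from an $\a$-adically free module. One then iterates: surject $P^0\surj M$, observe that $N^0=\opn{Ker}(P^0\to M)$ is a closed submodule of the complete module $P^0$, surject $P^{-1}\surj N^0$, and so on. Crucially the kernels are only required to be closed, never complete, so no Tor-vanishing or $\lim^1$ statement is needed at any stage. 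If you want to salvage your approach, the honest path is to prove the lemma the paper's way first and deduce your pro-zero/vanishing claim as a corollary (via Theorem \ref{thm:251}), not the other way around.
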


\begin{proof}
First consider any $\a$-adically complete module $N$. The module $N$ is a
complete metric space with respect to the $\a$-adic metric (see
\cite[Section 1]{Ye2}). Suppose $N'$ is a closed $A$-submodule of $N$ (not
necessarily $\a$-adically complete). Choose a collection $\{ n_z \}_{z \in Z}$
of elements of $N'$, indexed by a set $Z$, that generates $N'$ as an $A$-module.
Consider the module $\mrm{F}_{\tup{dec}}(Z, \what{A})$ of decaying
functions with values in $\what{A}$ (see \cite[Section 2]{Ye2}). According to
\cite[Corollary 2.6]{Ye2} there is a homomorphism
$\phi : \mrm{F}_{\tup{dec}}(Z, \what{A}) \to N$ that sends a decaying function
$g : Z \to \what{A}$ to the convergent series
$\sum_{z \in Z} g(z) n_z \in N$.
Because $N'$ is closed it follows that $\phi(g) \in N'$.
Writing $P := \mrm{F}_{\tup{dec}}(Z, \what{A})$,
we have constructed a surjection $\phi : P \to  N'$.
And of course $P$ is an $\a$-adically free module.

We now construct an $\a$-adically free resolution of the $\a$-adically
complete module $M$. By the previous paragraph there is an
$\a$-adically free module $P^0$ and a surjection
$\eta : P^0 \to  M$. The module $N^0 := \opn{Ker}(\eta)$
is a closed submodule of the $\a$-adically
complete module $P^0$. Hence there is an $\a$-adically free module
$P^1$ and a surjection $P^1 \to N^0$. And so on.
\end{proof}

\begin{thm} \label{thm:251}
Let $M \in \cat{D}(\cat{Mod} A)$ be a complex whose cohomology
$\mrm{H}(M) = \bigoplus_{i \in \Z} \mrm{H}^i(M)$ is bounded, and all the
$A$-modules
$\mrm{H}^i(M)$ are $\a$-adically complete. Then $M$ is cohomologically
$\a$-adically complete.
\end{thm}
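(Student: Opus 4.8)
The plan is to reduce, by a standard dévissage on the cohomology, to the case of a single module sitting in one degree, and then to combine the characterization of Theorem~\ref{thm:280} with the resolution lemma proved just above. The one structural fact I will lean on is that $\cat{D}(\cat{Mod} A)_{\a \tup{-com}}$ is a triangulated subcategory of $\cat{D}(\cat{Mod} A)$ (as recorded in the introduction): it is closed under translations, and whenever two vertices of a distinguished triangle are cohomologically complete, so is the third.

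First I would settle the one-module case: if $N$ is an $\a$-adically complete $A$-module, viewed as an object of $\cat{D}(\cat{Mod} A)$ concentrated in degree $0$, then $N$ is cohomologically complete. Indeed $N \in \cat{D}(\cat{Mod} A)^-$, and by the lemma preceding this theorem there is a quasi-isomorphism $P \to N$ with $P$ a complex of $\a$-adically free modules satisfying $\sup(P) \leq 0$. Such a $P$ is a bounded above complex of $\a$-adically projective modules isomorphic to $N$ in $\cat{D}(\cat{Mod} A)$, so condition~(iii) of Theorem~\ref{thm:280} is met, and (iii)$\Rightarrow$(i) gives that $N$ is cohomologically complete. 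Since the subcategory is closed under translations, $N[-i]$ is cohomologically complete for every $i \in \Z$.

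Next I would induct on $\opn{amp}(\mrm{H}(M))$, which is finite by hypothesis. If the amplitude is $-\infty$ then $M = 0$, which is trivially cohomologically complete; if it is $0$ then $M \cong \mrm{H}^i(M)[-i]$ in $\cat{D}(\cat{Mod} A)$ for a single $i$, and this is cohomologically complete by the previous paragraph, as $\mrm{H}^i(M)$ is $\a$-adically complete. For the inductive step, pick an integer $n$ with $\inf(\mrm{H}(M)) \leq n < \sup(\mrm{H}(M))$ and use the distinguished triangle arising from the canonical (smart) truncation functors,
\[ \tau^{\leq n} M \to M \to \tau^{\geq n+1} M \distri . \]
Each of $\tau^{\leq n} M$ and $\tau^{\geq n+1} M$ has bounded cohomology, whose cohomology modules are among the $\a$-adically complete modules $\mrm{H}^i(M)$ (or zero), and each has strictly smaller amplitude than $M$. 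By the inductive hypothesis the two outer terms are cohomologically complete, hence so is $M$, the subcategory being triangulated.

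The argument is essentially formal once the one-module case is secured, so I expect the only genuine content — and the step where a subtlety could hide — to be that reduction. Its whole point is that the preceding lemma constructs an $\a$-adically free resolution of $N$ \emph{directly}, by mapping modules of decaying functions onto closed submodules of complete modules, rather than by completing an ordinary free resolution; the latter route is circular, since $\Lambda_{\a}$ applied to a free resolution of $N$ computes $\mrm{L}\Lambda_{\a}(N)$, which agrees with $N$ precisely when $N$ is already cohomologically complete. I would also verify that only the finiteness of $\opn{amp}(\mrm{H}(M))$ is used (no boundedness of $M$ as a complex is assumed) and that the truncation sequence is genuinely distinguished in $\cat{D}(\cat{Mod} A)$, both of which are standard.
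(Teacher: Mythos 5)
Your proof is correct and follows essentially the same route as the paper's: the base case of a single $\a$-adically complete module is handled via the resolution lemma together with the structural characterization (you invoke Theorem~\ref{thm:280}(iii)$\Rightarrow$(i), the paper invokes Theorem~\ref{thm:56}, which amounts to the same thing), and the general case is an induction on $\opn{amp}(\mrm{H}(M))$ using smart truncation and the fact that $\cat{D}(\cat{Mod} A)_{\a \tup{-com}}$ is a triangulated subcategory. Your closing remark about why the resolution lemma must construct the $\a$-adically free resolution directly (rather than by completing an ordinary free resolution, which would be circular) is a correct and worthwhile observation.
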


\begin{proof}
If $\opn{amp}(\mrm{H} (M)) = 0$, then we can assume $M$ is a single
$\a$-adically complete module. By the lemma above and
Theorem \ref{thm:56} we see that
$M \in \cat{D}(\cat{Mod} A)_{\a \tup{-com}}$.

In general the proof is by descending induction on the amplitude of 
$\mrm{H} (M)$,  as in \cite[pages 69-71]{RD}.

Using smart truncation 
we get a distinguished triangle
$M' \to M \to M'' \to M'[1]$
in $\cat{D}(\cat{Mod} A)$, such that
$\mrm{H} (M')$ and $\mrm{H} (M'')$ have smaller amplitudes, and
$\mrm{H} (M') \oplus \mrm{H} (M'') \cong \mrm{H} (M)$.
Thus $\mrm{H}^i (M')$ and $\mrm{H}^i (M'')$ are complete modules.
By the induction hypotheses, $M'$ and $M''$ are in
$\cat{D}(\cat{Mod} A)_{\a \tup{-com}}$.
Since $\cat{D}(\cat{Mod} A)_{\a \tup{-com}}$
is a triangulated subcategory of $\cat{D}(\cat{Mod} A)$, it contains $M$ too.
\end{proof}

Here is an example showing that the converse of the
theorem above is false.

\begin{exa} \label{exa:1}
Let $A := \K[[t]]$, the power series ring in the variable $t$ over a field
$\K$, and $\a := (t)$.
As shown in \cite[Example 3.20]{Ye2}, there is a complex
\[ P = \bigl( \cdots \to 0 \to  P^{-1} \xar{\d}  P^{0} \to 0 \to \cdots \bigr)
\]
in which $P^{-1}$ and $P^0$ are $\a$-adically free $A$-modules
(of countable rank in the adic sense, i.e.\
$P^{-1} \cong P^{0} \cong \mrm{F}_{\tup{dec}}(\N, A)$),
$\mrm{H}^{-1} (P) = 0$, and the module $\mrm{H}^0 (P)$ is {\em not}
$\a$-adically complete. Yet by Theorem \ref{thm:56} the complex $P$
is cohomologically $\a$-adically complete.
\end{exa}

We end this section with a result on the structure of the category of derived
torsion complexes, that is analogous to Theorem \ref{thm:251}.  (The referee 
alerted us that this result was already observed in \cite{Li}, in the proof of 
Proposition 3.5.4(i).) Let us denote by
$\cat{Inj}_{\a \tup{-tor}}$ the full
subcategory of  $\cat{Mod} A$ consisting of $\a$-torsion injective $A$-modules.
This is an additive category.

\begin{lem} \label{lem:37}
Let $I$ be an injective $A$-module. Then $\Gamma_{\a} (I)$ is also an
injective $A$-module.
\end{lem}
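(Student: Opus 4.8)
The plan is to verify Baer's criterion: $\Gamma_{\a}(I)$ is injective if and only if every $A$-linear homomorphism $f : \b \to \Gamma_{\a}(I)$ from an ideal $\b \subseteq A$ extends to $A$. So I would fix such an ideal $\b$ and such a homomorphism $f$. Since $A$ is noetherian, $\b$ is finitely generated, say $\b = (b_1, \dots, b_r)$. Each $f(b_j)$ is $\a$-torsion, so there is an $n$ with $\a^n f(b_j) = 0$ for all $j$; as these generate the image, this gives $f(\b) \subseteq \opn{Hom}_A(A/\a^n, I)$, where I identify $\opn{Hom}_A(A/\a^m, I)$ with the submodule $\{ x \in I \mid \a^m x = 0 \}$ of $I$.

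Two facts drive the argument. First, for every $m$ the $A$-module $\opn{Hom}_A(A/\a^m, I)$ is an \emph{injective} $A/\a^m$-module: this is the standard fact that $\opn{Hom}_A(B, I)$ is injective over $B$ whenever $B$ is an $A$-algebra and $I$ is injective over $A$ (the functor $\opn{Hom}_A(-, I)$, restricted to $B$-modules, is exact and equals $\opn{Hom}_{A/\a^m}(-, \opn{Hom}_A(A/\a^m, I))$ by adjunction). Second, by the Artin--Rees lemma applied to $\b$ and $\a$ in the noetherian ring $A$, there is a $c$ such that $\b \cap \a^k \subseteq \a^{k-c} \b$ for all $k \ge c$.

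Now I set $N := n + c$ and $\bar A := A / \a^N$. Since $f(\b) \subseteq \opn{Hom}_A(A/\a^n, I)$ we have $\a^n f(\b) = 0$, hence $f(\a^n \b) = \a^n f(\b) = 0$. Combined with Artin--Rees this yields
\[ f(\b \cap \a^{N}) \subseteq f(\a^{n} \b) = 0 , \]
so $f$ vanishes on $\b \cap \a^{N}$ and therefore descends to a $\bar A$-linear homomorphism $\bar f : \bar\b \to \opn{Hom}_A(A/\a^{N}, I)$, where $\bar\b := (\b + \a^{N}) / \a^{N}$ is an ideal of $\bar A$ and the target (which contains $\opn{Hom}_A(A/\a^n, I)$, as $n \le N$) is an injective $\bar A$-module by the first fact. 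Thus $\bar f$ extends to $\bar g : \bar A \to \opn{Hom}_A(A/\a^{N}, I)$, and precomposing with the surjection $A \to \bar A$ gives $g : A \to \opn{Hom}_A(A/\a^{N}, I) \subseteq \Gamma_{\a}(I)$ with $g|_{\b} = f$. This verifies Baer's criterion.

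The step I expect to be the main obstacle is arranging that $f$ descends to the quotient $A / \a^N$: the naive estimate only produces $f(\a^n \b) = 0$, whereas descent requires $f(\b \cap \a^N) = 0$, and these two differ precisely by the Artin--Rees comparison between $\b \cap \a^N$ and $\a^n \b$, which is where noetherianness is essential. (An alternative route, avoiding Artin--Rees, is to invoke Matlis/Bass structure theory: write $I$ as a direct sum of modules $E(A/\p)$, check that $\Gamma_{\a}$ commutes with this direct sum and that $\Gamma_{\a}(E(A/\p))$ equals $E(A/\p)$ when $\a \subseteq \p$ and $0$ otherwise, and then use that direct sums of injectives over a noetherian ring are injective.)
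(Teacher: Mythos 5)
Your proof is correct, and it is essentially the paper's own: the paper simply cites Hartshorne, \emph{Algebraic Geometry}, Lemma III.3.2, whose classical argument is exactly yours (Baer's criterion, finite generation of the ideal to get a uniform torsion exponent, and Artin--Rees to pass from $f(\a^n \b) = 0$ to $f(\b \cap \a^N) = 0$ so that the map descends to $A/\a^N$, where one extends using injectivity of $\opn{Hom}_A(A/\a^N, I)$ over $A/\a^N$). You also correctly identified the Artin--Rees step as the crux where noetherianness enters.
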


\begin{proof}
This is well-known: see \cite[Lemma III.3.2]{Ha1}.
\end{proof}

\begin{prop} \label{prop:14}
The localization functor
$\cat{K}(\cat{Mod} A) \to \cat{D}(\cat{Mod} A)$
 induces an equivalence
\[ \cat{K}^+(\cat{Inj}_{\a \tup{-tor}}) \to
\cat{D}(\cat{Mod} A)^+_{\a \tup{-tor}} . \]
\end{prop}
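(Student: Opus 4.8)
The plan is to prove Proposition \ref{prop:14} by mimicking the structure of the proof of Theorem \ref{thm:56} for complete complexes, but now working on the dual side with injective torsion modules and bounded-below complexes. The statement asserts that the localization functor induces an equivalence $\cat{K}^+(\cat{Inj}_{\a \tup{-tor}}) \to \cat{D}(\cat{Mod} A)^+_{\a \tup{-tor}}$, so I need to establish two things: that the functor is essentially surjective (every cohomologically torsion complex with bounded-below cohomology is isomorphic in $\cat{D}(\cat{Mod} A)$ to a bounded-below complex of injective torsion modules), and that it is fully faithful (morphisms in $\cat{K}$ between such complexes coincide with morphisms in $\cat{D}$).

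For essential surjectivity, I would start from any $M \in \cat{D}(\cat{Mod} A)^+_{\a \tup{-tor}}$ and choose a bounded-below K-injective resolution $M \to I$, where $I$ is a bounded-below complex of injective $A$-modules with $\inf(I) = \inf(\mrm{H}(M))$. The key move is to apply $\Gamma_{\a}$ termwise: set $J := \Gamma_{\a}(I)$. By Lemma \ref{lem:37} each $J^i = \Gamma_{\a}(I^i)$ is an injective torsion module, so $J$ is a bounded-below complex in $\cat{Inj}_{\a \tup{-tor}}$. Since $I$ is K-injective, $\mrm{R}\Gamma_{\a}(M) \cong \Gamma_{\a}(I) = J$; but $M$ is cohomologically torsion, meaning the canonical morphism $\mrm{R}\Gamma_{\a}(M) \to M$ is an isomorphism, so $J \cong M$ in $\cat{D}(\cat{Mod} A)$. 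This exhibits $M$ in the essential image.

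For full faithfulness, the task reduces to showing that for $J, K \in \cat{K}^+(\cat{Inj}_{\a \tup{-tor}})$ the map
\[ \opn{Hom}_{\cat{K}(\cat{Mod} A)}(J, K) \to \opn{Hom}_{\cat{D}(\cat{Mod} A)}(J, K) \]
is bijective. The natural strategy is to invoke the standard fact that a bounded-below complex of injective $A$-modules is K-injective, hence computes morphisms in $\cat{D}$ correctly; the point is that each $K^i$, being an injective torsion module, is in particular an injective $A$-module, so $K$ is a bounded-below complex of injectives and is therefore K-injective. Then $\opn{Hom}_{\cat{K}}(J, K) = \opn{Hom}_{\cat{D}}(J, K)$ by the K-injectivity of the target, regardless of the shape of $J$.

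The main obstacle I anticipate is verifying that $J = \Gamma_{\a}(I)$ correctly represents $\mrm{R}\Gamma_{\a}(M)$, which hinges on the fact that applying the left exact functor $\Gamma_{\a}$ termwise to a K-injective complex of injectives computes the derived functor — this is true because each $\Gamma_{\a}(I^i)$ again consists of injectives (Lemma \ref{lem:37}) and $\Gamma_{\a}$ applied to a bounded-below complex of injectives yields a complex whose cohomology is $\mrm{R}\Gamma_{\a}$, but one must confirm the resulting complex is itself K-injective (which follows since it is a bounded-below complex of injective modules). Once that identification is secure, the equivalence follows formally: essential surjectivity from the torsion hypothesis and full faithfulness from K-injectivity of bounded-below injective complexes. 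This is cleaner than the complete case, since there K-projectivity of the completion had to be argued through the homotopy-equivalence machinery of Lemmas \ref{lem:33} and \ref{lem:36}, whereas here the classical K-injectivity of bounded-below injective complexes does the work directly.
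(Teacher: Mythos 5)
Your proof is correct, and the full faithfulness half is exactly the paper's argument: every object of $\cat{K}^+(\cat{Inj}_{\a \tup{-tor}})$ is a bounded-below complex of injective $A$-modules, hence K-injective, so morphisms in $\cat{K}(\cat{Mod} A)$ and in $\cat{D}(\cat{Mod} A)$ between such complexes agree. Where you genuinely diverge is in essential surjectivity. The paper takes a \emph{minimal} injective resolution $M \to I$ and shows that $I$ itself already lies in $\cat{K}^+(\cat{Inj}_{\a \tup{-tor}})$: it invokes the characterization from \cite{PSY1} that a cohomologically torsion complex has $\a$-torsion cohomology modules, and then uses minimality together with Lemma \ref{lem:37} (injective hulls of torsion modules are torsion) to see that every term of $I$ is a torsion injective. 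You instead take an arbitrary bounded-below injective resolution $M \to I$, apply $\Gamma_{\a}$ termwise to obtain $J := \Gamma_{\a}(I) \in \cat{K}^+(\cat{Inj}_{\a \tup{-tor}})$ (again by Lemma \ref{lem:37}), observe that $J$ represents $\mrm{R}\Gamma_{\a}(M)$ because $I$ is K-injective, and then conclude $J \cong M$ in $\cat{D}(\cat{Mod} A)$ directly from the \emph{definition} of cohomologically torsion, namely that $\mrm{R}\Gamma_{\a}(M) \to M$ is an isomorphism. (One small remark: in your ``obstacle'' paragraph, the identification $\mrm{R}\Gamma_{\a}(M) \cong \Gamma_{\a}(I)$ needs only the K-injectivity of $I$, not of $\Gamma_{\a}(I)$; the K-injectivity of the latter is true but is only used for the faithfulness half.) Your route is more self-contained, requiring neither minimal resolutions nor the torsion-cohomology characterization of $\cat{D}(\cat{Mod} A)_{\a \tup{-tor}}$; the paper's route yields slightly more structural information, namely that the minimal injective resolution of a cohomologically torsion complex consists of torsion injectives, which is the form of the statement exploited later, for instance in Example \ref{exa:260}, where Bass numbers of the minimal resolution are counted.
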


\begin{proof}
The fact that this is a fully faithful functor is clear, since the complexes in
$\cat{K}^+(\cat{Inj}_{\a \tup{-tor}})$ are K-injective.
We have to prove that this functor is essentially surjective on objects. So
take $M \in \cat{D}(\cat{Mod} A)^+_{\a \tup{-tor}}$, and let
$M \to I$ be a minimal injective resolution of $M$.
We know that all the cohomology modules $\mrm{H}^p(M)$ are $\a$-torsion.
By Lemma \ref{lem:37} it follows that the injective hull of
$\mrm{H}^p(M)$ is also $\a$-torsion. This implies that
$I$ belongs to $\cat{K}^+(\cat{Inj}_{\a \tup{-tor}})$.
\end{proof}

\section{Cohomologically Complete Nakayama} \label{sec:nak}

In this section we prove a cohomologically complete version of the Nakayama
Lemma. This is influenced by the paper \cite{KS}, as explained in  Remark 
\ref{rem:ks} below. Throughout this section we
assume this:

\begin{setup} \label{set:261}
$A$ is a noetherian ring, $\a$-adically complete with respect to
some ideal $\a$. We write $A_0 := A / \a$.
\end{setup}

\begin{thm}[Cohomologically Complete Nakayama] \label{thm:23}
With Setup \tup{\ref{set:261}}, let
$M \in \cat{D}(\cat{Mod} A)_{\a \tup{-com}}$ and $i_0 \in \Z$
be such that $\mrm{H}^i (M) = 0$ for all $i > i_0$, and
$\mrm{H}^{i_0} (A_0 \otimes^{\mrm{L}}_A M)$ is a finitely generated
$A_0$-module. Then $\mrm{H}^{i_0} (M)$ is a finitely generated
$A$-module.
\end{thm}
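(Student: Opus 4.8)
The plan is to deduce the finite generation of $\mrm{H}^{i_0}(M)$ from a sharper \emph{vanishing} statement, and to prove that vanishing statement by analysing $M$ as a derived inverse limit of its reductions modulo powers of $\a$.

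First I would reduce to the case $i_0 = \sup(\mrm{H}(M))$ (otherwise $\mrm{H}^{i_0}(M)=0$ and there is nothing to prove), so that $M \in \cat{D}(\cat{Mod}A)^-_{\a\tup{-com}}$. Write $N := \mrm{H}^{i_0}(M)$ and $M_k := A_k \otimes^{\mrm{L}}_A M$, where $A_k = A/\a^{k+1}$. Since $i_0$ is the top degree, right-exactness of $\otimes$ gives $\mrm{H}^{i_0}(M_k) \cong A_k \otimes_A N = N / \a^{k+1} N$; in particular the hypothesis says $N/\a N$ is a finitely generated $A_0$-module. I would choose $n_1,\dots,n_r \in N$ whose residues generate $N/\a N$. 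Representing $M$ by a bounded above complex $P$ with $\sup(P)=i_0$ (Theorem \ref{thm:280}), the degree-$i_0$ term consists entirely of cocycles, so lifting the $n_j$ to $P^{i_0}$ yields a chain map $L[-i_0] \to M$ with $L := A^{r}$ finite free. Set $C := \opn{cone}(L[-i_0] \to M)$.

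The point of this construction is that $C$ satisfies the hypotheses of a vanishing form of the theorem. Indeed $L$ is finitely generated over the complete ring $A$, hence $\a$-adically complete, so $L[-i_0]$ is cohomologically complete by Theorem \ref{thm:251}; as $M$ is cohomologically complete and $\cat{D}(\cat{Mod}A)_{\a\tup{-com}}$ is triangulated, $C$ is cohomologically complete as well. From the triangle one reads off $\mrm{H}^{i}(C)=0$ for $i>i_0$ and $\mrm{H}^{i_0}(C) \cong \opn{coker}(L \to N) = N/\langle n_1,\dots,n_r\rangle$. Applying $A_0 \otimes^{\mrm{L}}_A(-)$, the induced map $A_0^{r} = \mrm{H}^{i_0}(A_0 \otimes^{\mrm{L}}_A L[-i_0]) \to \mrm{H}^{i_0}(A_0 \otimes^{\mrm{L}}_A M) = N/\a N$ is surjective by the choice of the $n_j$, whence $\mrm{H}^{i_0}(A_0 \otimes^{\mrm{L}}_A C) = 0$. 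Thus the theorem reduces to proving: \emph{if $C$ is cohomologically complete with $\mrm{H}^{i}(C)=0$ for $i>i_0$ and $\mrm{H}^{i_0}(A_0 \otimes^{\mrm{L}}_A C)=0$, then $\mrm{H}^{i_0}(C)=0$} (equivalently $N$ is generated by the $n_j$, hence finitely generated).

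For this vanishing I would invoke the description of derived completion as a derived inverse limit, $C \cong \mrm{L}\Lambda_\a(C) \cong \mrm{R}\lim_{\leftarrow k}(A_k \otimes^{\mrm{L}}_A C)$ (from \cite{PSY1}), together with the Milnor exact sequence relating $\mrm{H}^{i_0}(C)$ to $\lim_{\leftarrow k}\mrm{H}^{i_0}(C_k)$ and ${\lim}^{1}_{\leftarrow k}\mrm{H}^{i_0-1}(C_k)$, where $C_k := A_k \otimes^{\mrm{L}}_A C$. Setting $\bar{N} := \mrm{H}^{i_0}(C)$, the hypothesis $\bar{N}/\a\bar{N} = \mrm{H}^{i_0}(A_0 \otimes^{\mrm{L}}_A C)=0$ gives $\bar{N} = \a^{k+1}\bar{N}$, hence $\mrm{H}^{i_0}(C_k) = \bar{N}/\a^{k+1}\bar{N} = 0$ for all $k$, so the $\lim$ term vanishes. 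To kill the ${\lim}^{1}$ term I would tensor the short exact sequence $0 \to \a^{k+1}/\a^{k+2} \to A_{k+1} \to A_k \to 0$ with $C$, obtaining a triangle $(\a^{k+1}/\a^{k+2})\otimes^{\mrm{L}}_A C \to C_{k+1} \to C_k \to (\a^{k+1}/\a^{k+2})\otimes^{\mrm{L}}_A C[1]$. Since $\a^{k+1}/\a^{k+2}$ is an $A_0$-module, $(\a^{k+1}/\a^{k+2})\otimes^{\mrm{L}}_A C \cong (\a^{k+1}/\a^{k+2})\otimes^{\mrm{L}}_{A_0}(A_0 \otimes^{\mrm{L}}_A C)$ has vanishing cohomology in degree $i_0$ (as $A_0 \otimes^{\mrm{L}}_A C$ is concentrated in degrees $\le i_0-1$). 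The long exact sequence of the triangle then forces each transition map $\mrm{H}^{i_0-1}(C_{k+1}) \to \mrm{H}^{i_0-1}(C_k)$ to be surjective, so the tower is Mittag–Leffler and ${\lim}^{1}_{\leftarrow k}\mrm{H}^{i_0-1}(C_k)=0$. Hence $\bar{N}=0$.

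I expect the main obstacle to be exactly the control of the ${\lim}^{1}$-term: cohomological completeness alone does \emph{not} force the top cohomology to be separated (Example \ref{exa:1}), and without the Mittag–Leffler property the comparison $\mrm{H}^{i_0}(C) \to \lim_{\leftarrow k}\mrm{H}^{i_0}(C_k)$ can fail to be injective. The device that overcomes this is the reduction, via the cone on a finite-free approximation at the top degree, to the case where $A_0 \otimes^{\mrm{L}}_A C$ is concentrated below degree $i_0$; this is precisely what makes the graded pieces $(\a^{k+1}/\a^{k+2})\otimes^{\mrm{L}}_A C$ vanish in degree $i_0$ and thereby yields the surjectivity of the transition maps.
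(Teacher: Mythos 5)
Your proposal is correct, but it takes a genuinely different route from the paper's proof. The paper works at the module level: after reducing to $i_0=0$, it invokes Theorem \ref{thm:280} to replace $M$ by a complex $P$ of $\a$-adically free (hence complete) modules with $\sup(P)=0$, lifts finitely many generators of $\mrm{H}^{0}(A_0\otimes_A P)$ to obtain a homomorphism $\psi=(\d,\theta)\colon P^{-1}\oplus\opn{F}_{\mrm{fin}}(Z,A)\to P^{0}$ that is surjective modulo $\a$, and then applies the complete Nakayama for modules \cite[Theorem 2.11]{Ye2} (valid because the target $P^0$ is $\a$-adically complete) to conclude that $\psi$ is surjective. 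You instead keep completeness at the derived level: your cone construction reduces the theorem to a vanishing statement (a ``derived Nakayama''), which you prove by writing $C\cong\mrm{L}\Lambda_{\a}(C)$ as a limit of the tower $C_k=A_k\otimes^{\mrm{L}}_A C$ and killing both the $\lim$ term (via the K\"unneth trick, Lemma \ref{lem:nak.1}) and the ${\lim}^1$ term (Mittag--Leffler, via the graded-piece triangles $(\a^{k+1}/\a^{k+2})\otimes^{\mrm{L}}_A C\to C_{k+1}\to C_k$). Note that this inverts the paper's logical order: the paper deduces the conservativity corollary from Theorem \ref{thm:23}, whereas you prove the conservativity-type vanishing first and extract finite generation from it by the cone trick. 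Your route avoids both \cite[Theorem 2.11]{Ye2} and the full strength of Theorem \ref{thm:280} (any bounded above resolution with top degree $i_0$ suffices for your lifting step), at the cost of the identification $\mrm{L}\Lambda_{\a}(C)\cong\mrm{R}\lim_{\leftarrow k}(A_k\otimes^{\mrm{L}}_A C)$, which is \emph{not} stated in \cite{PSY1} in this form (and can fail for non-noetherian rings in general). You should justify it directly: computing $\mrm{L}\Lambda_{\a}$ on a K-projective resolution $Q\to C$ gives $\mrm{L}\Lambda_{\a}(C)\cong\Lambda_{\a}(Q)=\lim_{\leftarrow k}(A_k\otimes_A Q)$, an honest inverse limit of complexes whose transition maps are degreewise surjective; this both identifies the limit with the derived limit and supplies the Milnor exact sequence you use (cf.\ \cite[Theorem 3.5.8]{We}). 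With that justification supplied, the argument is complete.
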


\begin{proof}
We may assume that $i_0 = 0$.
According to Theorem \ref{thm:280}
we can replace $M$ with a complex $P$ of $\a$-adically free
$A$-modules such that
$\opn{sup}(P) = 0$. There is an exact sequence of $A$-modules
\[  P^{-1} \xar{\d} P^0 \xar{\eta} \mrm{H}^0 (P) \to 0 . \]
Now $A_0 \otimes^{\mrm{L}}_A M \cong A_0 \otimes_A P$
in $\cat{D}(\cat{Mod} A_0)$. Let
$L_0 := \mrm{H}^{0} (A_0 \otimes_A P)$, so we have an exact sequence of
$A_0$-modules
\[  A_0 \otimes_A P^{-1} \xar{\bsym{1}_{A_0} \otimes \, \d} A_0 \otimes_A P^0
\xar{\nu} L_0 \to 0 . \]
Choose a finite collection
$\{ \bar{p}_z \}_{z \in Z}$ of elements of
$A_0 \otimes_A P^0$, such that the collection \linebreak
$\{ \nu(\bar{p}_z) \}_{z \in Z}$ generates $L_0$.
Let
\[ \theta_0 : \opn{F}_{\mrm{fin}}(Z, A_0) \to A_0 \otimes_A P^0 \]
be the homomorphism
corresponding to the collection $\{ \bar{p}_z \}_{z \in Z}$.
Then the homomorphism
\[ \psi_0 := (\bsym{1}_{A_0} \otimes \, \d, \, \theta_0) :
(A_0 \otimes_A P^{-1}) \oplus
\opn{F}_{\mrm{fin}}(Z, A_0) \to A_0 \otimes_A P^0 \]
is surjective.

For any $z \in Z$ choose some element $p_z \in P^0$ lifting the
element $ \bar{p}_z$, and let
$\theta : \opn{F}_{\mrm{fin}}(Z, A) \to P^0$
be the corresponding homomorphism.
We get a homomorphism of $A$-modules
\[ \psi := (\d, \theta) : P^{-1} \oplus
\opn{F}_{\mrm{fin}}(Z, A) \to P^0 . \]
It fits into a commutative diagram
\[ \UseTips \xymatrix @C=7ex @R=6ex {
P^{-1}  \oplus \opn{F}_{\mrm{fin}}(Z, A)
\ar[r]^(0.6){\psi}
\ar[d]_{\rho}
&
P^0
\ar[d]^{\pi}
\\
(A_0 \otimes_A P^{-1})  \oplus \opn{F}_{\mrm{fin}}(Z, A_0)
\ar[r]^(0.65){\psi_0}
&
A_0 \otimes_A P^{0} \ ,
} \]
where $\rho$ and $\pi$ are the canonical surjections induced by $A \to A_0$.
Now
$\psi_0 \circ \rho = \pi \circ \psi$ is surjective.
By the complete Nakayama \cite[Theorem 2.11]{Ye2} the homomorphism $\psi$
is surjective. We conclude that $\mrm{H}^0 (P)$ is generated by the finite
collection $\{ \eta(p_z) \}_{z \in Z}$.
\end{proof}

\begin{rem}
With some extra work (cf.\ proof of Lemma \ref{lem:cof.1}) one can prove the
following stronger result: Let $M \in \cat{D}(\cat{Mod} A)^-_{\a \tup{-com}}$
and $i_0 \in \Z$. Then $\mrm{H}^i (M)$ is finitely generated over $A$ for all
$i \geq i_0$ iff $\mrm{H}^i (A_0 \otimes^{\mrm{L}}_A M)$ is finitely generated
over $A_0$ for all $i \geq i_0$.
\end{rem}

The next result will be used several times. (The easy proof is an exercise.)

\begin{lem}[K\"unneth Trick] \label{lem:nak.1}
Let $M, N \in \cat{D}(\cat{Mod} A)$. If $i\ge \sup(\mrm{H}(M))$ and 
$j \ge \sup(\mrm{H}(N))$, then there is a canonical isomorphism of $A$-modules
\[ \mrm{H}^{i + j} (M \otimes^{\mrm{L}}_A N) \cong
\mrm{H}^{i} (M) \otimes_A \mrm{H}^{j} (N) . \]
\end{lem}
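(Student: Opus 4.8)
The plan is to reduce the derived tensor product to an honest tensor product of bounded-above flat complexes and then read off the top cohomology by inspection. Write $s := \sup(\mrm{H}(M))$ and $t := \sup(\mrm{H}(N))$, so that by hypothesis $i \ge s$ and $j \ge t$. First I would dispose of the trivial case: if $M = 0$ or $N = 0$ both sides vanish, so I assume both are nonzero, whence $s, t \in \Z$. I would then choose quasi-isomorphisms $P \to M$ and $Q \to N$, where $P$ and $Q$ are bounded above complexes of free $A$-modules with $\sup(P) = s$ and $\sup(Q) = t$; such resolutions exist by the standard construction already invoked in the proof of Theorem \ref{thm:280}. Since a bounded above complex of flat modules is K-flat, this gives $M \ot^{\mrm{L}}_A N \cong P \ot_A Q$ in $\cat{D}(\cat{Mod} A)$, and the complex $P \ot_A Q$ is concentrated in degrees $\le s + t$; in particular $\sup(\mrm{H}(M \ot^{\mrm{L}}_A N)) \le s + t$.

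Next I would separate the cases according to the size of $i + j$. If $i > s$ or $j > t$, then $i + j > s + t$, so the left side $\mrm{H}^{i+j}(M \ot^{\mrm{L}}_A N)$ vanishes by the bound just noted; and the right side vanishes too, since then $\mrm{H}^i(M) = 0$ or $\mrm{H}^j(N) = 0$. It thus remains to treat the main case $i = s$ and $j = t$. Here the only nonzero summand of $(P \ot_A Q)^{s+t}$ is $P^s \ot_A Q^t$, there is no outgoing differential (it is the top degree), and the incoming differential from $(P^{s-1} \ot_A Q^t) \oplus (P^s \ot_A Q^{t-1})$ has image $\opn{im}(\d_P \ot \bsym{1}) + \opn{im}(\bsym{1} \ot \d_Q)$ inside $P^s \ot_A Q^t$. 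Hence
\[ \mrm{H}^{s+t}(P \ot_A Q) = \frac{P^s \ot_A Q^t}{\opn{im}(\d_P \ot \bsym{1}) + \opn{im}(\bsym{1} \ot \d_Q)} . \]
Applying right exactness of $\ot_A$ in each variable identifies this cokernel with $(P^s / \d_P P^{s-1}) \ot_A (Q^t / \d_Q Q^{t-1})$, which is exactly $\mrm{H}^s(P) \ot_A \mrm{H}^t(Q) = \mrm{H}^i(M) \ot_A \mrm{H}^j(N)$.

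For the claim of canonicity I would observe that the isomorphism just produced is the top-degree instance of the cup-product pairing $\mrm{H}^i(M) \ot_A \mrm{H}^j(N) \to \mrm{H}^{i+j}(M \ot^{\mrm{L}}_A N)$ sending a class $[z] \ot [w]$ to $[z \ot w]$; since this pairing is defined independently of the chosen resolutions, the identification does not depend on $P$ and $Q$. The only point needing a moment of care is the elementary fact that for submodules $X' \subseteq X$ and $Y' \subseteq Y$ one has $(X \ot_A Y)/(\opn{im}(X' \ot Y) + \opn{im}(X \ot Y')) \cong (X/X') \ot_A (Y/Y')$, which is a direct consequence of right exactness applied in each variable. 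Everything else is bookkeeping with the single top-degree term, so I anticipate no genuine obstacle here — consistent with the authors' remark that the proof is an exercise.
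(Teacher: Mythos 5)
Your proof is correct and complete: the reduction to bounded-above free resolutions $P$ and $Q$ with $\sup(P)=\sup(\mrm{H}(M))$ and $\sup(Q)=\sup(\mrm{H}(N))$, the vanishing in the degenerate case $i+j > \sup(P)+\sup(Q)$, and the identification of the top-degree cohomology of $P\ot_A Q$ as a cokernel via right exactness of $\ot_A$ in each variable are all sound, and the cup-product description settles canonicity. The paper gives no proof at all (it declares the lemma ``an exercise''), and your argument is exactly the standard one the authors evidently had in mind, so there is nothing to compare beyond noting that you have filled the gap correctly.
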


\begin{cor}
Let $M \in \cat{D}(\cat{Mod} A)^-_{\a \tup{-com}}$.
If $A_0 \otimes^{\mrm{L}}_A M = 0$ then $M = 0$.
\end{cor}

\begin{proof}
Let's assume, for the sake of
contradiction,  that $M \neq 0$ but $A_0 \otimes^{\mrm{L}}_A M = 0$.
Let
$i := \sup (\mrm{H} (M))$,
which is an integer, since $M$ is nonzero and bounded above.
By Lemma \ref{lem:nak.1} we know that
\[ \mrm{H}^{i} (A_0 \otimes^{\mrm{L}}_A M) \cong
A_0 \otimes_A \mrm{H}^{i} (M) ; \]
therefore
$A_0 \otimes_A \mrm{H}^{i} (M) = 0$.
Now Theorem \ref{thm:23}
says that the $A$-module $\mrm{H}^{i} (M)$ is finitely generated.
So by the usual Nakayama Lemma we conclude that $\mrm{H}^{i} (M) = 0$. This is
a contradiction.
\end{proof}

\begin{rem}\label{rem:ks}
A triangulated functor $F$ with the property that $F(M)=0$ if and only if $M=0$ 
is called a {\em conservative functor} in \cite[Section 1.4]{KS}.
The corollary says that the functor
\[ A_0 \otimes^{\mrm{L}}_A - : \cat{D}(\cat{Mod} A)^- \to
\cat{D}(\cat{Mod} A_0)^- \]
is conservative.

Let $\bsym{a} = (a_1, \ldots, a_n)$ be a generating sequence for the ideal $\a$,
and let $K := \opn{K}(A; \bsym{a})$, the Koszul complex, which we view as a DG
$A$-algebra. By arguments similar to those used in \cite{PSY2}, one
can show that the functor
\[ K \otimes^{\mrm{L}}_A - : \cat{D}(\cat{Mod} A) \to
\til{\cat{D}}(\cat{DGMod} K) \]
is conservative. If $\bsym{a}$ is a regular sequence then the DG algebra
homomorphism $K \to A_0$ is a quasi-isomorphism; and hence the functor
$A_0 \otimes^{\mrm{L}}_A -$ is conservative on unbounded complexes. This was
proved in \cite[Corollary 1.5.9]{KS} in the principal case ($n = 1$).
\end{rem}

\section{Cohomologically Cofinite Complexes} \label{sec:cof}

We continue with Setup \ref{set:261}. Recall that
$\cat{D}(\cat{Mod} A)^{\mrm{b}}_{\a \tup{-com}}$
is the category of bounded cohomologically $\a$-adically complete complexes.

\begin{prop} \label{prop:263}
The category $\cat{D}_{\mrm{f}}(\cat{Mod} A)^{\mrm{b}}$ is contained in
$\cat{D}(\cat{Mod} A)^{\mrm{b}}_{\a \tup{-com}}$.
\end{prop}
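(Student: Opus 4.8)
The plan is to reduce the statement to Theorem \ref{thm:251}, which already asserts that a complex with bounded cohomology, all of whose cohomology modules are $\a$-adically complete, is cohomologically $\a$-adically complete. Thus the only thing requiring verification is that, under Setup \ref{set:261}, every finitely generated $A$-module is $\a$-adically complete. Everything else is formal.

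First I would take a complex $M \in \cat{D}_{\mrm{f}}(\cat{Mod} A)^{\mrm{b}}$. By definition its cohomology $\mrm{H}(M)$ is bounded, and each cohomology module $\mrm{H}^i(M)$ is a finitely generated $A$-module. The goal is then to show each such module is $\a$-adically complete, so that the hypotheses of Theorem \ref{thm:251} are met.

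Next I would supply the required commutative-algebra input. Since $A$ is noetherian, the completion functor $\Lambda_{\a}$ is exact on finitely generated modules (as recalled in the Introduction); more precisely, for a finitely generated module $N$ there is a canonical isomorphism $\Lambda_{\a}(N) \cong \what{A} \ot_A N$. Because $A$ is assumed $\a$-adically complete, the canonical map $A \to \what{A}$ is an isomorphism, and hence $\Lambda_{\a}(N) \cong \what{A} \ot_A N \cong A \ot_A N \cong N$, with the composite map being the canonical homomorphism $\tau_N : N \to \Lambda_{\a}(N)$. This shows $\tau_N$ is bijective, i.e.\ $N$ is $\a$-adically complete. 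Applying this with $N = \mrm{H}^i(M)$ for each $i$ gives that every cohomology module of $M$ is $\a$-adically complete.

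Finally I would invoke Theorem \ref{thm:251}: since $\mrm{H}(M)$ is bounded and all the $\mrm{H}^i(M)$ are $\a$-adically complete, $M$ is cohomologically $\a$-adically complete; together with the boundedness of its cohomology this places $M$ in $\cat{D}(\cat{Mod} A)^{\mrm{b}}_{\a \tup{-com}}$. There is no genuine obstacle here: the argument is essentially a packaging of Theorem \ref{thm:251} with the classical fact that finitely generated modules over a complete noetherian ring are complete, and the only point to state carefully is the identification $\Lambda_{\a}(N) \cong \what{A} \ot_A N$ for finitely generated $N$, which rests on the noetherian hypothesis.
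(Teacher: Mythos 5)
Your proposal is correct and follows essentially the same route as the paper: the paper's proof is exactly the observation that every finitely generated module over the $\a$-adically complete noetherian ring $A$ is $\a$-adically complete, so that the statement is a special case of Theorem \ref{thm:251}. Your additional justification of that observation, via the isomorphism $\Lambda_{\a}(N) \cong \what{A} \ot_A N$ for finitely generated $N$ and the identification $A \cong \what{A}$, is a standard and valid filling-in of the detail the paper leaves unstated.
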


\begin{proof}
Any finitely generated $A$-module is $\a$-adically complete. So this is a
special case of Theorem \ref{thm:251}.
\end{proof}

\begin{dfn} \label{dfn:3}
A complex $M \in \cat{D}(\cat{Mod} A)^{\mrm{b}}$
is called {\em cohomologically $\a$-adically cofinite} if
$M \cong \mrm{R} \Gamma_{\a} (N)$
for some
$N \in \cat{D}_{\mrm{f}}(\cat{Mod} A)^{\mrm{b}}$.

We denote by
$\cat{D}(\cat{Mod} A)^{\mrm{b}}_{\a \tup{-cof}}$
the full subcategory of $\cat{D}(\cat{Mod} A)^{\mrm{b}}$ consisting of
cohomologically $\a$-adically cofinite complexes.
\end{dfn}

See Example \ref{exa:260} for an explanation of the name ``cofinite''.

Here is one characterization of cohomologically $\a$-adically cofinite
complexes. It was previously proved in \cite[Section 2.5]{AJL2}.

\begin{prop} \label{prop:3}
The following conditions are equivalent for
$M \in \cat{D}(\cat{Mod} A)^{\mrm{b}}_{\a \tup{-tor}}$~\tup{:}
\begin{enumerate}
\rmitem{i} $M$ is in $\cat{D}(\cat{Mod} A)^{\mrm{b}}_{\a \tup{-cof}}$.
\rmitem{ii} The complex $\mrm{L} \Lambda_{\a} (M)$ is in
$\cat{D}_{\mrm{f}}(\cat{Mod} A)^{\mrm{b}}$.
\end{enumerate}
\end{prop}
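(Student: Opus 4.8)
The plan is to obtain both implications directly from the MGM Equivalence \cite[Theorem 1.1]{PSY1}, which makes $\mrm{R}\Gamma_{\a}$ and $\mrm{L}\Lambda_{\a}$ into mutually quasi-inverse equivalences between $\cat{D}(\cat{Mod} A)_{\a \tup{-com}}$ and $\cat{D}(\cat{Mod} A)_{\a \tup{-tor}}$. The one external ingredient I would invoke is Proposition \ref{prop:263}: because $A$ is $\a$-adically complete, every finitely generated module is $\a$-adically complete, so $\cat{D}_{\mrm{f}}(\cat{Mod} A)^{\mrm{b}} \subseteq \cat{D}(\cat{Mod} A)^{\mrm{b}}_{\a \tup{-com}}$. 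Throughout I would use that $\mrm{R}\Gamma_{\a}$ and $\mrm{L}\Lambda_{\a}$ have finite cohomological dimension, so that they respect the boundedness superscript $\mrm{b}$ and the restricted functors still land where we need them.

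For (i) $\Rightarrow$ (ii) I would start from the witness $N \in \cat{D}_{\mrm{f}}(\cat{Mod} A)^{\mrm{b}}$ of Definition \ref{dfn:3}, with $M \cong \mrm{R}\Gamma_{\a}(N)$. Proposition \ref{prop:263} tells us that $N$ is cohomologically complete, so the canonical MGM isomorphism gives $\mrm{L}\Lambda_{\a}(\mrm{R}\Gamma_{\a}(N)) \cong N$; applying $\mrm{L}\Lambda_{\a}$ to $M \cong \mrm{R}\Gamma_{\a}(N)$ then yields $\mrm{L}\Lambda_{\a}(M) \cong N \in \cat{D}_{\mrm{f}}(\cat{Mod} A)^{\mrm{b}}$, which is (ii). Conversely, for (ii) $\Rightarrow$ (i) I would simply set $N := \mrm{L}\Lambda_{\a}(M)$, which by hypothesis lies in $\cat{D}_{\mrm{f}}(\cat{Mod} A)^{\mrm{b}}$; since $M$ is cohomologically $\a$-torsion, the canonical MGM isomorphism gives $M \cong \mrm{R}\Gamma_{\a}(\mrm{L}\Lambda_{\a}(M)) = \mrm{R}\Gamma_{\a}(N)$, exhibiting $M$ as cohomologically $\a$-adically cofinite.

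Since this is essentially a reformulation of MGM restricted to the relevant subcategories, there is no real obstacle; the only care needed is bookkeeping. I would make sure that the unbounded MGM Equivalence restricts correctly after intersecting with $\cat{D}(\cat{Mod} A)^{\mrm{b}}$ (guaranteed by the finite cohomological dimensions), and that I transport $N$ through the genuinely \emph{canonical} adjunction isomorphisms, so that no arbitrary choices intervene and the identifications are natural. No input beyond \cite[Theorem 1.1]{PSY1} and Proposition \ref{prop:263} should be required.
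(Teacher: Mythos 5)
Your proof is correct and follows essentially the same route as the paper's: both set $N := \mrm{L}\Lambda_{\a}(M)$, invoke the MGM Equivalence together with Proposition \ref{prop:263} (to know the witness in Definition \ref{dfn:3} is cohomologically complete), and conclude in both directions. The only cosmetic difference is that the paper packages the step you carry out explicitly --- applying $\mrm{L}\Lambda_{\a}$ to $M \cong \mrm{R}\Gamma_{\a}(N')$ --- as a uniqueness statement for the complete preimage under $\mrm{R}\Gamma_{\a}$.
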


\begin{proof}
Let $N := \mrm{L} \Lambda_{\a} (M)$. By MGM Equivalence
\cite[Theorem 0.3]{PSY1}  we know that
$N \in \cat{D}(\cat{Mod} A)^{\mrm{b}}_{\a \tup{-com}}$, and that
$M \cong \mrm{R} \Gamma_{\a} (N)$.
Moreover, if
$M \cong \mrm{R} \Gamma_{\a} (N')$ for some other
$N' \in \cat{D}(\cat{Mod} A)^{\mrm{b}}_{\a \tup{-com}}$, then
$N' \cong N$. Thus
$M \in \cat{D}(\cat{Mod} A)^{\mrm{b}}_{\a \tup{-cof}}$
if and only if
$N \in \cat{D}_{\mrm{f}}(\cat{Mod} A)^{\mrm{b}}$.
\end{proof}

\begin{cor} \label{cor:cof.1}
The functor $\mrm{R} \Gamma_{\a}$ induces an equivalence of triangulated
categories
\[ \cat{D}_{\mrm{f}}(\cat{Mod} A)^{\mrm{b}} \to
\cat{D}(\cat{Mod} A)^{\mrm{b}}_{\a \tup{-cof}} \ , \]
with quasi-inverse $\mrm{L} \Lambda_{\a}$.
\end{cor}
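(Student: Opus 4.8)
The plan is to obtain the corollary by restricting the MGM Equivalence to the relevant subcategories, exploiting the fact that the essential image on the target side has already been arranged by definition. First I would record the two containments that place the problem inside the MGM framework: by Proposition \ref{prop:263} we have $\cat{D}_{\mrm{f}}(\cat{Mod} A)^{\mrm{b}} \subset \cat{D}(\cat{Mod} A)_{\a \tup{-com}}$, and by Definition \ref{dfn:3} the category $\cat{D}(\cat{Mod} A)^{\mrm{b}}_{\a \tup{-cof}}$ is by construction a full subcategory of $\cat{D}(\cat{Mod} A)_{\a \tup{-tor}}$. Thus both the source and the target sit inside the two sides of the MGM Equivalence, and $\mrm{R} \Gamma_{\a}$ does map the source into the target.

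Next I would deduce full faithfulness for free: since $\mrm{R} \Gamma_{\a}$ is fully faithful on all of $\cat{D}(\cat{Mod} A)_{\a \tup{-com}}$, being one half of the MGM Equivalence, its restriction to the full subcategory $\cat{D}_{\mrm{f}}(\cat{Mod} A)^{\mrm{b}}$ is again fully faithful. Essential surjectivity onto $\cat{D}(\cat{Mod} A)^{\mrm{b}}_{\a \tup{-cof}}$ is immediate, because by Definition \ref{dfn:3} the target is \emph{defined} as the essential image of the source under $\mrm{R} \Gamma_{\a}$. A fully faithful, essentially surjective functor is an equivalence.

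To identify $\mrm{L} \Lambda_{\a}$ as the quasi-inverse, I would check that it restricts to a functor $\cat{D}(\cat{Mod} A)^{\mrm{b}}_{\a \tup{-cof}} \to \cat{D}_{\mrm{f}}(\cat{Mod} A)^{\mrm{b}}$. Given a cofinite $M$, write $M \cong \mrm{R} \Gamma_{\a}(N)$ with $N \in \cat{D}_{\mrm{f}}(\cat{Mod} A)^{\mrm{b}}$; then by the MGM Equivalence $\mrm{L} \Lambda_{\a}(M) \cong N$, so its value lands in $\cat{D}_{\mrm{f}}(\cat{Mod} A)^{\mrm{b}}$ — this is exactly the content of Proposition \ref{prop:3}. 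The inversion isomorphisms $\mrm{L} \Lambda_{\a} \circ \mrm{R} \Gamma_{\a} \cong \bsym{1}$ and $\mrm{R} \Gamma_{\a} \circ \mrm{L} \Lambda_{\a} \cong \bsym{1}$ then simply restrict from the ambient MGM Equivalence.

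Finally I would note that both subcategories are triangulated — $\cat{D}_{\mrm{f}}(\cat{Mod} A)^{\mrm{b}}$ because over a noetherian ring the finitely generated modules are closed under kernels, cokernels, and extensions, and $\cat{D}(\cat{Mod} A)^{\mrm{b}}_{\a \tup{-cof}}$ because it is the image of a triangulated category under the triangulated functor $\mrm{R} \Gamma_{\a}$ — and that $\mrm{R} \Gamma_{\a}$ and $\mrm{L} \Lambda_{\a}$ are triangulated, so the equivalence respects the triangulated structure. There is no serious obstacle: the corollary is essentially a formal repackaging of MGM Equivalence together with Propositions \ref{prop:263} and \ref{prop:3}, and the only step requiring input beyond abstract nonsense is the containment $\mrm{L} \Lambda_{\a}(M) \in \cat{D}_{\mrm{f}}(\cat{Mod} A)^{\mrm{b}}$, which Proposition \ref{prop:3} already supplies.
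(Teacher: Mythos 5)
Your proposal is correct and matches the paper's proof, which simply states that the corollary is ``immediate from MGM Equivalence and Proposition \ref{prop:3}''; your write-up is just the careful expansion of that one-line argument (restriction of the MGM equivalence, with Proposition \ref{prop:263} placing the source in $\cat{D}(\cat{Mod} A)_{\a \tup{-com}}$, essential surjectivity by Definition \ref{dfn:3}, and Proposition \ref{prop:3} ensuring $\mrm{L} \Lambda_{\a}$ lands in $\cat{D}_{\mrm{f}}(\cat{Mod} A)^{\mrm{b}}$).
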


\begin{proof}
Immediate from MGM Equivalence \cite[Theorem 0.3]{PSY1} and Proposition
\ref{prop:3}.
\end{proof}

\begin{rem} \label{rem:100}
Let  $\cat{D}(\cat{Mod} A)_{\a \tup{-cof}}$ be the essential image under 
$\mrm{R} \Gamma_{\a}$ of $\cat{D}_{\tup{f}}(\cat{Mod} A)$.
The category $\cat{D}(\cat{Mod} A)_{\a \tup{-cof}}$
was first considered  in \cite[Section 2.5]{AJL2}, where the notation 
$\cat{D}_{\mrm{c}}^*$ was used.

The category $\cat{D}(\cat{Mod} A)^{\mrm{b}}_{\a \tup{-cof}}$
is important because it contains the {\em t-dualizing complexes}. 
Recall that a t-dualizing complex is a complex 
$R \in \cat{D}(\cat{Mod} A)^{\mrm{b}}_{\a \tup{-tor}}$ that has finite 
injective dimension, $\opn{Ext}_A^i(A_0,R)$ is a finitely generated $A$-module 
for all $i$, and the adjunction morphism $A \to \opn{RHom}_A(R,R)$ is 
an isomorphism. See \cite[Definition 5.2]{Ye1}, and \cite[Definition 
2.5.1]{AJL2}. In \cite[Definition 5.2]{Ye1} we used the name ``dualizing 
complex'' for ``t-dualizing complex'' in the adic case; but that usage is now 
obsolete. By \cite[Proposition 2.5.8]{AJL2}, if $R$ is a t-dualizing complex, 
then the adjunction morphism 
\[ M\to \opn{RHom}_A( \opn{RHom}_A(M,R),R) \] 
is an isomorphism for any 
$M \in \cat{D}(\cat{Mod} A)_{\a \tup{-cof}}$. 

The characterization of cohomologically $\a$-adically cofinite
complexes in Proposition \ref{prop:3} is not very practical, since it is very
hard to compute $\mrm{L} \Lambda_{\a} (M)$. Another characterization of the
category $\cat{D}(\cat{Mod} A)^{\mrm{b}}_{\a \tup{-cof}}$
was proposed in \cite[Problem 5.7]{Ye1}; but at the time we could not prove
that it is correct. This is solved in
Theorem \ref{thm:cof.1} below.
\end{rem}

\begin{lem} \label{lem:31}
Let $L, K \in \cat{D}(\cat{Mod} A)^{\mrm{b}}$. Assume that
$\opn{Ext}^i_A(A_0, L)$ and $\mrm{H}^i (K)$ are finitely generated $A_0$-modules
for all $i$. Then $\opn{Ext}^i_A(K, L)$
are finitely generated $A$-modules for all $i$.
\end{lem}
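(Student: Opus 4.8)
The plan is to reduce, by dévissage on $K$, to the case where $K$ is a single finitely generated $A_0$-module, and then to convert the $\opn{Ext}$-computation over $A$ into one over the noetherian ring $A_0$ by means of a change-of-rings adjunction. First I would record the auxiliary object $L' := \opn{RHom}_A(A_0, L)$. Acting on the first argument endows $L'$ with a natural structure of a complex of $A_0$-modules, so $L' \in \cat{D}(\cat{Mod} A_0)$; it is bounded below because $L$ is, and its cohomology modules $\mrm{H}^i(L') = \opn{Ext}^i_A(A_0, L)$ are finitely generated over $A_0$ by hypothesis. Hence $L' \in \cat{D}_{\mrm{f}}^+(\cat{Mod} A_0)$. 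I do not expect $L'$ to be bounded above, since $A_0$ may have infinite projective dimension over $A$ when $A$ is not regular.

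For the dévissage I would induct on $\opn{amp}(\mrm{H}(K))$. Writing $s := \sup(\mrm{H}(K))$, smart truncation yields a distinguished triangle $\tau^{\le s-1} K \to K \to \mrm{H}^s(K)[-s] \to (\tau^{\le s-1}K)[1]$ in $\cat{D}(\cat{Mod} A)$, in which $\tau^{\le s-1}K$ has strictly smaller amplitude and again has finitely generated $A_0$-module cohomology, while $\mrm{H}^s(K)[-s]$ is a single finitely generated $A_0$-module. Applying $\opn{RHom}_A(-, L)$ gives a long exact sequence of $\opn{Ext}$-modules; since $A$ is noetherian, finitely generated $A$-modules are closed under submodules, quotients and extensions, so it suffices to treat the base case in which $K = M$ is a single finitely generated $A_0$-module placed in one degree.

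For the base case the crucial point is that $M$ is now genuinely an object of $\cat{D}(\cat{Mod} A_0)$, so the derived adjunction between restriction of scalars along $A \to A_0$ and its right adjoint $\opn{RHom}_A(A_0, -)$ applies and yields a canonical isomorphism $\opn{RHom}_A(M, L) \cong \opn{RHom}_{A_0}(M, L')$. It then remains to show that $\opn{RHom}_{A_0}(M, L')$ has finitely generated cohomology over $A_0$. Since $M$ is a finitely generated module over the noetherian ring $A_0$ and $L' \in \cat{D}_{\mrm{f}}^+(\cat{Mod} A_0)$, I would invoke the hyper-$\opn{Ext}$ spectral sequence $E_2^{p,q} = \opn{Ext}^p_{A_0}(M, \mrm{H}^q(L')) \Rightarrow \opn{Ext}^{p+q}_{A_0}(M, L')$: each $E_2^{p,q}$ is finitely generated over $A_0$, and for each total degree only finitely many of them are nonzero (as $q$ is bounded below and $p \ge 0$), so the abutment $\opn{Ext}^{i}_{A_0}(M, L')$ is finitely generated over $A_0$. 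Finally a finitely generated $A_0$-module is finitely generated over $A$ via the surjection $A \to A_0$, giving the claim.

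The main obstacle is precisely the base-case reduction: a general bounded complex $K$ with $A_0$-module cohomology need not be isomorphic in $\cat{D}(\cat{Mod} A)$ to a complex of $A_0$-modules, so the adjunction $\opn{RHom}_A(K,L) \cong \opn{RHom}_{A_0}(K, L')$ cannot be applied to $K$ directly; it is the truncation argument that reduces matters to honest $A_0$-modules, where the adjunction is legitimate. A secondary technical point is that $L'$ may fail to be bounded above, which is why I rely on the hyper-$\opn{Ext}$ spectral sequence rather than on a finite projective resolution to deduce finite generation.
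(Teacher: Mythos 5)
Your proposal is correct and follows essentially the same route as the paper's proof: the same d\'evissage on $\opn{amp}(\mrm{H}(K))$ by smart truncation and the long exact $\opn{Ext}$ sequence (using that finitely generated modules over the noetherian ring $A$ are closed under subquotients and extensions), and the same base case, reducing $\opn{RHom}_A(M,L)$ to $\opn{RHom}_{A_0}(M,\opn{RHom}_A(A_0,L))$ via the change-of-rings (Hom-tensor) adjunction. The only difference is expository: where the paper simply asserts that $\opn{RHom}_{A_0}$ of a finitely generated module into $\cat{D}_{\mrm{f}}(\cat{Mod} A_0)^{+}$ again lies in $\cat{D}_{\mrm{f}}(\cat{Mod} A_0)^{+}$, you verify this standard fact explicitly with the hyper-$\opn{Ext}$ spectral sequence.
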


\begin{proof}
Step 1. Suppose $K$ is a single $A$-module (sitting in degree $0$).
Then $K$ is a finitely generated $A_0$-module. Define
\[ M := \opn{RHom}_A(A_0, L) \in \cat{D}(\cat{Mod} A_0)^{+} . \]
By Hom-tensor adjunction we get
\[ \opn{RHom}_A(K, L) \cong
\opn{RHom}_{A_0}(K, \opn{RHom}_A(A_0, L)) =
\opn{RHom}_{A_0}(K, M )  \]
in $\cat{D}(\cat{Mod} A_0)^+$.
But the assumption is that
$M \in \cat{D}_{\tup{f}}(\cat{Mod} A_0)^{+}$;
and hence we also have
\[ \opn{RHom}_{A_0}(K, M) \in  \cat{D}_{\mrm{f}}(\cat{Mod} A_0)^{+} . \]
This shows that $\opn{Ext}^i_A(K, L)$ are finitely generated $A_0$-modules.

\medskip \noindent
Step 2. Now $K$ is a bounded complex, and $\mrm{H}^i (K)$ are finitely generated
$A_0$-modules for all $i$. The proof is by induction on the amplitude of
$\mrm{H} (K)$. The induction starts with $\opn{amp} (\mrm{H} (K)) = 0$, and this
is covered by Step 1. If
$\opn{amp} (\mrm{H} (K)) > 0$, then using smart truncation
(as in the proof of Theorem \ref{thm:251}) we construct a
distinguished triangle
$K' \to K \to K'' \to K'[1]$ in $\cat{D}(\cat{Mod} A)$,
where $\mrm{H} (K')$ and $\mrm{H} (K'')$ have smaller amplitudes, and
$\mrm{H}^i (K')$ and $\mrm{H}^i (K'')$ are finitely generated $A_0$-modules for
all $j$. By applying $\opn{RHom}_A(-, L)$ to the triangle above we obtain a
distinguished triangle
\[ \opn{RHom}_A(K'', L) \to \opn{RHom}_A(K, L) \to
\opn{RHom}_A(K', L) \to , \]
and hence a long exact sequence
\[ \cdots \to \opn{Ext}^i_A(K'', L) \to \opn{Ext}^i_A(K, L)
 \to \opn{Ext}^i_A(K', L) \to  \cdots . \]
of $A$-modules. {}From this we conclude that $\opn{Ext}^i_A(K, L)$ are finitely
generated (and $\a$-torsion) $A$-modules.
\end{proof}

\begin{lem} \label{lem:4}
Let $L \in \cat{D}(\cat{Mod} A)^{\mrm{b}}$ and $i_0 \in \Z$. Assume that
$\mrm{H}^i (L) = 0$ for all $i > i_0$, and that
$\opn{Ext}^i_A(A_0, L)$ is finitely generated over $A_0$ for all $i$.
Then $\mrm{H}^{i_0}  (A_0 \otimes^{\mrm{L}}_A L)$
is finitely generated over $A_0$.
\end{lem}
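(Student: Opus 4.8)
The plan is to convert the hypothesis on $\opn{Ext}^i_A(A_0, L)$ into a finiteness statement for $\opn{Ext}^i_A(K, L)$, where $K$ is the Koszul complex, and then to exploit Koszul self-duality to pass from Ext to Tor. First I would choose a finite generating sequence $\bsym{a} = (a_1, \ldots, a_n)$ for $\a$ and set $K := \opn{K}(A; \bsym{a})$, viewed as an object of $\cat{D}(\cat{Mod} A)^{\mrm{b}}$. Its cohomology modules $\mrm{H}^i(K)$ are finitely generated over $A$ (being subquotients of finite free modules over the noetherian ring $A$) and are annihilated by $\a$, hence are finitely generated $A_0$-modules. Since by assumption the modules $\opn{Ext}^i_A(A_0, L)$ are also finitely generated over $A_0$ for all $i$, Lemma \ref{lem:31} applies and yields that $\opn{Ext}^i_A(K, L)$ is a finitely generated $A$-module for every $i$.

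Next I would use that $K$ is a perfect complex, so that $\opn{RHom}_A(K, L) \cong K^\vee \otimes^{\mrm{L}}_A L$ with $K^\vee := \opn{Hom}_A(K, A)$, together with the Koszul self-duality $K^\vee \cong K[n]$ (up to sign). Consequently $K \otimes^{\mrm{L}}_A L$ and $\opn{RHom}_A(K, L)$ differ only by the shift $[n]$, so that each $\mrm{H}^j(K \otimes^{\mrm{L}}_A L)$ is isomorphic to some $\opn{Ext}^i_A(K, L)$, and is therefore finitely generated over $A$. (Here $K \otimes^{\mrm{L}}_A L = K \otimes_A L$, since $K$ consists of free modules.)

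Finally I would isolate the top degree by the K\"unneth trick. Since $\sup(\mrm{H}(K)) = 0$ with $\mrm{H}^0(K) = A_0$, and $\sup(\mrm{H}(L)) \le i_0$, Lemma \ref{lem:nak.1} gives
\[ \mrm{H}^{i_0}(K \otimes^{\mrm{L}}_A L) \cong \mrm{H}^0(K) \otimes_A \mrm{H}^{i_0}(L) = A_0 \otimes_A \mrm{H}^{i_0}(L) , \]
while the same lemma applied with $A_0$ in place of $K$ gives $\mrm{H}^{i_0}(A_0 \otimes^{\mrm{L}}_A L) \cong A_0 \otimes_A \mrm{H}^{i_0}(L)$. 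Hence $\mrm{H}^{i_0}(A_0 \otimes^{\mrm{L}}_A L) \cong \mrm{H}^{i_0}(K \otimes^{\mrm{L}}_A L)$, which the previous step shows to be finitely generated over $A$; being annihilated by $\a$, it is finitely generated over $A_0$, as required.

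The step I expect to require the most care is the middle one, the passage from Ext to Tor via Koszul self-duality: one must fix conventions for $K$ and $K^\vee$ so that the degree shift by $n$ is the correct one, and verify $\opn{RHom}_A(K, L) \cong (K \otimes^{\mrm{L}}_A L)[n]$ for the perfect complex $K$. Fortunately the precise value of the shift is irrelevant here, since Lemma \ref{lem:31} delivers finite generation of $\opn{Ext}^i_A(K, L)$ simultaneously for all $i$, so finite generation of $\mrm{H}^j(K \otimes^{\mrm{L}}_A L)$ holds for every $j$ regardless of the shift.
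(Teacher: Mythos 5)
Your proposal is correct and follows essentially the same route as the paper's proof: apply Lemma \ref{lem:31} to the Koszul complex $K$ and $L$, use Koszul self-duality to relate $\opn{RHom}_A(K,L)$ to $K \otimes^{\mrm{L}}_A L$, and then isolate the top degree via the K\"unneth trick (Lemma \ref{lem:nak.1}). The only (cosmetic) difference is that you apply the K\"unneth trick to $K \otimes^{\mrm{L}}_A L$ after transferring finiteness across the shift, whereas the paper works directly with $\opn{Hom}_A(K,L) \cong K^{\vee} \otimes_A L$ and uses $\mrm{H}^n(K^{\vee}) \cong \mrm{H}^0(K) \cong A_0$; your observation that the exact sign of the shift is immaterial is also valid.
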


\begin{proof}
It is clear that $\mrm{H}^{i_0}  (A_0 \otimes^{\mrm{L}}_A L)$
is an $A_0$-module. We have to prove that it is finitely generated as
$A$-module.

Choose a generating sequence $\bsym{a} = (a_1, \ldots, a_n)$ of the ideal
$\a$. Let $K := \opn{K}(A, \bsym{a})$ be the Koszul complex.
We know that $K$ is a bounded complex of finitely generated free $A$-modules;
the cohomologies $\mrm{H}^i (K)$ are all finitely generated
$A_0$-modules; they vanish unless $-n \leq i \leq 0$; and
$\mrm{H}^0 (K) \cong A_0$. Also $K$ has the self-duality property
$K^{\vee} \cong K[-n]$, where
$K^{\vee} := \opn{Hom}_A (K, A)$.

Let us consider the complex
$M := \opn{Hom}_A (K, L)$.
By Lemma \ref{lem:31} we know that $\mrm{H}^i (M)$ are all finitely generated
$A$-modules. But there is also an isomorphism of complexes
$M \cong K^{\vee} \otimes_A L$.
By the K\"unneth trick (Lemma \ref{lem:nak.1}) we conclude that
\[ \begin{aligned}
& \mrm{H}^{n + i_0} (M) \cong
\mrm{H}^{n} (K^{\vee}) \otimes_A \mrm{H}^{i_0} (L)
\cong \mrm{H}^{0} (K) \otimes_A \mrm{H}^{i_0} (L) \\
& \qquad \cong A_0 \otimes_A \mrm{H}^{i_0} (L)
 \cong  \mrm{H}^{i_0} (A_0 \otimes^{\mrm{L}}_A  L ) .
\end{aligned} \]
So
$\mrm{H}^{i_0} (A_0 \otimes^{\mrm{L}}_A  L )$
is a finitely generated $A$-module.
\end{proof}

\begin{lem} \label{lem:cof.1}
Let $N \in \cat{D}(\cat{Mod} A)^{\mrm{b}}_{\a \tup{-com}}$. The following two
conditions are equivalent:
\begin{enumerate}
\rmitem{i} For every $j \in \Z$ the $A$-module $\mrm{H}^j (N)$ is
finitely generated.
\rmitem{ii} For every $j \in \Z$ the $A_0$-module
$\opn{Ext}^j_A(A_0, N)$ is finitely generated.
\end{enumerate}
\end{lem}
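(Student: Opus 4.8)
The direction (i) $\Rightarrow$ (ii) is the easy one. The plan is to observe that if all cohomology modules $\mrm{H}^j(N)$ are finitely generated, then $N \in \cat{D}_{\mrm{f}}(\cat{Mod} A)^{\mrm{b}}$, and so by Lemma \ref{lem:31} (applied with $K := A_0$ and $L := N$, noting $\mrm{H}^i(A_0)$ is trivially finitely generated over $A_0$) the modules $\opn{Ext}^j_A(A_0, N)$ are finitely generated over $A$, hence over $A_0$. So the work is essentially in the reverse implication.

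For (ii) $\Rightarrow$ (i) the plan is to induct downward on the degrees where $N$ has nonzero cohomology. First I would set $i_0 := \sup(\mrm{H}(N))$, which is finite since $N$ is bounded. I want to show $\mrm{H}^j(N)$ is finitely generated for all $j$, proceeding by descending induction starting at $j = i_0$. For the top degree, the hypothesis (ii) together with Lemma \ref{lem:4} (applied with $L := N$ and this $i_0$) gives that $\mrm{H}^{i_0}(A_0 \otimes^{\mrm{L}}_A N)$ is finitely generated over $A_0$; then, since $N$ is cohomologically complete and $\mrm{H}^i(N) = 0$ for $i > i_0$, the Cohomologically Complete Nakayama Theorem \ref{thm:23} yields that $\mrm{H}^{i_0}(N)$ is finitely generated over $A$.

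The key mechanism of the inductive step is to peel off the top cohomology and apply the same argument to a truncation. Concretely, I would use smart truncation to form a distinguished triangle $N' \to N \to N'' \to N'[1]$, where $N''$ carries the top cohomology $\mrm{H}^{i_0}(N)$ (now known finitely generated) and $N'$ has cohomology concentrated in strictly lower degrees. Since finitely generated modules are $\a$-adically complete, $N''$ is cohomologically complete by Theorem \ref{thm:251}, and as $\cat{D}(\cat{Mod} A)_{\a \tup{-com}}$ is a triangulated subcategory, $N'$ is cohomologically complete too. The point is then to check that $N'$ again satisfies condition (ii): the long exact sequence in $\opn{Ext}_A(A_0, -)$ coming from the triangle, combined with the finite generation of $\opn{Ext}^j_A(A_0, N)$ (hypothesis) and of $\opn{Ext}^j_A(A_0, N'')$ (since $N''$ has finitely generated cohomology, by (i)$\Rightarrow$(ii) applied to it), forces $\opn{Ext}^j_A(A_0, N')$ to be finitely generated. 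Then the induction hypothesis applies to $N'$, whose $\sup$ is strictly smaller, and since $\mrm{H}^j(N) \cong \mrm{H}^j(N')$ for $j < i_0$, we recover finite generation of all the remaining cohomology of $N$.

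The main obstacle I anticipate is the bookkeeping needed to verify that the truncated complex $N'$ inherits hypothesis (ii); this requires the long exact $\opn{Ext}$ sequence together with the observation that the ``two out of three'' property for finite generation holds in such a sequence. The heavy analytic input — the passage from the finiteness of $\opn{Ext}$ over $A_0$ to finiteness of $\mrm{H}^{i_0}(A_0 \otimes^{\mrm{L}}_A N)$, and thence to finiteness of $\mrm{H}^{i_0}(N)$ via complete Nakayama — is already packaged in Lemmas \ref{lem:4} and \ref{lem:31} and Theorem \ref{thm:23}, so the remaining difficulty is mainly organizational rather than conceptual.
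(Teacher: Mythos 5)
Your implication (ii) $\Rightarrow$ (i) is essentially the paper's own proof: the same descending induction via smart truncation, with the truncated piece inheriting cohomological completeness (Proposition \ref{prop:263} plus the fact that $\cat{D}(\cat{Mod} A)^{\mrm{b}}_{\a \tup{-com}}$ is triangulated) and inheriting condition (ii) via the long exact $\opn{Ext}$-sequence, and with Lemma \ref{lem:4} and Theorem \ref{thm:23} supplying finiteness at the top degree. The only difference is organizational: you induct on the truncated complex $N'$ (whose amplitude drops), while the paper keeps $N$ fixed and inducts on the degree $i$, truncating $N$ at $i$ in each step; these two inductions are interchangeable.

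However, your justification of (i) $\Rightarrow$ (ii) is circular. Lemma \ref{lem:31}, applied with $K := A_0$ and $L := N$, has as one of its hypotheses that $\opn{Ext}^i_A(A_0, L) = \opn{Ext}^i_A(A_0, N)$ is finitely generated over $A_0$ for all $i$ --- which is exactly the conclusion (ii) you are trying to reach. (You appear to have read the hypothesis on $L$ as finite generation of $\mrm{H}^i(L)$, but that is not what the lemma says; swapping the roles, $K := N$ and $L := A_0$, does not help either, since then you would need $\mrm{H}^i(N)$ to be finitely generated over $A_0$ rather than over $A$.) This matters beyond the easy direction, because your inductive step in (ii) $\Rightarrow$ (i) invokes (i) $\Rightarrow$ (ii) for the truncated piece $N''$. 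The repair is cheap: for $N \in \cat{D}_{\mrm{f}}(\cat{Mod} A)^{\mrm{b}}$ over a noetherian ring, $\opn{Ext}^j_A(A_0, N)$ is finitely generated over $A$ (resolve the finite $A$-module $A_0$ by finitely generated free $A$-modules, or cite \cite[Proposition II.3.3]{RD}, as the paper does), and being an $A_0$-module it is then finitely generated over $A_0$. With this substitution your argument is complete and agrees with the paper's.
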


\begin{proof}
(i) $\Rightarrow$ (ii): It suffices to prove that
$\opn{Ext}^j_A(A_0, N)$ are finitely generated $A$-modules for all $j$.
This follows from \cite[Proposition II.3.3]{RD}.

\medskip \noindent
(ii) $\Rightarrow$ (i):
The converse is more difficult. Let us choose an
integer $i_0$ such that $\mrm{H}^i (N) = 0$ for all $i > i_0$.
We are going to prove that $\mrm{H}^i (N)$ is finitely generated by
descending induction on $i$  (as in the proof of Theorem \ref{thm:251}), 
starting from $i = i_0 + 1$ (which is trivial of
course). So let's suppose that $\mrm{H}^j (N)$ is finitely generated for all
$j > i$, and we shall prove that
$\mrm{H}^i (N)$ is also finitely generated.
Using smart truncation of $N$ at $i$ (cf.\ \cite[pages 69-70]{RD}),  there is a 
distinguished triangle
\begin{equation} \label{eqn:25}
L \xar{\phi} N \xar{\psi} M \to L[1]
\end{equation}
in $\cat{D}(\cat{Mod} A)$, such that: $\mrm{H}^j (L) = 0$ and
$\mrm{H}^j(\psi) : \mrm{H}^j (N) \to \mrm{H}^j (M)$
is bijective for all $j > i$; and
$\mrm{H}^j (M) = 0$ and
$\mrm{H}^j(\phi) : \mrm{H}^j (L) \to \mrm{H}^j (N)$
is bijective for all $j \leq i$.
By the induction hypothesis the bounded complex $M$ has finitely generated
cohomologies; so by Proposition \ref{prop:263} it
is cohomologically complete. Since $N$ is also
cohomologically complete, and $\cat{D}(\cat{Mod} A)^{\mrm{b}}_{\a \tup{-com}}$
is a triangulated category, it follows that $L$ is
cohomologically complete too.

We know from the implication ``(i) $\Rightarrow$ (ii)'', applied to $M$, that
$\opn{Ext}^j_A(A_0, M)$
is a finitely generated $A_0$-module for every $j$. The exact sequence
\[ \opn{Ext}^{j-1}_A(A_0, M) \to
\opn{Ext}^{j}_A(A_0, L) \to
\opn{Ext}^{j}_A(A_0, N) \]
coming from (\ref{eqn:25}) shows that $\opn{Ext}^j_A(A_0, L)$ is also finitely
generated for every $j$. So according to Lemma \ref{lem:4} the $A_0$-module
$\mrm{H}^{i}(A_0 \otimes^{\mrm{L}}_A L)$ is finitely generated. We can now use
Theorem \ref{thm:23} to conclude that the $A$-module
$\mrm{H}^{i} (L)$ is finitely generated. But
$\mrm{H}^{i} (L) \cong \mrm{H}^{i} (N)$.
\end{proof}

The main result of this section is this:

\begin{thm} \label{thm:cof.1}
Assuming Setup \tup{\ref{set:261}}, let
$M \in \cat{D}(\cat{Mod} A)^{\mrm{b}}_{\a \tup{-tor}}$. The following two
conditions are equivalent\tup{:}
\begin{enumerate}
\rmitem{i} $M$ is cohomologically $\a$-adically cofinite.
\rmitem{ii} For every $j \in \Z$ the $A_0$-module
$\opn{Ext}^j_A(A_0, M)$ is finitely generated.
\end{enumerate}
\end{thm}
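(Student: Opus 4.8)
The plan is to reduce the statement to Lemma \ref{lem:cof.1} by passing through the MGM Equivalence, so that the finite generation of the $\opn{Ext}$ modules built from $M$ gets translated into finite generation of the cohomology modules of the derived completion $N := \mrm{L}\Lambda_{\a}(M)$. First I would set $N := \mrm{L}\Lambda_{\a}(M)$. By the MGM Equivalence, $N$ lies in $\cat{D}(\cat{Mod} A)^{\mrm{b}}_{\a\tup{-com}}$ and $M \cong \mrm{R}\Gamma_{\a}(N)$. By Proposition \ref{prop:3}, condition (i) (that $M$ is cohomologically $\a$-adically cofinite) is equivalent to $N \in \cat{D}_{\mrm{f}}(\cat{Mod} A)^{\mrm{b}}$; since $N$ is already bounded, this says exactly that $\mrm{H}^j(N)$ is a finitely generated $A$-module for every $j$. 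That is precisely condition (i) of Lemma \ref{lem:cof.1} applied to the complete complex $N$, which by that lemma is equivalent to: $\opn{Ext}^j_A(A_0, N)$ is finitely generated for every $j$.

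It then remains to identify $\opn{Ext}^j_A(A_0, N)$ with $\opn{Ext}^j_A(A_0, M)$, and this is the crux of the matter. Since $A_0 = A/\a$ is killed by $\a$, it is an $\a$-torsion module, so its cohomology is torsion and hence $A_0 \in \cat{D}(\cat{Mod} A)_{\a\tup{-tor}}$; consequently $\mrm{R}\Gamma_{\a}(A_0) \cong A_0$. Invoking the adjunction between $\mrm{R}\Gamma_{\a}$ and $\mrm{L}\Lambda_{\a}$ from \cite{PSY1}, I would write
\[ \opn{RHom}_A(A_0, M) \cong \opn{RHom}_A(\mrm{R}\Gamma_{\a}(A_0), M) \cong \opn{RHom}_A(A_0, \mrm{L}\Lambda_{\a}(M)) = \opn{RHom}_A(A_0, N) . \]
Passing to cohomology yields $\opn{Ext}^j_A(A_0, M) \cong \opn{Ext}^j_A(A_0, N)$ for all $j$. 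Combining this with the chain of the previous paragraph gives the equivalence of (i) and (ii), completing the proof.

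The genuinely substantive input is Lemma \ref{lem:cof.1}, already established above via the Cohomologically Complete Nakayama Theorem \ref{thm:23}; relative to it the present argument is a formal concatenation of the MGM Equivalence, Proposition \ref{prop:3}, and the torsion--completion adjunction. The one place demanding care, and the main (albeit minor) obstacle, is the final isomorphism: one must use $\mrm{R}\Gamma_{\a}(A_0) \cong A_0$ together with the correct variance of the Greenlees--May adjunction, so that the $\opn{Ext}$ groups over $M$ and over $N = \mrm{L}\Lambda_{\a}(M)$ are matched up in the right order.
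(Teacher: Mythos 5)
Your proposal is correct and is essentially the paper's own proof: the same reduction $N := \mrm{L}\Lambda_{\a}(M)$, the same appeal to Proposition \ref{prop:3} to match condition (i) with condition (i) of Lemma \ref{lem:cof.1}, and the same identification $\opn{Ext}^j_A(A_0, M) \cong \opn{Ext}^j_A(A_0, N)$ to match the conditions (ii). The only (immaterial) difference is how that last identification is justified: you use the Greenlees--May adjunction together with $\mrm{R}\Gamma_{\a}(A_0) \cong A_0$, whereas the paper uses full faithfulness of the MGM equivalence together with $A_0 \cong \mrm{L}\Lambda_{\a}(A_0)$; both are valid consequences of the machinery of \cite{PSY1}.
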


\begin{proof}
Let $N := \mrm{L} \Lambda_{\a} (M)$, so
$N \in \cat{D}(\cat{Mod} A)^{\mrm{b}}_{\a \tup{-com}}$,
and according to Proposition \ref{prop:3} we know that
$N \in \cat{D}_{\mrm{f}}(\cat{Mod} A)^{\mrm{b}}$ if and only if
$M \in \cat{D}(\cat{Mod} A)^{\mrm{b}}_{\a \tup{-cof}}$.
In other words, condition (i) above is equivalent to condition
(i) of Lemma \ref{lem:cof.1}.

On the other hand, since $A_0 \cong \mrm{L} \Lambda_{\a} (A_0)$,
by MGM Equivalence we have
\[ \opn{Ext}^j_A(A_0, M) \cong \opn{Hom}_{\cat{D}(A)}(A_0, M[j]) \cong
\opn{Hom}_{\cat{D}(A)}(A_0, N[j]) \cong \opn{Ext}^j_A(A_0, N) , \]
where $\cat{D}(A) := \cat{D}(\cat{Mod} A)$. So
condition (ii) above is equivalent to condition
(ii) of Lemma \ref{lem:cof.1}.
\end{proof}

\begin{rem}
In the notations of this paper, Lemma 2.5.3 in \cite{AJL2} states that if $R$ is 
 a t-dualizing complex, and if $R$ is cohomologically $\a$-adically cofinite, 
then $\mrm{L} \Lambda_{\a}(R)$ is a c-dualizing complex. The above theorem shows 
that the assumption that $R$ is cohomologically $\a$-adically cofinite is 
redundant. 
\end{rem}

Suppose the ring $A$ admits a t-dualizing complex $R^{\mrm{t}}$; 
see Remark \ref{rem:100}. Following Hartshorne \cite{Ha2} 
we say that a complex $M \in \cat{D}(\cat{Mod} A)$ is {\em cofinite} if 
$M \cong \opn{RHom}_A(N, R^{\mrm{t}})$
for some $N \in \cat{D}_{\mrm{f}}(\cat{Mod} A)$.
Proposition \ref{prop:ha2} below shows that for bounded complexes
the two notions of cofiniteness (ours and that of \cite{Ha2}) coincide. 

\begin{lem} \label{lem:101}
There is a bifunctorial isomorphism 
\[ \mrm{R} \Ga_{\a} (\opn{RHom}_A(M , N)) \cong 
\opn{RHom}_A(M , \mrm{R} \Ga_{\a} (N)) \]
for $M \in \cat{D}_{\mrm{f}}(\cat{Mod} A)^{-}$ and 
$N \in \cat{D}(\cat{Mod} A)^+$.
\end{lem}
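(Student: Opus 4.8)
The plan is to collapse both sides to ordinary (underived) functors by choosing adapted resolutions, and then to check a degreewise identity. Since $A$ is noetherian and $M \in \cat{D}_{\mrm{f}}(\cat{Mod} A)^{-}$, I would first choose a quasi-isomorphism $P \to M$ in which $P$ is a bounded above complex of finitely generated free $A$-modules; such a $P$ is K-projective. Since $N \in \cat{D}(\cat{Mod} A)^{+}$, I would choose a quasi-isomorphism $N \to I$ in which $I$ is a bounded below complex of injective modules; such an $I$ is K-injective. Then $\opn{RHom}_A(M,N)$ is represented by $\opn{Hom}_A(P,I)$.

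Next I would record two boundedness observations. Because each $P^{p}$ is finitely generated free, each term $\opn{Hom}_A(P,I)^{n}=\prod_{p}\opn{Hom}_A(P^{p},I^{n+p})$ is a finite product of injective modules (the product is finite since $P$ is bounded above and $I$ is bounded below), and $\opn{Hom}_A(P,I)$ is itself bounded below; hence it is a bounded below complex of injectives, and therefore K-injective. Thus $\mrm{R}\Ga_{\a}(\opn{RHom}_A(M,N))$ is represented by $\Ga_{\a}(\opn{Hom}_A(P,I))$. On the other side, Lemma \ref{lem:37} shows that $\Ga_{\a}(I)$ is again a bounded below complex of injectives, hence K-injective and a representative of $\mrm{R}\Ga_{\a}(N)$; consequently $\opn{RHom}_A(M,\mrm{R}\Ga_{\a}(N))$ is represented by $\opn{Hom}_A(P,\Ga_{\a}(I))$.

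It then remains to produce a natural isomorphism of complexes $\Ga_{\a}(\opn{Hom}_A(P,I))\cong\opn{Hom}_A(P,\Ga_{\a}(I))$, and this is the crux. The key degreewise statement is that for a finitely generated module $Q$ and any module $J$, a homomorphism $\phi\colon Q\to J$ is an $\a$-torsion element of $\opn{Hom}_A(Q,J)$ if and only if $\phi(Q)\subseteq\Ga_{\a}(J)$: if $\a^{i}\phi=0$ then $\phi(Q)\subseteq(0:_{J}\a^{i})$, while conversely, if $\phi$ carries a finite generating set of $Q$ into $\Ga_{\a}(J)$, then a single power of $\a$ kills all of these images and hence annihilates $\phi$. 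This gives $\Ga_{\a}(\opn{Hom}_A(Q,J))=\opn{Hom}_A(Q,\Ga_{\a}(J))$, naturally in $Q$ and $J$. Applying it with $Q=P^{p}$, and using that the products computing $\opn{Hom}_A(P,-)$ in each degree are finite (so that $\Ga_{\a}$ commutes with them), I obtain the desired isomorphism of complexes, compatible with the differentials.

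Splicing the three identifications together yields $\mrm{R}\Ga_{\a}(\opn{RHom}_A(M,N))\cong\opn{RHom}_A(M,\mrm{R}\Ga_{\a}(N))$, and bifunctoriality is automatic since each step is natural in $M$ and $N$. I expect the main obstacle to be exactly the torsion-commutes-with-$\opn{Hom}$ step: it genuinely uses the finite generation of the cohomology of $M$ (so that I may take $P$ with finitely generated terms), and it forces me to keep careful track of the boundedness conditions that make the relevant products finite and the Hom-complexes K-injective.
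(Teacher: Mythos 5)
Your proof is correct and takes essentially the same route as the paper's: resolve $M$ by a bounded above complex of finitely generated projectives (you use frees) and $N$ by a bounded below complex of injectives, then pass $\Gamma_{\a}$ through the Hom-complex via the identity $\Gamma_{\a}(\opn{Hom}_A(P,I)) = \opn{Hom}_A(P,\Gamma_{\a}(I))$. The paper simply asserts this middle equality and the adaptedness of the resolutions; your degreewise torsion argument and the K-injectivity bookkeeping are exactly the details it leaves implicit.
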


\begin{proof}
Let $N \to I$ be a bounded below injective resolution, and let 
$P \to N$ be a bounded above resolution consisting of finitely generated 
projective $A$-modules. We get 
\[ \begin{aligned}
&  \mrm{R} \Ga_{\a} (\opn{RHom}_A(M , N)) \cong 
\Gamma_{\a}(\opn{Hom}_A(P, I)) \\
& \qquad = \opn{Hom}_A(P, \Gamma_{\a}(I))
\cong \opn{RHom}_A(M , \mrm{R} \Ga_{\a} (N)) . 
\end{aligned} \]
\end{proof}

\begin{prop} \label{prop:ha2}
Assuming Setup \tup{\ref{set:261}}, suppose that $R^{\mrm{t}}$ is a t-dualizing 
complex over $A$. The following two conditions are equivalent for 
$M \in \cat{D}(\cat{Mod} A)^{\mrm{b}}$\tup{:}
\begin{enumerate}
\rmitem {i} The complex $M$ is cohomologically $\a$-adically cofinite, namely 
$M \cong \mrm{R}\Gamma_{\a}(L)$ for some 
$L \in \cat{D}_{\mrm{f}}(\cat{Mod} A)^{\mrm{b}}$.
\rmitem {ii} The complex $M$ is cofinite, namely there is a complex 
$N \in \cat{D}_{\mrm{f}}(\cat{Mod} A)$ 
such that $M \cong \opn{RHom}_A(N, R^{\mrm{t}})$.
\end{enumerate}

Moreover, when these conditions hold, the complex $N$ has bounded cohomology, 
and $N \cong  \opn{RHom}_A(L, R)$, where 
$R := \mrm{L} \Lambda_{\a}(R^{\mrm{t}})$.
\end{prop}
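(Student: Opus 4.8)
The plan is to carry everything through the complex $R := \mrm{L}\La_{\a}(R^{\mrm{t}})$ and the two duality functors $D := \opn{RHom}_A(-,R)$ and $D^{\mrm{t}} := \opn{RHom}_A(-,R^{\mrm{t}})$. First I would record the facts about $R$. Since $R^{\mrm{t}}$ is t-dualizing it lies in $\cat{D}(\cat{Mod} A)^{\mrm{b}}_{\a\tup{-tor}}$ and has all $\opn{Ext}^i_A(A_0,R^{\mrm{t}})$ finitely generated, so Theorem \ref{thm:cof.1} applies to $R^{\mrm{t}}$ and shows it is cohomologically cofinite; Proposition \ref{prop:3} then gives $R \in \cat{D}_{\mrm{f}}(\cat{Mod} A)^{\mrm{b}}$, while MGM Equivalence yields $R^{\mrm{t}} \cong \mrm{R}\Ga_{\a}(R)$. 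This is exactly the assertion (see the remark following Theorem \ref{thm:cof.1}) that $R$ is a c-dualizing complex; in particular $D$ is a contravariant equivalence of $\cat{D}_{\mrm{f}}(\cat{Mod} A)$ with $D \circ D \cong \opn{id}$, preserving $\cat{D}_{\mrm{f}}(\cat{Mod} A)^{\mrm{b}}$. I would also use two standard tools: Lemma \ref{lem:101}, and the Greenlees--May adjunction $\opn{RHom}_A(\mrm{R}\Ga_{\a}X,Y) \cong \opn{RHom}_A(X,\mrm{L}\La_{\a}Y)$ from \cite{PSY1}. Finally, $\a$-adic completeness of $A$ forces $\a \subseteq \opn{rad}(A)$, so every nonzero finitely generated module has support meeting $V(\a)$; hence $\mrm{R}\Ga_{\a}$ is conservative on $\cat{D}_{\mrm{f}}(\cat{Mod} A)^{\mrm{b}}$.

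For (i) $\Rightarrow$ (ii), starting from $M \cong \mrm{R}\Ga_{\a}(L)$ with $L \in \cat{D}_{\mrm{f}}(\cat{Mod} A)^{\mrm{b}}$, I would set $N := D(L) = \opn{RHom}_A(L,R)$, which again lies in $\cat{D}_{\mrm{f}}(\cat{Mod} A)^{\mrm{b}}$, in particular is bounded. Then, since $R^{\mrm{t}} \cong \mrm{R}\Ga_{\a}R$ and $N \in \cat{D}_{\mrm{f}}(\cat{Mod} A)^{-}$, Lemma \ref{lem:101} gives $\opn{RHom}_A(N,R^{\mrm{t}}) \cong \mrm{R}\Ga_{\a}\opn{RHom}_A(N,R) = \mrm{R}\Ga_{\a}(D D(L)) \cong \mrm{R}\Ga_{\a}(L) \cong M$, using biduality $DD(L) \cong L$. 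This proves (ii) with the very complex $N$ predicted by the ``moreover''.

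For (ii) $\Rightarrow$ (i) I would avoid proving boundedness of $N$ at this stage, and instead verify the two hypotheses of Theorem \ref{thm:cof.1} for $M$. That $M \in \cat{D}(\cat{Mod} A)^{\mrm{b}}_{\a\tup{-tor}}$ follows since each $\mrm{H}^j(M)$ is, via the hyper-Ext spectral sequence $\opn{Ext}^p_A(\mrm{H}^q(N),R^{\mrm{t}}) \Rightarrow \mrm{H}^{p-q}(M)$ (finite, as $R^{\mrm{t}}$ has finite injective dimension), a finite iterated extension of the $\a$-torsion modules $\opn{Ext}^p_A(\mrm{H}^q(N),R^{\mrm{t}})$, these being torsion because $\mrm{H}^q(N)$ is finitely generated and $R^{\mrm{t}}$ is torsion. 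For finiteness of $\opn{Ext}^j_A(A_0,M)$, adjunction gives $\opn{RHom}_A(A_0,M) \cong \opn{RHom}_{A_0}(A_0 \otimes^{\mrm{L}}_A N, E)$, where $E := \opn{RHom}_A(A_0,R^{\mrm{t}}) \in \cat{D}_{\mrm{f}}(\cat{Mod} A_0)^{\mrm{b}}$; since $A_0 \otimes^{\mrm{L}}_A N \in \cat{D}_{\mrm{f}}(\cat{Mod} A_0)$ and $E$ is bounded with finitely generated cohomology, only finitely many cohomologies of $A_0 \otimes^{\mrm{L}}_A N$ affect a given degree, so each $\opn{Ext}^j_A(A_0,M)$ is a finitely generated $A_0$-module. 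Theorem \ref{thm:cof.1} now yields (i). Note that this settles the equivalence (i) $\Leftrightarrow$ (ii) with no boundedness input on $N$.

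It remains to prove the ``moreover'', and this is the step I expect to be the main obstacle: I must show that the given, a priori unbounded, $N$ is bounded and is identified with $\opn{RHom}_A(L,R)$. The clean half is the identification once boundedness is known: for $N \in \cat{D}_{\mrm{f}}(\cat{Mod} A)^{-}$, combining Lemma \ref{lem:101} (which gives $D^{\mrm{t}}(N) \cong \mrm{R}\Ga_{\a}D(N)$), the Greenlees--May adjunction, and biduality $DD \cong \opn{id}$ yields $D^{\mrm{t}}D^{\mrm{t}}(N) \cong N$; and since $M$ is cofinite, writing $L = \mrm{L}\La_{\a}(M)$ and $M \cong \mrm{R}\Ga_{\a}L$, the same adjunction gives $D^{\mrm{t}}(M) \cong \opn{RHom}_A(L,\mrm{L}\La_{\a}R^{\mrm{t}}) = \opn{RHom}_A(L,R)$, so altogether $N \cong D^{\mrm{t}}(M) \cong \opn{RHom}_A(L,R) \in \cat{D}_{\mrm{f}}(\cat{Mod} A)^{\mrm{b}}$, which is bounded. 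The real work is therefore to show that $N$ is bounded above. Here I would tensor with the Koszul complex $K := \opn{K}(A;\bsym{a})$ on a generating sequence of $\a$: its self-duality $K^{\vee} \cong K[-n]$ and tensor--Hom adjunction give $K \otimes_A M \cong D^{\mrm{t}}(K \otimes_A N)[n]$, whose left-hand side is bounded because $M$ is bounded and $K$ is perfect. As $K \otimes_A N$ has finitely generated $\a$-torsion cohomology, the functor $\opn{RHom}_A(-,R^{\mrm{t}})$ on it reduces, via $E = \opn{RHom}_A(A_0,R^{\mrm{t}})$, to the genuine duality of the dualizing complex $E$ over $A_0$; together with Nakayama (using $\a \subseteq \opn{rad}(A)$) this reflects boundedness, forcing $K \otimes_A N$, and hence $N$, to be bounded. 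I expect the careful bookkeeping of this last reflection to be the delicate point; should it prove awkward, one can instead invoke the biduality of $R^{\mrm{t}}$ on all of $\cat{D}_{\mrm{f}}(\cat{Mod} A)$ from \cite{AJL2}, which gives $N \cong D^{\mrm{t}}(M)$ directly and hence its boundedness.
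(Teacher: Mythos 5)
Your preliminary facts about $R$ and your proof of (i) $\Rightarrow$ (ii) are correct, and they use the same toolkit as the paper (Theorem \ref{thm:cof.1} applied to $R^{\mrm{t}}$, Lemma \ref{lem:101}, MGM equivalence, and biduality for the c-dualizing complex $R$). The genuine gap is in your primary route for (ii) $\Rightarrow$ (i), at the step ``since $A_0 \otimes^{\mrm{L}}_A N \in \cat{D}_{\mrm{f}}(\cat{Mod} A_0)$''. This is not a formal consequence of $N \in \cat{D}_{\mrm{f}}(\cat{Mod} A)$ when $N$ is unbounded above: for fixed $j$, the module $\mrm{H}^j(A_0 \otimes^{\mrm{L}}_A N)$ receives contributions from $\opn{Tor}^A_i(A_0, \mrm{H}^{j+i}(N))$ for \emph{all} $i \geq 0$, and $A_0$ typically has infinite flat dimension over $A$. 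Concretely, if $A$ is complete local and non-regular, with $\a = \m$ the maximal ideal and $A_0 = k$ the residue field, then $N := \bigoplus_{q \geq 0} k[-q]$ lies in $\cat{D}_{\mrm{f}}(\cat{Mod} A)$, yet $\mrm{H}^0(A_0 \otimes^{\mrm{L}}_A N) \cong \bigoplus_{q \geq 0} \opn{Tor}^A_q(k,k)$ is not finitely generated. Such an $N$ of course cannot satisfy (ii) --- but that is exactly the point: your claim becomes true only after one knows $N$ is bounded above, which is precisely the boundedness you postpone to the ``moreover'' step, so the verification of condition (ii) of Theorem \ref{thm:cof.1} is circular as written. (The companion claim, torsionness of $\mrm{H}^j(M)$ via the hyper-Ext spectral sequence, also has a convergence issue for unbounded $N$, but that one is repairable by truncating $N$ and using the finite injective dimension of $R^{\mrm{t}}$.)

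Boundedness of $N$ is therefore the crux, and your Koszul argument for it is not yet a proof: writing $D^{\mrm{t}} := \opn{RHom}_A(-, R^{\mrm{t}})$ and $D := \opn{RHom}_A(-, R)$ as you do, note that $K \otimes_A N$ is \emph{not} a complex of $A_0$-modules --- only its cohomologies are killed by $\a$ --- so $D^{\mrm{t}}$ applied to it does not literally ``reduce to the duality of $E$ over $A_0$''; converting that heuristic into an argument that \emph{reflects} boundedness for unbounded complexes is exactly the delicate point you defer. The paper bypasses all of this with a single citation: by \cite[Proposition 2.5.8(c)]{AJL2}, biduality $N \cong D^{\mrm{t}}(D^{\mrm{t}}(N))$ holds for every $N \in \cat{D}_{\mrm{f}}(\cat{Mod} A)$, whence $N \cong D^{\mrm{t}}(M)$, which is bounded because $M$ is bounded and $R^{\mrm{t}}$ has finite injective dimension; then setting $L := D(N)$, Lemma \ref{lem:101} gives $\mrm{R}\Ga_{\a}(L) \cong D^{\mrm{t}}(N) \cong M$. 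This is precisely the fallback you mention in your last sentence; promoted to the main argument, it closes both gaps at once and your proof then coincides with the paper's. As primarily written, however, the direction (ii) $\Rightarrow$ (i) and the ``moreover'' clause are not established.
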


\begin{proof}
We first observe that $R = \mrm{L} \Lambda_{\a}(R^{\mrm{t}})$
is in $\cat{D}_{\mrm{f}}(\cat{Mod} A)^{\mrm{b}}$, and it is a c-dualizing 
complex over $A$ (i.e.\ a dualizing complex in the usual sense).
See \cite[Lemma 2.5.3(a)]{AJL2}.
Let us write 
$D^{\mrm{t}}(-) := \opn{RHom}_A(-, R^{\mrm{t}})$
and 
$D(-) := \opn{RHom}_A(-, R)$.

\medskip \noindent
(i) $\Rightarrow$ (ii): 
We know that $L \cong \mrm{L} \Lambda_{\a}(M)$.
Define $N := D(L) \in  \cat{D}_{\mrm{f}}(\cat{Mod} A)^{\mrm{b}}$.
By \cite[Proposition 2.5.8(a)]{AJL2} the adjunction 
morphism $M \to D^{\mrm{t}}(D^{\mrm{t}}(M))$
is an isomorphism. On the other hand, according to GM duality 
\cite[Theorem 7.12]{PSY1}, there is an isomorphism
\[ D^{\mrm{t}}(M) = \opn{RHom}_A(M, R^{\mrm{t}}) 
\cong \opn{RHom}_A(L, R) = D(L) \cong N . \]
We conclude that 
$M \cong D^{\mrm{t}}(N)$. 

\medskip \noindent
(ii) $\Rightarrow$ (i): 
By \cite[Proposition 2.5.8(c)]{AJL2} the adjunction 
morphism $N \to \lb D^{\mrm{t}}(D^{\mrm{t}}(N))$
is an isomorphism; and this implies that $N$ has bounded cohomology, i.e.\ 
$N \in \cat{D}_{\mrm{f}}(\cat{Mod} A)^{\mrm{b}}$.
Let $L := D(N) \in  \cat{D}_{\mrm{f}}(\cat{Mod} A)^{\mrm{b}}$.
Then, using Lemma \ref{lem:101}, and the isomorphism 
$M \cong D^{\mrm{t}}(D^{\mrm{t}}(M))$, we have
\[ \mrm{R} \Ga_{\a}(L) = \mrm{R} \Ga_{\a} (\opn{RHom}_A(N , R))
\cong \opn{RHom}_A(N , R^{\mrm{t}}) = D^{\mrm{t}}(N) \cong M . \]
\end{proof}

\begin{rem}
In \cite{Ha2}, Hartshorne considered a ring $A$ as in Setup \ref{set:261}, 
which is also regular of finite Krull dimension.
Then the complex $R^{\mrm{t}} := \mrm{R}\Gamma_{\a}(A)$ is a t-dualizing 
complex. In this situation, and in view of Proposition \ref{prop:ha2},
we see that our Theorem \ref{thm:cof.1} coincides with 
\cite[Theorem 5.1]{Ha2} for complexes with bounded cohomology.

It is quite possible that the method of proof in \cite{Ha2} can be extended to 
any ring $A$ that admits a dualizing complex $R$ (and then 
$R^{\mrm{t}} := \mrm{R}\Gamma_{\a}(R)$ is a t-dualizing 
complex). But we did not try to do this. 

Note that our Theorem \ref{thm:cof.1}  does not require $A$ to have a dualizing 
complex, nor even to have finite Krull dimension. 
\end{rem}

For a local ring the category $\cat{D}(\cat{Mod} A)^{\mrm{b}}_{\a \tup{-cof}}$
is actually easy to describe, using Theorem \ref{thm:cof.1}. This was also noted 
by  Hartshorne in \cite{Ha2}.

\begin{exa} \label{exa:260}
Suppose $A$ is local and $\m := \a$ is its maximal ideal. An $A$-module is
called {\em cofinite} if it is artinian. We denote by
$\cat{Mod}_{\tup{cof}} A$
the category of cofinite modules. Let $J(\m)$ be an injective hull of the
residue field $A_0 = A / \m$. Then $J(\m)$ is the only indecomposable injective 
$\m$-torsion $A$-module (up to isomorphism). {\em Matlis duality} \cite{Ma} 
says that
\begin{equation} \label{eqn:26}
\opn{Hom}_A(-, J(\m)) : \cat{Mod}_{\mrm{f}} A \to \cat{Mod}_{\tup{cof}} A
\end{equation}
is a duality (contravariant equivalence).

Let
$M \in \cat{D}( \cat{Mod} A)^{\mrm{b}}_{\m \tup{-tor}}$, and let $M \to I$
be its minimal injective resolution. The bounded below complex of injectives
$I = \bigl( \cdots \to I^0 \to I^1 \to  \cdots \bigr)$
has this structure:
$I^q \cong J(\m)^{\oplus \mu_q}$,
where $\mu_q$ are the {\em Bass numbers}, that in general could
be infinite cardinals. The Bass numbers satisfy the equation
$\mu_q = \opn{rank}_{A_0} (\opn{Ext}^q_A(A_0, M))$.
By Theorem \ref{thm:cof.1} we know that
$M \in \cat{D}(\cat{Mod} A)^{\mrm{b}}_{\m \tup{-cof}}$
if and only if $\mu_q < \infty$ for all $q$. On the other hand, from
(\ref{eqn:26}) we see
that an $\m$-torsion module $M$ has finite Bass numbers if and only if it is
cofinite.  We conclude that cofinite modules are cohomologically cofinite,
and the inclusion
\[ \cat{D}^{\mrm{b}}(\cat{Mod}_{\tup{cof}} A) \to
\cat{D}( \cat{Mod} A)^{\mrm{b}}_{\m \tup{-cof}} \]
is an equivalence.

Note that the module $J(\m)$ is a {\em t-dualizing complex}
over $A$.
\end{exa}



\begin{thebibliography}{EGA IV}

\bibitem[AJL1]{AJL1} L. Alonso, A. Jeremias and J. Lipman,
  Local homology and cohomology on schemes,
  Ann.\ Sci.\ ENS {\bf 30} (1997), 1-39.
  Correction, availabe online at \\
\url{http://www.math.purdue.edu/~lipman/papers/homologyfix.pdf}

\bibitem[AJL2]{AJL2} L. Alonso, A. Jeremias and J. Lipman,
Duality and flat base change on formal schemes.
in: ``Studies in duality on Noetherian formal schemes and non-Noetherian
ordinary schemes'', Contemporary Mathematics, {\bf 244} pp. 3-90. AMS, 1999.
Correction: Proc.\  AMS {\bf 131}, No. 2 (2003), pp. 351-357

\bibitem[CA]{CA} Bourbaki, ``Commutative Algebra'', Chapters 1-7,
	Springer, 1989.

\bibitem[EGA IV]{EGA-IV} A. Grothendieck and J. Dieudonn\'e, 
	``\'El\'ements de G\'eometrie Alg\'ebrique IV'', part one,
	Publ.\ Math.\ IHES {\bf 20} (1964).

\bibitem[Ha1]{Ha1} R. Hartshorne,
Algebraic Geometry, Springer-Verlag, Berlin, 1977.

\bibitem[Ha2]{Ha2} R. Hartshorne,
Affine Duality and Cofiniteness.

\bibitem[Ke]{Ke} B. Keller,
	Deriving DG categories,
	Ann.\ Sci.\ Ecole Norm.\ Sup.\ {\bf 27}, (1994) 63-102.

\bibitem[KS]{KS} M. Kashiwara and P. Schapira,
Deformation quantization modules,
Ast\'erisque {\bf 345} (2012).

\bibitem[Li]{Li} J. Lipman, Lectures on local cohomology and duality, in 
``Local cohomology and its applications'', Lect.\ Notes Pure Appl.\ Math.\ {\bf 
226}, Marcel Dekker, Inc., New York, (2002), 39-89.

\bibitem[Ma]{Ma} E. Matlis,
Injective modules over noetherian rings, Pacific J. Math. {\bf 8} (1958),
511-528.

\bibitem[PSY1]{PSY1} M. Porta, L. Shaul and A. Yekutieli,
On the Homology of Completion and Torsion,
Algebr.\ Represent.\ Theor.\ (published online 2012, DOI 
10.1007/s10468-012-9385-8).

\bibitem[PSY2]{PSY2} M. Porta, L. Shaul and A. Yekutieli,
Completion by Derived Double Centralizer,
Algebr.\ Represent.\ Theor.\ (published online 2013, DOI 
10.1007/s10468-013-9405-3).

\bibitem[RD]{RD} R. Hartshorne, ``Residues and Duality,''
        Lecture Notes in Math.\ \textbf{20},
        Springer-Verlag, Berlin, 1966.

\bibitem[Sp]{Sp} N. Spaltenstein,
Resolutions of unbounded complexes,
Compositio Math.\ {\bf  65} (1988), no. 2, 121-154.

\bibitem[We]{We} C. Weibel,
``An introduction to homological algebra'',
Cambridge Univ. Press, 1994.

\bibitem[Ye1]{Ye1} A. Yekutieli,
Smooth formal embeddings and the residue complex,
Can.\ J. Math.\ {\bf 50} (1998), 863-896.

\bibitem[Ye2]{Ye2} A. Yekutieli,
  On Flatness and Completion for Infinitely Generated Modules over Noetherian
  Rings, Comm.\ Algebra {\bf 39}, Issue 11 (2011), 4221-4245.

\end{thebibliography}
\end{document}